\DeclareMathOperator{\diam}{diam}
\DeclareMathOperator{\Int}{Int}
\DeclareMathOperator{\dist}{dist}
\newcommand{\RR}{\mathbb{R}}
\newcommand{\NN}{\mathbb{N}}
\newtheorem{theorem}{Theorem}[section]
\newtheorem{lemma}{Lemma}[section]
\newtheorem{corollary}{Corollary}[section]
\newenvironment{proof}{\par\noindent {\bf Proof.}}
{\begin{flushright} \vspace*{-6mm}\mbox{$\Box$} \end{flushright}}
\newenvironment{proof2}{\par\noindent {\bf Proof of Theorem 5.1.}}
{\begin{flushright} \vspace*{-6mm}\mbox{$\Box$} \end{flushright}}
\newenvironment{proof3}{\par\noindent {\bf Proof of Theorem 6.1.}}
{\begin{flushright} \vspace*{-6mm}\mbox{$\Box$} \end{flushright}}
\newenvironment{proof4}{\par\noindent {\bf Proof of Theorem 7.1 part 1.}}
{\begin{flushright} \vspace*{-6mm}\mbox{$\Box$} \end{flushright}}
\begin{document}

\begin{titlepage}
\vspace*{\fill}
\begin{center}
\begin{picture}(300,510)
  \put( 00,400){\makebox(0,0)[l]{\Huge  \bf \textsc{Harmonic Hardy spaces}}}
  \put( 30,370){\makebox(0,0)[l]{\Huge  \bf \textsc{on smooth domains}}}
  \put(100,250){\makebox(0,0)[l]{\large     \textsc{Tomasz Luks}}}
  \put(90,-80){\makebox(0,0)[bl]{\large \bf \textsc{Wroclaw 2008}}}
\end{picture}
\end{center}
\vspace*{\fill}
\end{titlepage}

\pagestyle{empty}

\tableofcontents

\newpage

\pagestyle{myheadings}

\section{Introduction}
\markright{Introduction \ \& \ Preliminaries}

\medskip
The harmonic Hardy spaces $h^p$ are certain classes of harmonic functions, usually defined on the unit ball or the upper half-space. Many facts from $h^p$ theory have their source in complex analysis and holomorphic Hardy 
spaces $H^p$. They are named in honor of the mathematician G. H. Hardy, who first studied them.

The objective of this paper is to characterize $h^p$ classes and a boundary behavior of harmonic functions 
on a smooth domain in real Euclidean space. Most of the presented results come from the Elias M. Stein's book \cite{S1}. We will concentrate on supplementing the missing or incomplete proofs; the basis will be the well-known 
theory of $h^p$ spaces and nontangential convergence on the ball or the upper half-space.

\section{Preliminaries}

\medskip
Throughout this paper, we will deal with harmonic functions, defined on open subsets of real Euclidean space $\RR^N$, where $N$ will denote a fixed positive integer greater than 1. Let $\Omega$ be an open subset of $\RR^N$; 
a twice continuously differentiable, complex-valued function $u$ is harmonic on $\Omega$ if
\[
	\Delta u=\frac{\partial^2u}{\partial x^2_1}+...+\frac{\partial^2u}{\partial x^2_N}=0
\]
at every point of $\Omega$. The operator $\Delta$ is called the $Laplacian$, and the equation $\Delta u=0$ is called $Laplace's$ $equation$. We will use some well-known properties of harmonic functions, as the maximum principle or the mean value property, 
without a comment; all this properties may be found in \cite{ABR}. 

$x=(x_1,...,x_N)$ denote a typical point in $\RR^N$, and $|x|=(x^2_1+...+x^2_N)^{1/2}$ is the Euclidean 
norm of $x$. By $\langle\cdot,\cdot\rangle$ we denote the usual Euclidean inner product. Recall, that for every 
$x,y\in\RR^N$  
\[
	|x+y|^2=|x|^2+2\langle x,y\rangle+|y|^2.
\]

All functions in this paper are assumed to be complex valued unless stated otherwise. 
For fixed positive integer $k$, a function $f$ is of class $C^k$ on $\Omega$, if $f$ is $k$ times continuously 
differentiable on $\Omega$; $f$ is of class $C^{\infty}$ on $\Omega$, if $f$ is of class $C^k$ for every $k$. We say, 
that $f$ is of class $C^{1,1}$ on $\Omega$, if $f$ is of class $C^1$, and $\nabla f$ satisfies the Lipschitz condition
\[
	|\nabla f(x)-\nabla f(y)|\leq A|x-y|,\quad\forall x,y\in\Omega,
\]
where $A<\infty$ is a positive constant, and
\[
	\nabla f=\left(\frac{\partial f}{\partial x_1},...,\frac{\partial f}{\partial x_N}\right),
\]
is the gradient of $f$. For fixed positive integers $n,k$, a vector-valued function \\ $F\colon\Omega\to\RR^n$, 
$F=(F_1,...,F_n)$, is of class $C^k$ on $\Omega$, if the component functions $F_i$, $i=1,...,n$, are of class $C^k$ on $\Omega$. 

For $E\subset\RR^N$, $C(E)$ denote the space of continuous functions on $E$. $B(a,r)=\left\{x\in\RR^N:|x-a|<r\right\}$ is 
the open ball centered at $a\in\RR^N$ and radius $r>0$. If the dimension is important, we write $B_N(a,r)$ 
in place of $B(a,r)$. The unit sphere, the boundary of $B(0,1)$, is denoted by $S$.

We will also deal with smooth domains in $\RR^N$; by a smooth domain, we mean a bounded domain 
with the boundary at least of class $C^2$. More precisely, we say that a bounded domain $D\subset\RR^N$ has a boundary of class 
$C^2$, if there exists a real-valued function $\lambda$ defined in a neighborhood of $\overline{D}$ with the following properties:

\medskip

\begin{enumerate}
	\item $\lambda$ is of class $C^{2}$.
	\item $\lambda(x)<0$ if and only if $x\in D$.
	\item $\left\{x:\lambda(x)=0\right\}=\partial D$.
	\item $|\nabla\lambda(x)|>0$ if $x\in\partial D$.
\end{enumerate}
\medskip
Throughout this paper, we will assume that $D$ is a bounded domain in $\RR^N$ with the boundary of class $C^{2}$, 
and a function $\lambda$ of the above type will be called a $characterizing$ $function$ for $D$. 
\medskip

\section{Properties of smooth domains in $\RR^N$}
\markright{Properties of smooth domains in $\RR^N$}

\bigskip
 
In this section we will show some important properties of $D$. 
Let $\lambda$ be a characterizing function for $D$. For $y\in\partial D$ denote 
\[
	\nu_y=\frac{\nabla\lambda(y)}{|\nabla\lambda(y)|},
\]
the outward unit normal vector field to $\partial D$. By the property 1 of $\lambda$ (and mean value theorem), there 
exists a positive constant $c$, such that 
\[
	|\nabla\lambda(x)-\nabla\lambda(y)|\leq c|x-y|
\]	
for every $x,y\in\partial D$; property 4 implies, that there exists positive constant $c'$, 
such that for each $y\in\partial D$
\[
	|\nabla\lambda(y)|\geq c'.
\]
Denoting $c_0=2c/c'$, we conclude

\[
	|\nu_x-\nu_y|=\left|\frac{\nabla\lambda(x)}{|\nabla\lambda(x)|}-\frac{\nabla\lambda(y)}{|\nabla\lambda(y)|}\right|=
	\left|\frac{\nabla\lambda(x)}{|\nabla\lambda(x)|}-\frac{\nabla\lambda(y)}{|\nabla\lambda(x)|}+
	\frac{\nabla\lambda(y)}{|\nabla\lambda(x)|}-\frac{\nabla\lambda(y)}{|\nabla\lambda(y)|}\right|
\]
\\
\[
	\leq\frac{|\nabla\lambda(x)-\nabla\lambda(y)|}{|\nabla\lambda(x)|}+
	|\nabla\lambda(y)|\left|\frac{1}{|\nabla\lambda(x)|}-\frac{1}{|\nabla\lambda(y)|}\right|
\]
\\
\[
	=\frac{|\nabla\lambda(x)-\nabla\lambda(y)|}{|\nabla\lambda(x)|}+
	|\nabla\lambda(y)|\left|\frac{|\nabla\lambda(y)|-|\nabla\lambda(x)|}{|\nabla\lambda(x)||\nabla\lambda(y)|}\right|
\]
\\
\[	
	\leq2\frac{|\nabla\lambda(x)-\nabla\lambda(y)|}{|\nabla\lambda(x)|}\leq c_0|x-y|,
\]
\\
for every $x,y\in\partial D$.

For $y\in\partial D$  and $r>0$ let $K(y,r)=B(y,r)\cap(\partial D)$.

\begin{lemma}
There exist positive constants $\rho$, $M_1$, $M_2$, so that to each point $y\in\partial D$ there corresponds a local 
coordinate system $(\overline{x},z)$, where ${\overline{x}\in\RR^{N-1}}$, ${z\in\RR}$ and every point $x\in K(y,\rho)$ is 
represented as $x=(\overline{x},z)$, and a $C^2$ function $\varphi_{y}\colon B_{N-1}(\overline{y},\rho)\to\RR$, such that
\[
	\left|\frac{\partial\varphi_{y}}{\partial x_i}\right|\leq M_1,\quad
	\left|\frac{\partial^2\varphi_{y}}{\partial x_i\partial x_j}\right|\leq M_2,\quad\forall i,j\in\left\{1,...,N-1\right\},
\]
and
\[
	K(y,\rho)=\left\{(\overline{x},\varphi_{y}(\overline{x})):\overline{x}\in
	B_{N-1}(\overline{y},\rho)\right\}\cap B_N(y,\rho).
\]

\end{lemma}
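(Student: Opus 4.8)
The plan is to use the implicit function theorem at each boundary point, with uniformity of the constants coming from the $C^2$ bounds on $\lambda$ that were established just before the lemma. Fix $y\in\partial D$. Since $|\nabla\lambda(y)|\ge c'>0$, some partial derivative $\partial\lambda/\partial x_k(y)$ has absolute value at least $c'/\sqrt N$; after relabelling coordinates (and possibly reflecting) I may take $k=N$ and $\partial\lambda/\partial x_N(y)>0$. Write the coordinates as $(\overline x,z)$ with $\overline x\in\RR^{N-1}$, $z\in\RR$, so that in these coordinates $y=(\overline y,\varphi_y(\overline y))$ is to be solved from $\lambda(\overline x,z)=0$. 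The classical implicit function theorem then produces, on some ball $B_{N-1}(\overline y,r_y)$, a $C^2$ function $\varphi_y$ with $\lambda(\overline x,\varphi_y(\overline x))=0$ and $\partial D$ locally the graph of $\varphi_y$; moreover $\partial\varphi_y/\partial x_i=-(\partial\lambda/\partial x_i)/(\partial\lambda/\partial z)$, and differentiating once more gives $\partial^2\varphi_y/\partial x_i\partial x_j$ as a rational expression in first and second partials of $\lambda$ evaluated along the graph.

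The key point is to extract $\rho$, $M_1$, $M_2$ \emph{independent of $y$}. First I would fix a compact neighborhood $U$ of $\overline D$ on which $\lambda$ is $C^2$; then $|\nabla\lambda|$, all first partials, and all second partials of $\lambda$ are bounded on $U$, say $|\partial\lambda/\partial x_i|\le L_1$ and $|\partial^2\lambda/\partial x_i\partial x_j|\le L_2$ there, and $|\nabla\lambda|\ge c'$ on $\partial D$. By uniform continuity of $\nabla\lambda$ on $U$ there is a single $\delta>0$ such that whenever $|x-y|<\delta$ with $y\in\partial D$, we still have $\partial\lambda/\partial x_N(x)\ge c'/(2\sqrt N)=:\kappa$ in the coordinate system chosen at $y$ (the choice of which coordinate dominates can be made uniformly by a compactness/finite-cover argument, or one simply notes that on $B(y,\delta)$ the dominant coordinate stays dominant). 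On the slab $\{|x-y|<\delta\}$ the derivative $\partial\lambda/\partial z$ is bounded below by $\kappa$ and $|\nabla_{\overline x}\lambda|\le \sqrt{N-1}\,L_1$, so the graphing function, wherever it is defined and its graph stays in this slab, satisfies $|\partial\varphi_y/\partial x_i|\le L_1/\kappa=:M_1$ and, from the second-derivative formula, $|\partial^2\varphi_y/\partial x_i\partial x_j|\le C(L_2,L_1,\kappa)=:M_2$, with $M_1,M_2$ not depending on $y$.

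It then remains to choose $\rho\in(0,\delta)$ small enough, again uniformly in $y$, that the solution $\varphi_y$ exists on all of $B_{N-1}(\overline y,\rho)$ with its graph contained in $B_N(y,\delta)$, and that the displayed identity $K(y,\rho)=\{(\overline x,\varphi_y(\overline x)):\overline x\in B_{N-1}(\overline y,\rho)\}\cap B_N(y,\rho)$ holds. For the graph to stay in the slab it suffices to take $\rho$ with $\rho\sqrt{1+M_1^2}<\delta$, since the graph over $B_{N-1}(\overline y,\rho)$ lies within distance $\rho\sqrt{1+M_1^2}$ of $y$; the gradient bound $M_1$ guarantees $\varphi_y$ does not escape. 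The inclusion ``$\subseteq$'' of the set identity is the statement that near $y$ the zero set of $\lambda$ coincides with the graph — this is local uniqueness in the implicit function theorem, valid because $\partial\lambda/\partial z\ne 0$ throughout the slab forces $\lambda$ to be strictly monotone in $z$ on each vertical segment, so it vanishes at most once; ``$\supseteq$'' is immediate since graph points satisfy $\lambda=0$ hence lie on $\partial D$, and intersecting with $B_N(y,\rho)$ puts them in $K(y,\rho)$. The main obstacle is precisely this uniformity bookkeeping: making sure that one $\rho$ works simultaneously for every $y\in\partial D$, which I would handle either by the explicit quantitative estimates above (uniform lower bound $\kappa$ on the relevant derivative, uniform upper bounds $L_1,L_2$, then an explicit $\rho$) or, more cheaply, by covering the compact set $\partial D$ by finitely many such neighborhoods and taking the minimum of the finitely many radii and the maximum of the finitely many derivative bounds.
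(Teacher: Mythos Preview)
Your proposal is correct and follows essentially the same route as the paper: apply the implicit function theorem at each $y\in\partial D$ using the coordinate in which $\partial\lambda/\partial x_k$ is bounded away from zero, then obtain uniform bounds $M_1,M_2$ from the explicit formulas for $\partial\varphi_y/\partial x_i$ and $\partial^2\varphi_y/\partial x_i\partial x_j$ in terms of the uniformly bounded first and second partials of $\lambda$ and the uniform lower bound on the distinguished partial. The paper is terser about the uniform choice of $\rho$ (it simply invokes compactness of $\partial D$), whereas you supply the quantitative condition $\rho\sqrt{1+M_1^2}<\delta$ and the monotonicity argument for the set identity $K(y,\rho)=\{\text{graph}\}\cap B_N(y,\rho)$; these additions are correct and in fact fill in details the paper leaves implicit.
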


\begin{proof}
Since $\lambda$ is of class $C^2$ in a neighborhood of $\overline{D}$, there exists $c_1,c_2>0$ such that 
for every $y\in\partial D$ and $i,j\in\left\{1,...,N\right\}$
\[
	\left|\frac{\partial\lambda}{\partial x_i}(y)\right|\leq c_1,\quad
	\left|\frac{\partial^2\lambda}{\partial x_i\partial x_j}(y)\right|\leq c_2.
\]
\\
Moreover, because $\partial D$ is compact and $|\nabla\lambda(x)|\geq c$ on $\partial D$ for some $c>0$, 
there exist $r,c'>0$, such that for every $y\in\partial D$ we can choose $i\in\left\{1,...,N\right\}$ so that 
for $x\in\partial D$ and $|x-y|<r$ we have 
\[
	\left|\frac{\partial\lambda}{\partial x_i}(x)\right|\geq c'.
\]
\\ 
So let $y\in\partial D$ and take $i\in\left\{1,...,N\right\}$ like above; for $x\in K(y,r)$ let 
\[
	\overline{x}=(x_1,...,x_{i-1},x_{i+1},...,x_N),\quad z=x_i,
\]
and denote
\[
	(\overline{x},z)=(x_1,...,x_{i-1},z,x_{i+1},...,x_N).
\]
\\
By implicit function theorem, there exist constants $\rho,\rho'$, $0<\rho<\rho'$, and a $C^1$ function 
$\varphi_y\colon B_{N-1}(\overline{y},\rho')\to\RR$, such that:
\begin{enumerate}
	\item $	K(y,\rho)=\left\{(\overline{x},\varphi_{y}(\overline{x})):\overline{x}\in
	B_{N-1}(\overline{y},\rho')\right\}\cap B_N(y,\rho)$.
	\item For every $\overline{x}\in B_{N-1}(\overline{y},\rho')$, $\lambda(\overline{x},\varphi_y(\overline{x}))=0$ and
	\[
		\frac{\partial\varphi_y}{\partial x_i}(\overline{x})=
		-\frac{\frac{\partial\lambda}{\partial x_i}(\overline{x},\varphi_y(\overline{x}))}
		{\frac{\partial\lambda}{\partial z}(\overline{x},\varphi_y(\overline{x}))},\quad i=1,...,N-1.
	\]
\end{enumerate}

Since $\partial D$ is compact, we may choose $\rho,\rho'$ independently on $y$, and take $\rho'=\rho$. 
Additionally we may assume, that $\rho<r$. Hence
\[
	\left|\frac{\partial\varphi_y}{\partial x_i}(\overline{x})\right|\leq c_1/c'=M_1.	
\]
\\
Moreover, by 2. we conclude, that $\varphi_y$ is of class $C^2$ and
\[
	\frac{\partial^2\varphi_y}{\partial x_j\partial x_i}(\overline{x})=
	\frac{\partial}{\partial x_j}\left(-\frac{\frac{\partial\lambda}{\partial x_i}(\overline{x},\varphi_y(\overline{x}))}
	{\frac{\partial\lambda}{\partial z}(\overline{x},\varphi_y(\overline{x}))}\right)
\]

\[
	=\frac{
	\frac{\partial}{\partial x_j}\left(\frac{\partial\lambda}{\partial z}(\overline{x},\varphi_y(\overline{x}))\right)
	\frac{\partial\lambda}{\partial x_i}(\overline{x},\varphi_y(\overline{x}))
	-\frac{\partial}{\partial x_j}\left(\frac{\partial\lambda}{\partial x_i}(\overline{x},\varphi_y(\overline{x}))\right)
	\frac{\partial\lambda}{\partial z}(\overline{x},\varphi_y(\overline{x}))}
	{\left(\frac{\partial\lambda}{\partial z}(\overline{x},\varphi_y(\overline{x}))\right)^2}
\]

\[
	=\frac{\left(\frac{\partial^2\lambda}{\partial x_j\partial z}(\overline{x},\varphi_y(\overline{x}))+
	\frac{\partial^2\lambda}{\partial z^2}(\overline{x},\varphi_y(\overline{x}))
	\frac{\partial\varphi_y}{\partial x_j}(\overline{x})\right)
	\frac{\partial\lambda}{\partial x_i}(\overline{x},\varphi_y(\overline{x}))}
	{\left(\frac{\partial\lambda}{\partial z}(\overline{x},\varphi_y(\overline{x}))\right)^2}
\]

\[
	-\frac{\left(\frac{\partial^2\lambda}{\partial x_j\partial x_i}(\overline{x},\varphi_y(\overline{x}))+
	\frac{\partial^2\lambda}{\partial z\partial x_i}(\overline{x},\varphi_y(\overline{x}))
	\frac{\partial\varphi_y}{\partial x_j}(\overline{x})\right)
	\frac{\partial\lambda}{\partial z}(\overline{x},\varphi_y(\overline{x}))}
	{\left(\frac{\partial\lambda}{\partial z}(\overline{x},\varphi_y(\overline{x}))\right)^2}.
\]
\\
Therefore
\[
	\left|\frac{\partial^2\varphi_y}{\partial x_j\partial x_i}(\overline{x})\right|\leq 
	2\frac{c_1c_2}{c'}\left(1+\frac{c_1}{c'}\right)=M_2.
\]

\end{proof}

\begin{lemma}(ball condition)\\
There exists $r>0$, such that for each $y\in \partial D$ there are balls $B(c_y,r)$, $B(\tilde{c}_y,r)$ that satisfy \medskip
\begin{enumerate}
	\item $\overline{B}(c_y,r)\cap\overline{D^c}=\left\{y\right\}$.
	\item $\overline{B}(\tilde{c}_y,r)\cap\overline{D}=\left\{y\right\}$.
\end{enumerate}

\end{lemma}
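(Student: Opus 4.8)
The plan is to use the interior ball at a point $y\in\partial D$ built from the local graph representation of Lemma 3.1, together with the uniform $C^2$ bounds $M_1,M_2$ it provides, and then to obtain the exterior ball by the symmetric argument. Fix $y\in\partial D$ and work in the local coordinate system $(\overline{x},z)$ from Lemma 3.1, translating so that $y$ corresponds to the origin and $\overline{y}=0$; after a rotation in the $\overline{x}$-variables we may also assume $\nabla\varphi_y(0)=0$, so the tangent plane at $y$ is $\{z=0\}$ and, say, $D$ lies locally on the side $z>\varphi_y(\overline{x})$ (the other case is handled identically). The key quantitative input is that by Taylor's theorem with the Hessian bound $M_2$,
\[
	|\varphi_y(\overline{x})-\varphi_y(0)-\langle\nabla\varphi_y(0),\overline{x}\rangle|
	=|\varphi_y(\overline{x})|\leq \tfrac{(N-1)M_2}{2}\,|\overline{x}|^2
\]
for all $\overline{x}\in B_{N-1}(0,\rho)$, with $M_2$ independent of $y$.

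Next I would choose the center $c_y=y+r\,\nu_y^{-}$, where $\nu_y^{-}$ is the inward unit normal (in the local coordinates this is the direction of increasing $z$), and show $\overline B(c_y,r)\subset \overline D$ once $r$ is small enough. A point of $\overline B(c_y,r)$ near $y$ has coordinates $(\overline{x},z)$ with $|\overline{x}|^2+(z-r)^2\leq r^2$, i.e. $z\geq r-\sqrt{r^2-|\overline{x}|^2}\geq |\overline{x}|^2/(2r)$; comparing with the bound $\varphi_y(\overline{x})\leq \frac{(N-1)M_2}{2}|\overline{x}|^2$ shows that as soon as $1/(2r)>\frac{(N-1)M_2}{2}$, i.e. $r<1/((N-1)M_2)$, every such point satisfies $z>\varphi_y(\overline{x})$ and hence lies in $D$, with equality with $\partial D$ only at $\overline{x}=0$, $z=0$, that is only at $y$. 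I also need $\overline B(c_y,r)$ to stay inside the coordinate chart $B_N(y,\rho)$, which forces a further constraint $r\leq \rho/(\text{const})$; since $\rho,M_1,M_2$ are uniform over $\partial D$ by Lemma 3.1, the resulting $r$ can be chosen independently of $y$. The exterior ball $B(\tilde c_y,r)$ is obtained by the same computation with $\tilde c_y=y+r\,\nu_y$ on the side $z<\varphi_y(\overline{x})$, using that $\varphi_y(\overline{x})\geq -\frac{(N-1)M_2}{2}|\overline{x}|^2$.

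The main obstacle is twofold and both parts are about uniformity and global-versus-local control. First, the inequalities above only show $\overline B(c_y,r)$ avoids $\partial D$ \emph{within the chart}; I must separately rule out that the ball meets $\partial D$ (or $D^c$) far from $y$. This follows from compactness: by the graph description, points of $\partial D$ outside $B_N(y,\rho)$ are at distance at least some $\delta_0>0$ from $y$ (uniformly in $y$, again by compactness of $\partial D$ and the uniform chart size), so choosing $r<\delta_0$ in addition keeps the whole ball away from the far part of the boundary. Second, I need $\nabla\varphi_y(0)=0$ after the rotation to be legitimate and consistent with the direction $\nu_y$ — this is just the standard fact that the graph's tangent plane at $y$ is orthogonal to $\nabla\lambda(y)$, hence to $\nu_y$, so choosing coordinates aligned with $\nu_y$ is harmless. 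Collecting the three smallness requirements $r<1/((N-1)M_2)$, $r\leq c\rho$, $r<\delta_0$ and taking the minimum yields a single $r>0$ valid for every $y\in\partial D$, completing the proof.
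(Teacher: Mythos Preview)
Your argument is essentially the same as the paper's: both rotate coordinates so that the $z$-axis is along $\nu_y$ (hence $\nabla\varphi_y(0)=0$), bound $|\varphi_y(\overline{x})|\le C|\overline{x}|^2$ from the uniform second-order estimates of Lemma~3.1, compare with the sphere profile $r-\sqrt{r^2-|\overline{x}|^2}\ge|\overline{x}|^2/(2r)$, and choose $r$ small in terms of $C$ and $\rho$; you are in fact more explicit than the paper about why the ball cannot meet $\partial D$ outside the chart. One slip to fix: a rotation \emph{in the $\overline{x}$-variables alone} preserves $|\nabla\varphi_y(0)|$ and cannot force it to vanish---what you need (and correctly describe in your last paragraph) is a rotation of the full $(\overline{x},z)$ frame aligning the $z$-axis with $\nu_y$.
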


\begin{proof}
By Lemma 3.1, there exist positive constants $\rho$ and $M$, such that to each point $y\in\partial D$ there corresponds a local 
coordinate system $(\overline{x},z)$, where $\overline{x}\in\RR^{N-1}$, $z\in\RR$ and every point $x\in K(y,\rho)$ is 
represented as $x=(\overline{x},z)$, and a $C^2$ function $\varphi_{y}\colon B_{N-1}(\overline{y},\rho)\to\RR$, such that 
\[
	\left|\frac{\partial^2\varphi_{y}}{\partial x_i\partial x_j}\right|\leq M,\quad\forall i,j\in\left\{1,...,N-1\right\},
\]
and
\[
	K(y,\rho)=\left\{(\overline{x},\varphi_y(\overline{x})):\overline{x}\in
	B_{N-1}(\overline{y},\rho)\right\}\cap B_N(y,\rho).
\]
\\
By the mean value theorem, for some $M'>0$, every $y\in\partial D$ and every $x\in K(y,\rho)$ we have
\[
	|\nabla\varphi_{y}(\overline{x})-\nabla\varphi_{y}(\overline{x}')|\leq M'|\overline{x}-\overline{x}'|.
\]
We show, that there exists positive constant $r=r(M',\rho)$, such that for every $y\in\partial D$, $B(y-r\nu_{y},r)\subset D$ and $B(y+r\nu_{y},r)\subset \RR^N\backslash D$.  Let $y\in\partial D$; without loss of generality we may assume that $y=0$ and 
$z=x_N$ (in the local coordinate system near 0). Moreover, we may assume (by a rotation of the coordinate system if necessary), 
that $\nu_0=e_N=(0,...,0,1)$. This implies, that $\nabla\varphi_0(0)=0$; therefore 
$|\nabla\varphi_0(\overline{x})|\leq M'|\overline{x}|$ if $|\overline{x}|<\rho$. Moreover, the Taylor expansion gives 
$\varphi_0(\overline{x})=\langle\nabla\varphi_0(\theta\overline{x}),\overline{x}\rangle$ for some $\theta\in(0,1)$, and hence

\[
	|\varphi_0(\overline{x})|\leq|\nabla \varphi_0(\theta\overline{x})|\cdot|\overline{x}|\leq 
	M'\theta|\overline{x}|^2\leq M'|\overline{x}|^2.
\]
\\
Now observe, that $S(re_N,r)$, the sphere with center in $(0,...,0,r)$ and radius $r$, touches the hyperplane 
$\left\{x_N=0\right\}$ at the origin and the lower hemisphere is represented as 

\[
	x_N=\psi_r(\overline{x})=r-\sqrt{r^2-|\overline{x}|^2}=
	r\left(1-\sqrt{1-\left(\frac{|\overline{x}|}{r}\right)^2}\right).
\]
\\
Since $\sqrt{1-t}\leq1-t/2$ for $0\leq t\leq 1$, it follows that for $|\overline{x}|<r$

\[
	\psi_r(\overline{x})\geq\frac{|\overline{x}|^2}{2r}.
\]
\\
So take $r<\min\left\{1/2M',\rho/2\right\}$; for $|\overline{x}|<r$ we have  

\[
	-\psi_r(\overline{x})\leq\varphi_0(\overline{x})\leq\psi_r(\overline{x}),
\]
\\
where $-\psi_r$ represents the upper hemisphere of $S(-re_N,r)$. Because $r<\rho/2$, we conclude, that 

\[
	B(-re_N,r)\subset D,\quad B(re_N,r)\subset\RR^N\backslash D.
\]

\end{proof}

\noindent Obviously, in the lemma above we take $c_y=y-r\nu_y$ and $\tilde{c}_y=y+r\nu_y$. 

Observe, that Lemma 3.2 may be proved without the assertion, that the functions $\varphi_y$ from 
Lemma 3.1 are of class $C^2$ with uniformly bounded derivatives. In fact, it suffices to assume, that for every $y\in\partial D$, 
$\nabla\varphi_y$ satisfies the Lipschitz condition with a constant, which does not depend on $y$. This property characterize the 
domains with the boundary of class $C^{1,1}$. More precisely, we say that a bounded domain $\Omega\subset\RR^N$ has a boundary of class $C^{1,1}$, if there exist positive constants $\rho$, $A$, such that to each point $y\in\partial\Omega$ there 
corresponds a local coordinate system $(\overline{x},z)$, where $\overline{x}\in\RR^{N-1}$, $z\in\RR$ and every point 
${x\in(\partial\Omega)\cap B_N(y,\rho)}$ is represented as $x=(\overline{x},z)$, and a $C^1$ function 
$\varphi_{y}\colon B_{N-1}(\overline{y},\rho)\to\RR$, such that
\[
	|\nabla\varphi_{y}(\overline{x})-\nabla\varphi_{y}(\overline{x}')|\leq A|\overline{x}-\overline{x}'|
\]
and
\[
	(\partial\Omega)\cap B_N(y,\rho)=\left\{(\overline{x},\varphi_y(\overline{x})):\overline{x}\in
	B_{N-1}(\overline{y},\rho)\right\}\cap B_N(y,\rho).
\]
\\
Moreover, it is possible to prove the reverse assertion to Lemma 3.2: if a bounded domain $\Omega\subset\RR^N$ satisfies the ball 
condition, then $\partial\Omega$ is of class $C^{1,1}$ (for the proof, see \cite{AKSZ}).

In the next part of this paper, by $\sigma$ we will denote the area measure on $\partial D$. For $y\in\partial D$, let 
$\rho>0$ and $\varphi_y$ be as in Lemma 3.1. Then, by definition of $\sigma$, for an open set $E\subset K(y,\rho)$ 
we have 
\[
	\sigma(E)=\int_{\left\{\overline{x}:x\in E\right\}}\sqrt{1+|\nabla\varphi(\overline{x})|^2}d\overline{x},
\]
where $\overline{x}$ means the projection of $x$ into $\RR^{k-1}\times\left\{0\right\}\times\RR^{N-k}$, for some 
\\$k\in\left\{1,...,N\right\}$ (for more details, see \cite{SP}).

\begin{lemma}
There exist positive constants $c_1$,$c_2$, such that for each $r>0$,\\ $r\leq\diam(D)$, and each $y\in\partial D$ 
we have:   

\[
	c_1r^{N-1}\leq \sigma\left\{K(y,r)\right\}\leq c_2r^{N-1}.
\]
\end{lemma}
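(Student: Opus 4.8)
The plan is to split the estimate into two regimes according to the size of $r$, the dividing line being the radius $\rho$ furnished by Lemma 3.1; we may clearly assume $\rho\leq\diam(D)$, since otherwise the first regime below already accounts for every $r\leq\diam(D)$.

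\emph{Regime $r\leq\rho$.} Fix $y\in\partial D$ and work in the local coordinates $(\overline{x},z)$ and with the function $\varphi_y\colon B_{N-1}(\overline{y},\rho)\to\RR$ of Lemma 3.1. The graph $G=\left\{(\overline{x},\varphi_y(\overline{x})):\overline{x}\in B_{N-1}(\overline{y},\rho)\right\}$ is contained in $\partial D$, so for $r\leq\rho$ one has $K(y,r)=(\partial D)\cap B_N(y,r)=G\cap B_N(y,r)$. Let $\Pi(r)=\left\{\overline{x}:(\overline{x},\varphi_y(\overline{x}))\in K(y,r)\right\}$ denote the projection of $K(y,r)$ to $\RR^{N-1}$. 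The area formula recalled just before the lemma gives
\[
	\sigma(K(y,r))=\int_{\Pi(r)}\sqrt{1+|\nabla\varphi_y(\overline{x})|^2}\,d\overline{x},
\]
and since $1\leq\sqrt{1+|\nabla\varphi_y|^2}\leq\sqrt{1+(N-1)M_1^2}$ on $B_{N-1}(\overline{y},\rho)$ by the gradient bound in Lemma 3.1, $\sigma(K(y,r))$ is comparable, with constants depending only on $N$ and $M_1$, to the $(N-1)$-dimensional Lebesgue measure $|\Pi(r)|$. Thus it suffices to show that $|\Pi(r)|$ is comparable to $r^{N-1}$.

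For the upper bound, $x\in B_N(y,r)$ forces $|\overline{x}-\overline{y}|<r$, so $\Pi(r)\subseteq B_{N-1}(\overline{y},r)$ and $|\Pi(r)|\leq\omega_{N-1}r^{N-1}$, where $\omega_{N-1}$ denotes the volume of the unit ball in $\RR^{N-1}$. For the lower bound, set $\delta=(1+(N-1)M_1^2)^{-1/2}$; if $|\overline{x}-\overline{y}|<\delta r$ then, by the mean value theorem together with the gradient bound, $|\varphi_y(\overline{x})-\varphi_y(\overline{y})|\leq\sqrt{N-1}\,M_1|\overline{x}-\overline{y}|$, so that
\[
	|x-y|^2=|\overline{x}-\overline{y}|^2+|\varphi_y(\overline{x})-\varphi_y(\overline{y})|^2<(1+(N-1)M_1^2)\,\delta^2r^2=r^2;
\]
since also $\delta r\leq r\leq\rho$, the point $(\overline{x},\varphi_y(\overline{x}))$ lies in $K(y,r)$. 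Hence $B_{N-1}(\overline{y},\delta r)\subseteq\Pi(r)$ and $|\Pi(r)|\geq\omega_{N-1}\delta^{N-1}r^{N-1}$. This settles the claim for $r\leq\rho$, with constants independent of $y$.

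\emph{Regime $\rho<r\leq\diam(D)$.} Compactness of $\partial D$ gives $0<\sigma(\partial D)<\infty$. For the upper bound, $\sigma(K(y,r))\leq\sigma(\partial D)\leq\sigma(\partial D)\rho^{-(N-1)}r^{N-1}$, because $r>\rho$. For the lower bound, $K(y,r)\supseteq K(y,\rho)$, so the previous regime yields a constant $c=c(N,M_1)>0$ with $\sigma(K(y,r))\geq c\rho^{N-1}\geq c\rho^{N-1}(\diam D)^{-(N-1)}r^{N-1}$, using $r\leq\diam(D)$. Taking $c_1$ and $c_2$ to be, respectively, the smallest and the largest of the constants obtained in the two regimes completes the proof. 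The only step that is not mere bookkeeping is the lower bound on $|\Pi(r)|$: one has to use the uniform gradient bound $M_1$ to control how far the graph of $\varphi_y$ can tilt away from the tangent hyperplane at $y$, so that a full $(N-1)$-ball of radius proportional to $r$ still projects into $K(y,r)$; it is also worth noting that, although the charts of Lemma 3.1 vary with $y$, the constants $\rho$ and $M_1$ do not.
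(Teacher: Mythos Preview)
Your proof is correct and follows essentially the same approach as the paper: both split into the regimes $r\leq\rho$ and $r>\rho$, use the area formula together with the uniform gradient bound from Lemma~3.1 to sandwich $K(y,r)$ between graphs over two $(N-1)$-balls of radii proportional to $r$, and then handle large $r$ via $\sigma(\partial D)<\infty$ and the small-$r$ estimate at $r=\rho$. The only cosmetic differences are your explicit use of the projection set $\Pi(r)$ and the constant $\sqrt{N-1}\,M_1$ in place of the paper's $\widetilde{M}=\sup|\nabla\varphi_y|$.
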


\begin{proof}
By Lemma 3.1, there exist positive constants $\rho$ and $M$, such that to each point $y\in\partial D$ there corresponds a local 
coordinate system $(\overline{x},z)$, where $\overline{x}\in\RR^{N-1}$, $z\in\RR$ and every point $x\in K(y,\rho)$ is 
represented as $x=(\overline{x},z)$, and a $C^2$ function $\varphi_{y}\colon B_{N-1}(\overline{y},\rho)\to\RR$, such that 
\[
	\left|\frac{\partial\varphi_{y}}{\partial x_i}\right|\leq M,\quad i=1,...,N-1,
\]
and
\[
	K(y,\rho)=\left\{(\overline{x},\varphi_{y}(\overline{x})):\overline{x}\in B_{N-1}(\overline{y},\rho)\right\}
	\cap B_N(y,\rho).
\]
\\
By first inequality,
\[
	|\varphi_{y}(\overline{x})-\varphi_{y}(\overline{x}')|\leq \sup|\nabla\varphi_y|\cdot|\overline{x}-\overline{x}'|
	\leq \widetilde{M}|\overline{x}-\overline{x}'|.
\]
\\
Let $r\leq\rho$. If $x\in\left\{(\overline{x},\varphi_{y}(\overline{x})):
\overline{x}\in B_{N-1}\left(\overline{y},r/\sqrt{1+\widetilde{M}^2}\right)\right\}$, then

\[
	|x-y|=\left|(\overline{x}-\overline{y},\varphi_{y}(\overline{x})-\varphi_{y}(\overline{y}))\right|=
	\sqrt{|\overline{x}-\overline{y}|^2+|\varphi_{y}(\overline{x})-\varphi_{y}(\overline{y})|^2}
\]

\[
	\leq\sqrt{1+\widetilde{M}^2}|\overline{x}-\overline{y}|<r,
\]
which means, that $x\in K(y,r)$, and hence

\[
	\sigma\left\{K(y,r)\right\}\geq\sigma\left(\left\{(\overline{x},\varphi_{y}(\overline{x})):
	\overline{x}\in B_{N-1}\left(\overline{y},r/\sqrt{1+\widetilde{M}^2}\right)\right\}\right)
\]

\[
	=\int_{B_{N-1}\left(\overline{y},r/\sqrt{1+\widetilde{M}^2}\right)}\sqrt{1+|\nabla\varphi_y(\overline{x})|^2}d\overline{x}
\]

\[
	\geq m_{N-1}\left\{B_{N-1}\left(\overline{y},r/\sqrt{1+\widetilde{M}^2}\right)\right\}=c\cdot r^{N-1},
\]
\\
where $m_{N-1}$ denotes the $N-1$ dimensional Lebesque measure. Obviously \\ 
$K(y,r)\subset\left\{(\overline{x},\varphi_{y}(\overline{x})):\overline{x}\in B_{N-1}(\overline{y},r)\right\}$. 
Therefore

\[
	\sigma\left\{K(y,r)\right\}\leq
	\int_{B_{N-1}(\overline{y},r)}\sqrt{1+|\nabla\varphi_y(\overline{x})|^2}d\overline{x}
\]

\[	
	\leq\sqrt{1+\widetilde{M}^2}m_{N-1}(B_{N-1}(\overline{y},r))=c'\cdot r^{N-1}.
\]
\\
So the estimate is proved for $r\leq\rho$. Since $\partial D$ is compact, we conclude that $\sigma(\partial D)<\infty$. 
Now suppose $r>\rho$. We have
\[
	\sigma\left\{K(y,r)\right\}\geq \sigma\left\{K(y,\rho)\right\}\geq c\cdot\rho^{N-1}=
	c\cdot\left(\frac{\rho}{r}\right)^{N-1}r^{N-1}
\]

\[
	\geq c\cdot\left(\frac{\rho}{\diam(D)}\right)^{N-1}r^{N-1}=c_1r^{N-1},
\]

\[
	\sigma\left\{K(y,r)\right\}\leq\sigma(\partial D)\leq\frac{\sigma(\partial D)}{\rho^{N-1}}r^{N-1}
	\leq c_2r^{N-1},
\]
\\
where $c_2=\max\left\{c',\sigma(\partial D)/\rho^{N-1}\right\}$.
\\
\end{proof}

Denote $\delta(x)=\dist(x,\partial D)$, the distance of $x$ to the boundary of $D$. Observe, that 
for every $x,x'\in\RR^N$ we have 

\[
	\delta(x)\leq |x-x'|+\delta(x').
\]
By symmetry
\[
	\delta(x')\leq |x-x'|+\delta(x),
\]
and thus
\[
	\delta(x')-|x-x'|\leq \delta(x)\leq |x-x'|+\delta(x')
\]

\[
	-|x-x'|\leq \delta(x)-\delta(x')\leq |x-x'| 
\]

\[
	|\delta(x)-\delta(x')|\leq |x-x'|.
\]
\\
In particular, $\delta$ is continuous function on $\RR^N$. 
For $r>0$, denote 
\[
	D_{r}=\left\{x\in\overline{D}:\delta(x)\leq r\right\}.
\]

\begin{lemma}
Let $r$ be the constant from Lemma 3.2, and let $r_0=r/4$. There exists a map $\pi\colon D_r\to\partial D$ such that 
for every $x\in D_r$ we have
\[
	|\pi(x)-x|=\delta(x)
\]
and 
\[
	|\pi(x)-\pi(y)|\leq 4|x-y|
\]
for every $x,y\in D_{r_0}$.

\end{lemma}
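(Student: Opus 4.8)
The plan is to construct $\pi$ via the nearest-point projection onto $\partial D$ and then to extract the Lipschitz bound from the interior ball condition of Lemma 3.2. First I would define $\pi(x)$: for $x\in D_r$, since $\partial D$ is compact, there is at least one point of $\partial D$ realizing the distance $\delta(x)$; call one such point $\pi(x)$. The equality $|\pi(x)-x|=\delta(x)$ is then immediate from the definition. The only subtlety at this stage is well-definedness (uniqueness of the nearest point), but note the statement does not actually require $\pi$ to be single-valued globally — it suffices to make a measurable selection — so I would not dwell on uniqueness beyond what the ball condition gives for free on $D_{r_0}$.

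The heart of the argument is the Lipschitz estimate. Fix $x,y\in D_{r_0}$ and set $p=\pi(x)$, $q=\pi(y)$. The key geometric fact is that by Lemma 3.2, the ball $B(c_p,r)=B(p-r\nu_p,r)$ lies inside $D$ and its closure meets $\overline{D^c}$ only at $p$; consequently $x$, lying at distance $\delta(x)\le r_0=r/4$ from $\partial D$ along the segment from $p$, sits on the inward normal: precisely $x=p-\delta(x)\nu_p$, because the nearest boundary point of an interior ball-touching configuration forces the segment $[x,p]$ to be normal to $\partial D$ at $p$. (This is the standard fact that at a nearest point the difference vector is parallel to the normal; it follows since $B(x,\delta(x))\subset D$ is internally tangent to the larger interior ball $B(c_p,r)$ at $p$.) Hence $x$ lies on the ray from $c_p$ through $p$, and similarly $y$ lies on the ray from $c_q$ through $q$. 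Now I would compare $|x-y|$ with $|p-q|$ by writing $x - y = (p-q) - \delta(x)\nu_p + \delta(y)\nu_q$ and using the curvature-type bound $|\nu_p - \nu_q|\le c_0|p-q|$ established just before Lemma 3.1.

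More carefully: I expect to show $|x-y|\ge \tfrac12|p-q|$, which gives the claim with constant $2$, hence certainly $\le 4$. Since $x$ lies in the closed ball $\overline{B}(\tilde c_q,r)^c$... — rather, the cleanest route is: because $\overline{B}(c_p,r)\cap\overline{D^c}=\{p\}$ and $y\in\overline D$, we get $|y-c_p|\ge r$, i.e. $y$ is outside the open interior ball at $p$; combining the two such inequalities (one centered at $c_p$, one at $c_q$) with $x=p-\delta(x)\nu_p$, $y=q-\delta(y)\nu_q$, $\delta(x),\delta(y)\le r/4$, and expanding $|y-c_p|^2\ge r^2$ using $|u+v|^2=|u|^2+2\langle u,v\rangle+|v|^2$, one isolates a lower bound for $\langle p-q,\nu_p\rangle$ (and symmetrically $\langle q-p,\nu_q\rangle$) in terms of $|p-q|^2$. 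Adding these and using $|\nu_p-\nu_q|\le c_0|p-q|$ yields $|p-q|^2\le C|x-y|\,|p-q|$ for an absolute constant $C$ depending only on $r$ and $c_0$; after possibly shrinking $r_0$ (which we are free to do, or absorb into the constant $4$ as the statement allows), this gives $|p-q|\le 4|x-y|$.

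The main obstacle I anticipate is the bookkeeping in the last step — verifying that the constant genuinely comes out $\le 4$ (or that one may shrink $r_0$ to force it) rather than some larger $r$- and $c_0$-dependent number, and handling the degenerate case $p=q$ (where $x-y=(\delta(y)-\delta(x))\nu_p$ and the bound is trivial since $|\delta(x)-\delta(y)|\le|x-y|$). A secondary point to be careful about is justifying $x=p-\delta(x)\nu_p$ rigorously: this uses that $\overline B(c_p,r)\subset\overline D$ touches $\partial D$ at $p$ from inside, so the outward normal to $\partial D$ at $p$ coincides with the outward normal to that sphere, namely $(p-c_p)/r=\nu_p$; and the nearest point property forces $x$ on the open segment $(c_p,p)$ extended, hence $x=p-\delta(x)\nu_p$. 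Everything else is routine estimation with the already-established curvature bound and Lemma 3.2.
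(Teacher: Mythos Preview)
Your overall strategy---use the interior ball of Lemma~3.2 to constrain the inner products $\langle p-q,\nu_p\rangle$ and $\langle p-q,\nu_q\rangle$, then expand $|x-y|^2$---is sound, but the execution contains a concrete error and an unnecessary detour that together leave a real gap. The error is the inequality ``$y\in\overline D$, hence $|y-c_p|\ge r$'': since $B(c_p,r)\subset D$, a point of $\overline D$ may perfectly well lie \emph{inside} that ball (indeed $x$ itself does). The correct observation is that $q=\pi(y)\in\partial D$ lies outside $B(c_p,r)$, giving $|q-c_p|\ge r$; symmetrically $|p-c_q|\ge r$. Expanding these yields $\langle p-q,\nu_p\rangle\le |p-q|^2/(2r)$ and $\langle p-q,\nu_q\rangle\ge -|p-q|^2/(2r)$. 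Now write $x-y=(p-q)-\delta(x)\nu_p+\delta(y)\nu_q$ and compute
\[
|x-y|^2\;\ge\;|p-q|^2-2\delta(x)\langle p-q,\nu_p\rangle+2\delta(y)\langle p-q,\nu_q\rangle
\;\ge\;|p-q|^2\Bigl(1-\tfrac{\delta(x)+\delta(y)}{r}\Bigr)\;\ge\;\tfrac12|p-q|^2,
\]
using $\delta(x),\delta(y)\le r_0=r/4$. This gives $|\pi(x)-\pi(y)|\le\sqrt2\,|x-y|$, better than required. Crucially, the curvature bound $|\nu_p-\nu_q|\le c_0|p-q|$ is never needed: your plan to invoke it is what produced a constant depending on $c_0$ and the suggestion that one might have to ``shrink $r_0$''---but $r_0=r/4$ and the constant $4$ are fixed by the statement, so that escape route is not available.

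For comparison, the paper's proof is organized quite differently: it splits into the cases $|x-y|\ge\delta(x)$ (handled by the triangle inequality and $|\delta(x)-\delta(y)|\le|x-y|$, giving the constant $4$) and $|x-y|<\delta(x)$ (handled by a geometric argument with an auxiliary foot point $x'$ on the normal segment through $x$, using interior balls of radii $2\delta(y)$ and $2\delta(x')$, and yielding constant $2$). Your corrected approach is more direct and avoids the case split, at the cost of requiring the normal representation $x=p-\delta(x)\nu_p$ up front; the paper's route makes the source of the constant $4$ (the ``far'' case) completely transparent.
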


\begin{proof}
By Lemma 3.2, there exists a unique map $\pi$ on $D_r$, that satisfies 
\[
	|\pi(x)-x|=\delta(x).
\] 
In fact, since $\partial D$ is compact, for every $x\in D_r$ there exists $x'\in\partial D$, such that $|x-x'|=\delta(x)$. 
Because $0<\delta(x)\leq r$, $B(x,\delta(x))\cap\partial D=\left\{x'\right\}$ by Lemma 3.2, and we set $\pi(x)=x'$. 

Now choose $x,y\in D_{r_0}$ and suppose $|x-y|<\delta(x)$. Observe, that $\pi(x)=\pi(z)$ for each $z$ from the set
\[
	I=\left\{\pi(x)+t\frac{x-\pi(x)}{\delta(x)}:t\in[0,2\delta(x)]\right\}.
\]
Because $I\subset\overline{B}(x,\delta(x))$, there exists $x'\in I$, such that $|y-x'|=\dist(y,I)$ and 
$\langle y-x',x'-\pi(x)\rangle=0$. Obviously $\delta(x')=|x'-\pi(x)|<2r_0$; by Lemma 3.2
\[
	B(\pi(y)-2(\pi(y)-y),2\delta(y))\subset D
\]
and
\[
	B(\pi(x)-2(\pi(x)-x'),2\delta(x'))\subset D.
\]
Hence we have 
\\
\begin{enumerate}
	\item $2|\pi(y)-y|\leq |\pi(y)-2(\pi(y)-y)-\pi(x)|$
	\\
	\item $2|\pi(x)-x'|-|y-\pi(x)|\leq  |\pi(y)-y| $.
	\\
\end{enumerate}
From 1 we conclude
	
\[
	4|\pi(y)-y|^2\leq |y-\pi(y)+y-\pi(x)|^2
\]

\[
	4|\pi(y)-y|^2\leq |y-\pi(y)|^2+2\langle y-\pi(y),y-\pi(x)\rangle+|y-\pi(x)|^2
\]

\[
	|\pi(y)-y|^2-2\langle y-\pi(y),y-\pi(x)\rangle\leq |y-\pi(x)|^2-2|y-\pi(y)|^2
\]

\[
	|\pi(y)-y|^2-2\langle y-\pi(y),y-\pi(x)\rangle+|y-\pi(x)|^2\leq 2|y-\pi(x)|^2-2|y-\pi(y)|^2
\]

\[
	|\pi(x)-\pi(y)|^2\leq 2|y-\pi(x)|^2-2|y-\pi(y)|^2.
\]
2 gives
\[
	|\pi(x)-\pi(y)|^2\leq 2|y-\pi(x)|^2-2(2|\pi(x)-x'|-|y-\pi(x)|)^2
\]

\[
	=8\delta(x')(|y-\pi(x)|-\delta(x'))=\frac{8\delta(x')(|y-\pi(x)|^2-\delta(x')^2)}{|y-\pi(x)|+\delta(x')}.
\]
\\
Because $\langle y-x',x'-\pi(x)\rangle=0$, we have $|y-\pi(x)|^2=|x'-y|^2+\delta(x')^2$, and hence 

\[
	\frac{8\delta(x')(|y-\pi(x)|^2-\delta(x')^2)}{|y-\pi(x)|+\delta(x')}\leq
	\frac{8\delta(x')(|y-\pi(x)|^2-\delta(x')^2)}{2\delta(x')}
\]

\[
	=4(|x'-y|^2+\delta(x')^2-\delta(x')^2)=4|x'-y|^2.
\]
\\
Obviously $|x'-y|\leq |x-y|$, and thus $|\pi(x)-\pi(y)|\leq 2|x-y|$.

Now if $|x-y|\geq\delta(x)$, then we have
 
\[
	|\pi(x)-\pi(y)|\leq |\pi(x)-x|+|x-y|+|y-\pi(y)|=\delta(x)+|x-y|+\delta(y)
\]

\[
	=2\delta(x)+|x-y|+\delta(y)-\delta(x)\leq 3|x-y|+|\delta(y)-\delta(x)|
\]

\[
	\leq 4|x-y|,
\]
\\
what gives the conclusion of the lemma.

\end{proof}

The map $\pi$ from Lemma 3.4 will be called an $orthogonal$ $projection$. 
Observe, that if $r$ is the constant from Lemma 3.2, then for $x\in D_r$ we have 
\[
	x=\pi(x)-\delta(x)\nu_{\pi(x)}. 
\]

\begin{lemma}
Let $r$ be the constant from Lemma 3.2, and let $r_0=r/4$. Then the function $\delta$ is of class $C^{1,1}$ inside $D_{r_0}$. 
Moreover, for $a\in S$ we have

\[
	\lim_{h\rightarrow 0}\frac{\delta(x+ha)-\delta(x)}{h}=\langle a,-\nu_{\pi(x)}\rangle.
\]
 
\end{lemma}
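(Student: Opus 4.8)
The plan is to fix $x$ in the interior of $D_{r_0}$ (so $0<\delta(x)<r_0=r/4$), show that $\delta$ is differentiable at $x$ with $\nabla\delta(x)=-\nu_{\pi(x)}$, and then read off the $C^{1,1}$ property from the Lipschitz continuity of $x\mapsto-\nu_{\pi(x)}$. Recall from the remark after Lemma 3.4 that $x=\pi(x)-\delta(x)\nu_{\pi(x)}$, and let $c=\pi(x)-r\nu_{\pi(x)}$, $\tilde c=\pi(x)+r\nu_{\pi(x)}$ be the centres of the two balls from Lemma 3.2, so $B(c,r)\subset D$ and $\overline{B}(\tilde c,r)\subset\RR^N\setminus D$. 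From $B(c,r)\subset D$ one gets, for every $w\in\RR^N$, the lower bound $\delta(w)\geq r-|w-c|$: indeed every point of $\partial D$ is at distance $\geq r$ from $c$, so the triangle inequality gives $|w-p|\geq r-|w-c|$ for all $p\in\partial D$. From $\overline{B}(\tilde c,r)\subset\RR^N\setminus D$ one gets, for $w\in D$, the upper bound $\delta(w)=\dist(w,\RR^N\setminus D)\leq|w-\tilde c|-r$, since the radial projection of $w$ onto the sphere $\{|z-\tilde c|=r\}$ lies in $\RR^N\setminus D$ (here I use that for $w\in D$ the nearest point of the closed set $\RR^N\setminus D$ lies on $\partial D$, hence $\dist(w,\RR^N\setminus D)=\delta(w)$).

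Next I would take $a\in S$ and $|h|<\delta(x)$, so that the segment from $x$ to $x+ha$ lies in $B(x,\delta(x))\subset D$ and hence $x+ha\in D$. Writing $x+ha-c=(r-\delta(x))\nu_{\pi(x)}+ha$ and $x+ha-\tilde c=-(r+\delta(x))\nu_{\pi(x)}+ha$ and using $\sqrt{|u|^2+2h\langle u,a\rangle+h^2}=|u|+h\langle u/|u|,a\rangle+O(h^2/|u|)$, with $|u|\geq r-r_0=3r/4$ in the first case and $|u|\geq r$ in the second, both remainders are bounded by $Ch^2$ with $C=C(r)$ independent of $x$. Substituting into the two one-sided bounds yields
$\delta(x)-h\langle\nu_{\pi(x)},a\rangle-Ch^2\le\delta(x+ha)\le\delta(x)-h\langle\nu_{\pi(x)},a\rangle+Ch^2$,
so that $|\delta(x+v)-\delta(x)-\langle v,-\nu_{\pi(x)}\rangle|\leq C|v|^2$ for all small $v$. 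Dividing by $h$ and letting $h\to0$ gives the asserted limit $\langle a,-\nu_{\pi(x)}\rangle$, and the quadratic estimate (uniform in the direction) shows that $\delta$ is differentiable at $x$ with $\nabla\delta(x)=-\nu_{\pi(x)}$.

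Finally, to obtain $C^{1,1}$ regularity on the interior of $D_{r_0}$, I would combine two Lipschitz estimates already available: $\pi$ is $4$-Lipschitz on $D_{r_0}$ by Lemma 3.4, and $y\mapsto\nu_y$ is $c_0$-Lipschitz on $\partial D$ by the computation preceding Lemma 3.1. Composing, $|\nabla\delta(x)-\nabla\delta(x')|=|\nu_{\pi(x)}-\nu_{\pi(x')}|\leq 4c_0|x-x'|$, so $\nabla\delta$ is Lipschitz (in particular continuous), i.e.\ $\delta$ is of class $C^{1,1}$ there.

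The main technical obstacle is keeping the error terms in the norm expansions uniform in $x$: this is exactly why one restricts to $D_{r_0}$ with $r_0=r/4$, which keeps $|u|=r\mp\delta(x)$ bounded away from $0$. A minor but necessary point is the asymmetry of the two ball estimates — the interior ball yields a lower bound valid for all $w\in\RR^N$ by the triangle inequality alone, whereas the exterior ball's upper bound is only applied after verifying $x+ha\in D$, which holds precisely because $\delta$ is $1$-Lipschitz and $|h|<\delta(x)$.
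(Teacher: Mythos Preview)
Your proof is correct and follows the same overall strategy as the paper: sandwich $\delta(x+ha)$ between two explicit functions of $h$ whose difference quotients both tend to $\langle a,-\nu_{\pi(x)}\rangle$, then obtain the $C^{1,1}$ property by composing the $4$-Lipschitz bound on $\pi$ (Lemma~3.4) with the $c_0$-Lipschitz bound on $y\mapsto\nu_y$.

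The only difference is in the comparison objects for the sandwich. The paper takes the cruder upper bound $\delta(x+ha)\le|x+ha-\pi(x)|$ (just using $\pi(x)\in\partial D$) and the lower bound $\delta(x+ha)\ge 2\delta(x)-|x+ha-(2x-\pi(x))|$ coming from $B(2x-\pi(x),2\delta(x))\subset D$; neither bound needs the exterior tangent ball. You instead use both tangent balls of radius $r$ from Lemma~3.2, getting $r-|x+ha-c|\le\delta(x+ha)\le|x+ha-\tilde c|-r$. Your choice buys a remainder $O(h^2)$ with a constant depending only on $r$ (since $|u|\ge 3r/4$), whereas the paper's upper bound has remainder $O(h^2/\delta(x))$; this is harmless here because only the pointwise limit is needed and the $C^{1,1}$ step is handled separately by the Lipschitz composition, exactly as you do.
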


\begin{proof}
Choose $x\in \Int(D_{r_0})$ and $a\in S$. First observe, that

\[
	2\delta(x)-\left|ha-(x-\pi(x))\right|\leq\delta(x+ha)\leq\left|ha+x-\pi(x)\right|
\]
\\
for $|h|<\delta(x)$. We have

\[
	\delta(x+ha)-\delta(x)\geq\delta(x)-\left|ha-(x-\pi(x))\right|
\]

\[
	=\delta(x)-\sqrt{h^2-2h\langle a,x-\pi(x)\rangle+(\delta(x))^2}
\]

\[
	=h\frac{2\langle a,x-\pi(x)\rangle-h}{\delta(x)+\sqrt{h^2-2h\langle a,x-\pi(x)\rangle+(\delta(x))^2}};
\]
\\
\\
\[
	\delta(x+ha)-\delta(x)\leq\left|ha+x-\pi(x)\right|-\delta(x)
\]

\[
	=\sqrt{h^2+2h\langle a,x-\pi(x)\rangle+(\delta(x))^2}-\delta(x)
\]

\[
	=h\frac{2\langle a,x-\pi(x)\rangle+h}{\sqrt{h^2+2h\langle a,x-\pi(x)\rangle+(\delta(x))^2}+\delta(x)}.
\]
\\
If $h>0$, then

\[
	\frac{\delta(x+ha)-\delta(x)}{h}\geq
\]

\[
	\frac{2\langle a,x-\pi(x)\rangle-h}{\delta(x)+\sqrt{h^2-2h\langle a,x-\pi(x)\rangle+(\delta(x))^2}}
	\stackrel{h\rightarrow 0}{\longrightarrow}\frac{\langle a,x-\pi(x)\rangle}{\delta(x)},
\]
\\
similarly

\[
	\frac{\delta(x+ha)-\delta(x)}{h}\leq
\]

\[
	\frac{2\langle a,x-\pi(x)\rangle+h}{\delta(x)+\sqrt{h^2+2h\langle a,x-\pi(x)\rangle+(\delta(x))^2}}
	\stackrel{h\rightarrow 0}{\longrightarrow}\frac{\langle a,x-\pi(x)\rangle}{\delta(x)};
\]
\\
if $h<0$ the inequalities are reverse. Because $|x-\pi(x)|=\delta(x)$, we have 
\[
	\frac{\pi(x)-x}{\delta(x)}=\nu_{\pi(x)}.
\]
\\
Now $|\pi(x)-\pi(y)|\leq 4|x-y|$ on $D_{r_0}$ by Lemma 3.4; recalling, that $|\nu_z-\nu_{z'}|\leq c_0|z-z'|$ 
on $\partial D$, we conclude
	
\[
	\left|\frac{\partial\delta}{\partial x_i}(x)-\frac{\partial\delta}{\partial x_i}(y)\right|
	=\left|\langle e_i,\nu_{\pi(y)}-\nu_{\pi(x)}\rangle \right|\leq |\nu_{\pi(y)}-\nu_{\pi(x)}|
\]

\[
	\leq c_0|\pi(y)-\pi(x)|\leq 4c_0|x-y|
\]
\\
for every $x,y\in \Int(D_{r_0})$.

\end{proof}

\begin{lemma}
Let $\pi$ be the orthogonal projection. There exists $r>0$, such that $\pi$ is of class $C^1$ inside $D_r$.
\end{lemma}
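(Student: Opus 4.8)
The plan is to represent $\pi$ locally as the solution of a $C^1$ system of equations and to apply the implicit function theorem, the point being that the Jacobian of this system is uniformly invertible as soon as the distance to $\partial D$ is small. Fix $x_0\in D$ with $0<\delta(x_0)<r$, where $r>0$ is to be chosen small, and put $y_0=\pi(x_0)$. By Lemma 3.1 there is, in a suitable coordinate system near $y_0$, a $C^2$ function $\varphi_{y_0}$ whose graph is $\partial D$ near $y_0$, with $|\partial_i\varphi_{y_0}|\le M_1$ and $|\partial_i\partial_j\varphi_{y_0}|\le M_2$ for constants $M_1,M_2$ independent of $y_0$; write $\gamma(\overline{x})=(\overline{x},\varphi_{y_0}(\overline{x}))$ for this $C^2$ parametrization. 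For $x$ near $x_0$ and $\overline{p}$ near $\overline{y_0}$ set
\[
	G(x,\overline{p})=\big(\langle x-\gamma(\overline{p}),\partial_{x_i}\gamma(\overline{p})\rangle\big)_{i=1}^{N-1}\in\RR^{N-1}.
\]
Since $\gamma\in C^2$, the map $G$ is of class $C^1$, and $G(x,\overline{p})=0$ expresses exactly that $x-\gamma(\overline{p})$ is orthogonal to the tangent space $T_{\gamma(\overline{p})}(\partial D)$. In particular the coordinates $\overline{p}_0$ of $y_0=\pi(x_0)$ satisfy $G(x_0,\overline{p}_0)=0$, because the identity $x_0=\pi(x_0)-\delta(x_0)\nu_{\pi(x_0)}$ (the remark after Lemma 3.4) shows that $x_0-\gamma(\overline{p}_0)=-\delta(x_0)\nu_{y_0}$ is normal to $\partial D$ at $y_0$.

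The heart of the proof is the computation of $\partial_{\overline{p}}G(x_0,\overline{p}_0)$. Differentiating $G_i$ in $p_j$ and substituting $x_0-\gamma(\overline{p}_0)=-\delta(x_0)\nu_{y_0}$ gives
\[
	\frac{\partial G_i}{\partial p_j}(x_0,\overline{p}_0)=-\langle\partial_i\gamma,\partial_j\gamma\rangle-\delta(x_0)\langle\nu_{y_0},\partial_i\partial_j\gamma\rangle .
\]
Now $\big(\langle\partial_i\gamma,\partial_j\gamma\rangle\big)_{ij}$ is symmetric and dominates the identity (it equals $I+(\nabla\varphi_{y_0})(\nabla\varphi_{y_0})^{\top}$), so $-\big(\langle\partial_i\gamma,\partial_j\gamma\rangle\big)_{ij}$ has all eigenvalues $\le-1$; and $\big(\langle\nu_{y_0},\partial_i\partial_j\gamma\rangle\big)_{ij}$ is symmetric with entries bounded by $M_2$ (because $\partial_i\partial_j\gamma=(0,\partial_i\partial_j\varphi_{y_0})$), hence has operator norm at most $(N-1)M_2$. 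Therefore, choosing $r$ with $r<1/\big((N-1)M_2\big)$, and also $r\le\rho$ and $r$ at most the constant of Lemma 3.2, the matrix $\partial_{\overline{p}}G(x_0,\overline{p}_0)$ differs from $-\big(\langle\partial_i\gamma,\partial_j\gamma\rangle\big)_{ij}$ by the symmetric matrix $-\delta(x_0)\big(\langle\nu_{y_0},\partial_i\partial_j\gamma\rangle\big)_{ij}$ of norm $\le\delta(x_0)(N-1)M_2<1$, hence remains negative definite and in particular invertible. All the bounds here are uniform in $x_0$.

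By the implicit function theorem there is then a $C^1$ map $x\mapsto\overline{p}(x)$, defined in a neighborhood of $x_0$, with $\overline{p}(x_0)=\overline{p}_0$ and $G(x,\overline{p}(x))=0$, unique among $\overline{p}$ close to $\overline{p}_0$. Since $\pi$ is continuous (Lemma 3.4), for $x$ near $x_0$ the point $\pi(x)$ still lies in the graph chart, its coordinates depend continuously on $x$, equal $\overline{p}_0$ at $x_0$, and solve $G(x,\cdot)=0$ (again by the identity after Lemma 3.4); by the uniqueness clause they must equal $\overline{p}(x)$. Hence $\pi(x)=\gamma(\overline{p}(x))$ near $x_0$ is a composition of $C^1$ maps, so $\pi$ is $C^1$ near $x_0$; thus $\pi$ is $C^1$ on the open set $\{x\in D:0<\delta(x)<r\}$. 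Finally, this set contains $\Int(D_{r/2})$: the latter is open and contained in $\overline{D}$, and it cannot meet $\partial D$, because by the ball condition (Lemma 3.2) every neighborhood of $y\in\partial D$ contains points $y+\varepsilon\nu_y$ lying outside $\overline{D}$; hence $\Int(D_{r/2})\subseteq\overline{D}\setminus\partial D=D$, where $0<\delta\le r/2<r$. Replacing $r$ by $r/2$ gives the lemma. I expect the main obstacle to be the Jacobian computation; the crucial point there is that the second fundamental form correction is $O(\delta(x))$ and so harmless once $r$ is uniformly small, after which identifying the implicit solution with $\pi$ is a routine continuity-and-uniqueness argument.
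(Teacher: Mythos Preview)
Your argument is correct. Both proofs localize via the $C^2$ graph chart of Lemma 3.1 and reduce to a Jacobian computation, but the routes differ. The paper builds the tubular-neighborhood map $F(\overline{x},t)=(\overline{x},\varphi_y(\overline{x}))-t\,(-\nabla\varphi_y(\overline{x}),1)$, computes its Jacobian explicitly as an $N\times N$ matrix, shows $\det J\to -|\nabla\varphi_y|^2-1\neq 0$ as $t\to 0$, applies the \emph{inverse} function theorem, and then reads off $\pi_i=F_i^{-1}$ for $i<N$ and $\pi_N=\varphi_y\circ(\pi_1,\dots,\pi_{N-1})$. You instead characterize the foot-point by the orthogonality system $G(x,\overline{p})=0$, compute $\partial_{\overline{p}}G$ as the first fundamental form $I+(\nabla\varphi)(\nabla\varphi)^{\top}$ perturbed by $\delta(x_0)$ times the second-fundamental-form matrix, apply the \emph{implicit} function theorem, and identify the implicit solution with $\pi$ via the continuity of $\pi$ from Lemma 3.4 and local uniqueness. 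Your Jacobian estimate is cleaner and more conceptual (negative definite plus a small symmetric perturbation), and the uniform bound on $r$ is transparent; the paper's approach has the advantage of simultaneously producing the local diffeomorphism $F^{-1}$, which it reuses in the area-measure comparison of Section 4.
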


\begin{proof}
Choose $y\in\partial D$. It suffices to show, that there exists $r>0$, such that $\pi$ is of class $C^1$ on $D\cap B(y,r)$. 
Let $\rho$, $M_1$, $M_2$ be the constants from Lemma 3.1. Assume additionally, that $\rho$ satisfies the assertion of Lemma 3.2. 
Then there exists a local coordinate system $(\overline{x},z)$, where $\overline{x}\in\RR^{N-1}$, $z\in\RR$ 
and every point $x\in K(y,\rho)$ is represented as $x=(\overline{x},z)$, and a $C^2$ function 
$\varphi_{y}\colon B_{N-1}(\overline{y},\rho)\to\RR$, such that
\[
	\left|\frac{\partial\varphi_{y}}{\partial x_i}\right|\leq M_1,\quad
	\left|\frac{\partial^2\varphi_{y}}{\partial x_i\partial x_j}\right|\leq M_2,\quad\forall i,j\in\left\{1,...,N-1\right\},
\]
and
\[
	K(y,\rho)=\left\{(\overline{x},\varphi_{y}(\overline{x})):\overline{x}\in
	B_{N-1}(\overline{y},\rho)\right\}\cap B_N(y,\rho).
\]
\\
Without loss of generality we may assume, that $z=x_N$. Additionally we may assume, that 

\[
	B_N(y,\rho)\cap D=\left\{(\overline{x},x_N):\overline{x}\in
	B_{N-1}(\overline{y},\rho)\wedge x_N<\varphi_{y}(\overline{x})\right\}\cap B_N(y,\rho).
\]
\\
Then for $\overline{x}\in B_{N-1}(\overline{y},\rho)$, $w(\overline{x})=(-\nabla\varphi_y(\overline{x}),1)$ 
is the outward orthogonal vector field to $\partial D$ in $x=(\overline{x},\varphi_{y}(\overline{x}))\in K(y,\rho)$. 
Since $|w(\overline{x})|>0$, we have 
\[
	\frac{w(\overline{x})}{|w(\overline{x})|}=\nu_x.
\]
Let
 
\[
	F(\overline{x},t)=(\overline{x},\varphi_y(\overline{x}))-t\cdot w(\overline{x}),
	\quad\overline{x}\in B_{N-1}(\overline{y},\rho),t\in\RR.
\]
\\
Because $\varphi_y$ is of class $C^2$, $F$ is of class $C^1$ on $B_{N-1}(\overline{y},\rho)\times\RR$. 

Observe, that since $\rho$ satisfies the condition of Lemma 3.2, for $\rho'\leq\rho/2$ and $x\in D\cap B_N(y,\rho')$ we have

\[
	|\pi(x)-y|\leq |\pi(x)-x|+|x-y|=\delta(x)+|x-y|\leq 2|x-y|<\rho,
\]
\\
so $\pi(x)\in K(y,\rho)$. Therefore, if we denote $\pi(x)=\left(\overline{\pi(x)},\varphi_y\left(\overline{\pi(x)}\right)\right)$, 
then 

\[
	F\left(\overline{\pi(x)},\frac{\delta(x)}{|w(\overline{x})|}\right)=
	\left(\overline{\pi(x)},\varphi_y\left(\overline{\pi(x)}\right)\right)-\delta(x)\frac{w(\overline{x})}{|w(\overline{x})|}
\]

\[
	=\pi(x)-\delta(x)\nu_{\pi(x)}=x.
\]
\\
Since $|w(\overline{x})|\geq1$, we have

\[
	D\cap B_N(y,\rho')\subset F(B_{N-1}(\overline{y},\rho)\times (0,\rho')).
\]
\\
The Jacobian matrix of $F(\overline{x},t)=(\overline{x}+t\nabla\varphi_y(\overline{x}),\varphi_y(\overline{x})-t)$ has a form

\[
	J(\overline{x},t)=
	\left[
	\begin{matrix}
		1+t\frac{\partial^2g}{\partial x_1^2}(\overline{x}) & t\frac{\partial^2g}{\partial x_2\partial x_1}(\overline{x}) & \ldots 
		& t\frac{\partial^2g}{\partial x_{N-1}\partial x_1}(\overline{x}) & \frac{\partial g}{\partial x_1}(\overline{x}) \\
		t\frac{\partial^2g}{\partial x_1\partial x_2}(\overline{x}) & 1+t\frac{\partial^2g}{\partial x_2^2}(\overline{x}) & \ldots
		& t\frac{\partial^2g}{\partial x_{N-1}\partial x_2}(\overline{x}) & \frac{\partial g}{\partial x_2}(\overline{x}) \\
		\vdots & \vdots & \ddots & \vdots & \vdots \\ t\frac{\partial^2g}{\partial x_1\partial x_{N-1}}(\overline{x}) & 
		t\frac{\partial^2g}{\partial x_2\partial x_{N-1}}(\overline{x}) & \ldots & 1+t\frac{\partial^2g}{\partial x_{N-1}^2}(\overline{x})
		& \frac{\partial g}{\partial x_{N-1}}(\overline{x}) \\ \frac{\partial g}{\partial x_1}(\overline{x}) & 
		\frac{\partial g}{\partial x_2}(\overline{x}) & \ldots & \frac{\partial g}{\partial x_{N-1}}(\overline{x}) & -1
	\end{matrix}
	\right].
\]
\\
Therefore, if $t\rightarrow0$, then $J$ tends to the matrix 

\[
	J_0(\overline{x})=
	\left[
	\begin{matrix}
		1 & 0 & \ldots & 0 & \frac{\partial g}{\partial x_1}(\overline{x}) \\ 
		0 & 1 & \ldots & 0 & \frac{\partial g}{\partial x_2}(\overline{x}) \\
		\vdots & \vdots & \ddots & \vdots & \vdots \\ 0 & 0 & \ldots & 1 & \frac{\partial g}{\partial x_{N-1}}(\overline{x}) \\
		\frac{\partial g}{\partial x_1}(\overline{x}) & 
		\frac{\partial g}{\partial x_2}(\overline{x}) & 
		\ldots & \frac{\partial g}{\partial x_{N-1}}(\overline{x}) & -1 
	\end{matrix}
	\right].
\]
\\
Since $\varphi_y$ is of class $C^2$ and 
\[
	\left|\frac{\partial\varphi_{y}}{\partial x_i}\right|\leq M_1,\quad
	\left|\frac{\partial^2\varphi_{y}}{\partial x_i\partial x_j}\right|\leq M_2,\quad\forall i,j\in\left\{1,...,N-1\right\},
\]
\\
$\det J\stackrel{t\rightarrow0}{\longrightarrow}\det J_0$ uniformly on $B_{N-1}(\overline{y},\rho)$. A simple calculation shows, 
that $\det J_0(\overline{x})=-|\nabla\varphi_y(\overline{x})|^2-1$. Hence, there exists $r>0$, such that 
${\det J(\overline{x},t)\neq0}$ for every $x\in B_{N-1}(\overline{y},\rho)$ and $t\in (0,r)$. We may assume, that $r\leq\rho/2$. By 
inverse function theorem, $F$ is invertible in $B_{N-1}(\overline{y},\rho)\times (0,r)$, and $F^{-1}$ is of class $C^1$ in 
$F(B_{N-1}(\overline{y},\rho)\times (0,r))$. In particular, $F^{-1}$ is of class $C^{1}$ in $D\cap B_N(y,r)$. 

Now if $x\in D\cap B_N(y,r)$, then $x=F(\overline{x}',t)$ 
for some $\overline{x}'\in B_{N-1}(\overline{y},\rho)$ and $t\in (0,r)$. Thus, if we denote $F^{-1}(x)=(F_1^{-1}(x),...,F_N^{-1}(x))$, 
then for $i\in\left\{1,...,N-1\right\}$ we have $F_i^{-1}(x)=x_i'$. On the other side, 

\[
	x'=(\overline{x}',\varphi_y(\overline{x}'))=\pi(x)=(\pi_1(x),...,\pi_N(x)), 
\]
\\
since $\rho$ satisfies the condition of Lemma 3.2, and $r<\rho$. Hence 
\[
	\pi_i(x)=x_i'=F_i^{-1}(x),\ i=1,...,N-1.
\]
Moreover, 
\[
	\pi_N(x)=x_N'=\varphi_y(\overline{x}')=\varphi_y(\pi_1(x),...,\pi_{N-1}(x)),
\]
\\
and thus $\pi$ is of class $C^1$ in $D\cap B(y,r)$, as desired.

\end{proof}

\begin{corollary}
Let $r_0$ be as in Lemma 3.5. There exists $r>0$, $r\leq r_0$, such that $\delta$ is of class $C^2$ inside $D_r$. 
\end{corollary}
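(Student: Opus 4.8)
The plan is to differentiate $\delta$ once by invoking Lemma 3.5, and then to recognize the resulting gradient field as a composition of $C^1$ maps on a slightly thinner collar. By Lemma 3.5, $\delta$ is differentiable at every point of $\Int(D_{r_0})$, and since its directional derivative in a direction $a\in S$ is $\langle a,-\nu_{\pi(x)}\rangle$, we get
\[
	\nabla\delta(x)=-\nu_{\pi(x)},\qquad x\in\Int(D_{r_0}).
\]
So it suffices to exhibit $r\leq r_0$ for which the map $x\mapsto\nu_{\pi(x)}$ is of class $C^1$ on $\Int(D_r)$; then $\nabla\delta$ is $C^1$ there, i.e. $\delta\in C^2$ inside $D_r$.

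For the $C^1$-regularity of $x\mapsto\nu_{\pi(x)}$ I would first observe that the unit normal field extends to a genuine $C^1$ map on a full neighborhood of $\partial D$. Indeed $\lambda$ is $C^2$ on a neighborhood $U$ of $\overline{D}$, and $|\nabla\lambda|\geq c'>0$ on the compact set $\partial D$, so $|\nabla\lambda|>0$ on some open $V$ with $\partial D\subset V\subset U$; hence $N:=\nabla\lambda/|\nabla\lambda|$ is well defined and of class $C^1$ on $V$ (quotient of the $C^1$ map $\nabla\lambda$ by the $C^1$, nonvanishing function $|\nabla\lambda|$), and $N|_{\partial D}=\nu$. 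Next I would invoke Lemma 3.6 to obtain $r_1>0$ such that $\pi$ is of class $C^1$ inside $D_{r_1}$, viewed as a map into $\RR^N$ with image contained in $\partial D\subset V$. Put $r=\min\{r_0,r_1\}$. Then $D_r\subseteq D_{r_0}$ and $D_r\subseteq D_{r_1}$, so $\Int(D_r)\subseteq\Int(D_{r_0})\cap\Int(D_{r_1})$, and for every $x\in\Int(D_r)$ we have $\nu_{\pi(x)}=N(\pi(x))$. Since $N\circ\pi$ is a composition of $C^1$ maps it is $C^1$ on $\Int(D_r)$, whence $\nabla\delta=-N\circ\pi$ is $C^1$ on $\Int(D_r)$ and $\delta$ is of class $C^2$ inside $D_r$, as claimed.

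There is no deep obstacle here: essentially all the content sits in Lemmas 3.4--3.6, and the proof is just the chain rule applied to $\nabla\delta=-\nu\circ\pi$. The only points requiring a little care are the bookkeeping that legitimizes the composition --- that the collar is thin enough for $\pi$ to land in the open set $V$ on which $N$ is smooth (immediate, as $\pi$ maps into $\partial D$) and small enough to respect both thresholds $r_0$ and $r_1$ --- and the justification that $\nu$ is $C^1$ along $\partial D$, which we handled by extending it to $N$ on $V$. Alternatively one can stay in the graph charts: the proof of Lemma 3.6 gives $\pi_i=F_i^{-1}$ for $i<N$ and $\pi_N=\varphi_y(\pi_1,\dots,\pi_{N-1})$ near a boundary point $y$, while the normal at a boundary point is $w(\overline{x})/|w(\overline{x})|$ with $w(\overline{x})=(-\nabla\varphi_y(\overline{x}),1)$ and $\varphi_y\in C^2$; composing, and using compactness of $\partial D$ to replace the local radii by a single $r$, yields the same conclusion.
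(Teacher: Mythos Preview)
Your proof is correct and follows essentially the same route as the paper: both compute $\nabla\delta$ via Lemma~3.5 and then invoke Lemma~3.6 to see that this gradient is $C^1$ on a thin enough collar. The only cosmetic difference is that the paper writes $\partial_i\delta(x)=(x_i-\pi_i(x))/\delta(x)$ and argues directly that this quotient of $C^1$ functions is $C^1$, whereas you extend $\nu$ to $N=\nabla\lambda/|\nabla\lambda|$ on a neighborhood of $\partial D$ and compose; the paper's formulation spares you that extension step, but the content is the same.
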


\begin{proof}
By Lemma 3.5, for every $x\in D_{r_0}$ and $i=1,...,N$ we have

\[
	\frac{\partial\delta}{\partial x_i}(x)=\langle e_i,-\nu_{\pi(x)}\rangle = \frac{\langle e_i,x-\pi(x)\rangle}{\delta(x)}
	=\frac{x_i-\pi_i(x)}{\delta(x)}.
\]
\\
By Lemma 3.6, there exists $r>0$, such that $\pi=(\pi_1,...,\pi_N)$ is of class $C^1$ inside $D_r$. If we assume additionally, 
that $r\leq r_0$, then $\delta$ is of class $C^2$ inside $D_r$.
\\
\end{proof}


Now we will introduce some facts from classical harmonic analysis on a smooth domain $D\subset\RR^N$. 
Let $G_D(x,y)$ be the Green's function for $D$, defined in $(D\times\overline{D})\backslash\left\{(x,x):x\in\ D\right\}$. 
It is uniquely determined by the following properties:
\begin{enumerate}
	\item $G_D$ is of class $C^2$ on $(D\times D)\backslash\left\{(x,x):x\in D\right\}$ and of class $C^{2-\varepsilon}$ up to 
	$(D\times\overline{D})\backslash\left\{(x,x):x\in D\right\}$.
	\item $\Delta_yG_D(x,y)=0$ for every $y\in D$ and $y\neq x$.
	\item $G_D(x,y)+\Gamma_N(x-y)$ is harmonic on $D$ for each fixed $x\in D$, where $\Gamma_N$ is the fundamental solution 
	for the Laplacian on $\RR^N$, given by
	\[
		\Gamma_N(x)=
		\begin{cases}
		(2\pi)^{-1}\log|x|, & \ N=2,        \\
		(2-N)^{-1}\omega^{-1}_{N-1}|x|^{2-N}, & \ N>2,
		\end{cases}
	\]
	$\omega_{N-1}$ is the area measure of $S$ in $\RR^N$.
	\item $G_D(x,y)|_{y\in\partial D}=0$ for each fixed $x\in D$.
\end{enumerate}

The function 
\[
	P_D(x,y)=-\frac{\partial{G_D(x,y)}}{\partial{\nu_y}},
\] 
defined in $D\times\partial D$ is the Poisson kernel for $D$. It is continuous and strictly positive on $D\times\partial D$. 
Moreover, by the use of Green's theorem, one can prove the following property: if $u$ is harmonic on $D$ and continuous on 
$\overline{D}$, then 

\[
	u(x)=\int_{\partial D}P_D(x,y)u(y)d\sigma(y),\quad x\in D
\]
\\
(for more details and proofs, see \cite{K}). 
Now, since $D$ is a domain with the boundary of class $C^2$, the Dirichlet Problem is solvable. More precisely, for each 
$f\in C(\partial D)$, there exists $F\in C(\overline{D})$, such that $F|_{\partial D}=f$ and $F$ is harmonic on $D$ 
(a good source for these results is \cite{ABR}). Hence 

\[
	F(x)=\int_{\partial D}P_D(x,y)f(y)d\sigma(y),\quad x\in D.
\]
\\
Moreover, the maximum principle for harmonic functions implies, that $F$ is unique (since $D$ is bounded). Using this facts, 
one can prove the following known properties of the Poisson kernel for $D$:
\begin{enumerate}
	\item $P_D(x,\cdot)\geq c_x>0$, for each fixed $x\in D$.
	\item For every $x\in D$,
			\[
				\int_{\partial D}P_D(x,y)d\sigma(y)=1. 
			\]
	\item For any $\eta>0$ and any fixed $y\in\partial D$,
			\[
				\lim_{D\ni x\rightarrow y}\int_{\partial D\backslash K(y,\eta)}P_D(x,y)d\sigma(y)=0.
			\]
	\item $P_D(\cdot,y)$ is harmonic on $D$ for each fixed $y\in\partial D$.
\end{enumerate}

Unfortunately, $P_D$ almost never can be computed explicitly. However, in the special case when $D=B(a,r)$, 
the Poisson kernel is given by 

\[
	P_{B(a,r)}(x,y)=\frac{1}{\omega_{N-1}r}\cdot\frac{r^2-|x-a|^2}{|x-y|^N}
\]
\\ 
By the formula above and Lemma 3.2, we have the following inequality (more details also may be found in \cite{K}).

\begin{lemma}
There exists a positive constant $C$, such that for every $x\in D$ and $y\in\partial D$ we have
\[
	P_D(x,y)\leq\frac{C}{|x-y|^{N-1}}.
\]

\end{lemma}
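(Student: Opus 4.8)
The plan is to exploit the ball condition (Lemma 3.2) to dominate $P_D$ by the Poisson kernel of an exterior ball that touches $\partial D$ at $y$, using the maximum principle. Fix $y\in\partial D$ and let $r$ be the constant from Lemma 3.2, so that $B(\tilde c_y,r)$ with $\tilde c_y=y+r\nu_y$ satisfies $\overline{B}(\tilde c_y,r)\cap\overline D=\{y\}$. Then $\tilde B:=B(\tilde c_y,r)$ lies in $\RR^N\setminus\overline D$ except for the single boundary contact point $y$. The function $x\mapsto G_D(x,y')$ for $y'\in D$ near $\partial D$ is not directly what we want; instead I would work with $P_D(\cdot,y)$ itself, which by property 4 of the Poisson kernel is harmonic in $D$ and, by the $C^{2-\varepsilon}$ regularity of $G_D$ up to the boundary, extends continuously to $\overline D\setminus\{y\}$, vanishing on $\partial D\setminus\{y\}$ (since $G_D(\cdot,y)\equiv 0$ on $\partial D$ forces its normal derivative in the transverse directions... more carefully: $P_D(x,\cdot)$ vanishes appropriately; one uses instead the symmetry/continuity statement and compares pointwise).

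The cleaner route: compare the harmonic function $u(x)=P_D(x,y)$ on $D$ with a multiple of the exterior-ball Poisson-type barrier. Consider $w(x)=\dfrac{r^2-|x-\tilde c_y|^2}{|x-y|^N}$, which for $x$ outside $\overline B(\tilde c_y,r)$ is negative in the numerator — so that is the wrong sign. The right comparison uses an \emph{interior} tangent ball at a nearby boundary point for a lower bound, but for the \emph{upper} bound one compares $P_D$ on $D$ against $P_{B(a,R)}$ for a large ball $B(a,R)\supset \overline D$: by the maximum principle applied to the difference of the two harmonic measures, or more directly, by noting that the harmonic measure of $D$ is dominated in a suitable sense. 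Concretely, I would argue as follows. Pick $R_0>\diam(D)$ and, for fixed $y\in\partial D$, a large ball $\mathbb{B}=B(a,R)$ with $\overline D\subset\mathbb{B}$ chosen so that $y$ is as far from $a$ as possible while keeping $|x-y|$ comparable to the geometry; then for $x\in D$ the function $x\mapsto P_{\mathbb B}(x,y)=\frac1{\omega_{N-1}R}\cdot\frac{R^2-|x-a|^2}{|x-y|^N}$ is positive harmonic on $D$, and since $|x-a|^2\le R^2$ with $R^2-|x-a|^2\le 2R\,\dist(x,\partial\mathbb B)\le 2R\,C'$ uniformly, we get $P_{\mathbb B}(x,y)\le \dfrac{C''}{|x-y|^{N}}$, which is one power of $|x-y|$ too many. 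So the large-ball bound alone is not enough; we must use that $x$ ranges over $D$ only, where $\dist(x,\partial\mathbb B)$ is bounded \emph{below} near the contact region — this is exactly what the \emph{exterior} ball $\overline B(\tilde c_y,r)$ provides.

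So the actual argument: on $D$, for fixed $y$, define $h(x)=P_D(x,y)$, harmonic and continuous on $\overline D\setminus\{y\}$, vanishing on $\partial D\setminus\{y\}$; and define the barrier $b(x)=\dfrac{|x-\tilde c_y|^2-r^2}{|x-y|^N}$, which is \emph{non-negative} on $\overline D$ (since $\overline D\cap B(\tilde c_y,r)=\varnothing$), harmonic on $\RR^N\setminus\{\tilde c_y,y\}$ (a straightforward computation: it is, up to constants, the Kelvin transform of the exterior-ball Poisson kernel), strictly positive on $\partial D\setminus\{y\}$, and satisfying $b(x)\le \dfrac{2r\,|x-\tilde c_y| }{|x-y|^N}\cdot\frac{1}{|x-\tilde c_y|+r}\le \dfrac{C}{|x-y|^{N-1}}$ because $|x-\tilde c_y|^2-r^2=(|x-\tilde c_y|-r)(|x-\tilde c_y|+r)$ and $|x-\tilde c_y|-r\le |x-y|$ by the triangle inequality (the sphere is tangent at $y$). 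The key step is then: $h$ and $Cb$ are both harmonic in $D$, both extend continuously to $\overline D\setminus\{y\}$, and near the singular point $y$ we have $h(x)=O(|x-y|^{1-N})$ while $b(x)\ge c\,|x-y|^{1-N}$ for $x\in D$ approaching $y$ (using the exterior-ball geometry: for $x\in D$, $\dist(x,\overline B(\tilde c_y,r))\gtrsim$ distance along the normal, giving $|x-\tilde c_y|-r\gtrsim |x-y|$ times a constant... this needs the $C^2$ boundary to ensure $D$ stays on one side with a genuine cone). Choosing $C$ large enough that $Cb\ge h$ on $\partial(D\setminus B(y,\epsilon))$ for all small $\epsilon$, the maximum principle on $D\setminus B(y,\epsilon)$ gives $h\le Cb$ throughout, and letting $\epsilon\to0$ yields $P_D(x,y)\le Cb(x)\le \dfrac{C'}{|x-y|^{N-1}}$ with $C'$ independent of $y$ by the uniformity of $r$ in Lemma 3.2.

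The main obstacle is controlling the singularity of $P_D(\cdot,y)$ at $y$ well enough to run the maximum principle — specifically, justifying that $P_D(x,y)=O(|x-y|^{1-N})$ as $x\to y$ (so that the barrier, which blows up at the same rate, dominates after a constant adjustment). This can be extracted from the structure $G_D(x,y)=-\Gamma_N(x-y)+(\text{harmonic in }x)$: differentiating in $\nu_y$ and using that the harmonic correction term together with the $C^{2-\varepsilon}$-regularity up to the boundary is bounded near $y$, one finds $P_D(x,y)$ behaves like $-\partial_{\nu_y}\Gamma_N(x-y)=\text{const}\cdot\frac{\langle x-y,\nu_y\rangle}{|x-y|^N}$, which is indeed $O(|x-y|^{1-N})$; combined with $\langle x-y,\nu_y\rangle\le 0$ for $x\in D$ near $y$ and $|\langle x-y,\nu_y\rangle|\lesssim |x-y|$ trivially, this matches the barrier's behavior. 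Alternatively, and more in the spirit of this paper, one cites the explicit ball formula: cover a neighborhood of $y$ in $D$ by the interior tangent ball $B(c_y,r)$, $c_y=y-r\nu_y$, note $P_D(x,y)\le P_{B(c_y,r)}(x,y)$ for $x\in B(c_y,r)$ by the maximum principle (the difference vanishes on $\partial D\cap\partial B$ near $y$ and is controlled elsewhere), and then $P_{B(c_y,r)}(x,y)=\frac1{\omega_{N-1}r}\cdot\frac{r^2-|x-c_y|^2}{|x-y|^N}\le \frac1{\omega_{N-1}r}\cdot\frac{2r}{|x-y|^{N-1}}$ since $r^2-|x-c_y|^2=(r-|x-c_y|)(r+|x-c_y|)\le |x-y|\cdot 2r$; for $x\in D$ bounded away from $y$ the bound $P_D(x,y)\le C/|x-y|^{N-1}$ is immediate from continuity and compactness. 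Reconciling these two regimes (inside versus outside the tangent ball) is the only delicate bookkeeping, and it is handled by a standard partition into $D\cap B(y,r/2)$ and $D\setminus B(y,r/2)$.
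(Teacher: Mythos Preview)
Your barrier $b(x)=\dfrac{|x-\tilde c_y|^2-r^2}{|x-y|^N}$ is, up to the constant $(\omega_{N-1}r)^{-1}$, precisely the Poisson kernel $P_{\widetilde B_y^c}(x,y)$ of the exterior of the tangent ball, so you have located the right comparison object. The trouble is that you run the maximum principle in the wrong variable. Comparing $P_D(\cdot,y)$ with $Cb$ as functions of $x$ on $D\setminus B(y,\varepsilon)$ forces you to (i) know that $P_D(\cdot,y)$ extends by $0$ to $\partial D\setminus\{y\}$, which is not among the properties the paper has available, and (ii) bound the singularity via $b(x)\ge c\,|x-y|^{1-N}$ for $x\in D$ near $y$. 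Claim (ii) is false: since $\tilde c_y=y+r\nu_y$ one has $|x-\tilde c_y|^2-r^2=|x-y|^2-2r\langle x-y,\nu_y\rangle$, and for $x\in\overline D$ approaching $y$ tangentially along $\partial D$ the inner product is $O(|x-y|^2)$ by $C^2$ smoothness, whence $|x-\tilde c_y|-r\sim|x-y|^2/(2r)$ and $b(x)\sim|x-y|^{2-N}$, one power short of what you assert. Your fallback via the \emph{interior} tangent ball $B(c_y,r)$ has the inequality reversed: domain monotonicity gives $G_{B(c_y,r)}\le G_D$ on $B(c_y,r)$, both vanish at $t=y$, and differentiating along $\nu_y$ yields $P_{B(c_y,r)}(x,y)\le P_D(x,y)$ --- a lower bound, which is in fact exactly how Lemma~6.1 is proved later in the paper.

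The paper's argument keeps the exterior ball but compares Green functions in the \emph{second} variable. For fixed $x\in D$, the function $t\mapsto G_{\widetilde B_y^c}(x,t)-G_D(x,t)$ is harmonic on $D$ and nonnegative on $\partial D$ (since $G_D(x,\cdot)\equiv0$ and $G_{\widetilde B_y^c}(x,\cdot)\ge0$ there), hence nonnegative on $\overline D$; equality holds at $t=y$, so taking $-\partial_{\nu_y}$ gives $P_D(x,y)\le P_{\widetilde B_y^c}(x,y)$ at once, with no singular point to excise and no boundary regularity of $P_D(\cdot,y)$ needed. The explicit formula $P_{\widetilde B_y^c}(x,y)=\dfrac{|x-\tilde c_y|^2-r^2}{\omega_{N-1}r\,|x-y|^N}$ together with the triangle inequality $|x-\tilde c_y|-r\le|x-y|$ then yields the bound with $C=(\diam(D)+2r)/(\omega_{N-1}r)$, uniform in $y$ because $r$ is.
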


\begin{proof}
Fix $x\in D$, $y\in\partial D$. Choose, by Lemma 2, $r>0$ (which does not depend on $y$), such that 

\[
	\widetilde{B}_y=B(y+r\nu_y,r)\subset D^c.
\]
\\
Let $G_{\widetilde{B}^{c}_{y}}$ be the Green's function for $\widetilde{B}^{c}_{y}$. Observe that $G_D(x,\cdot)=0$ on $\partial D$,  
whereas $G_{\widetilde{B}^{c}_{y}}(x,\cdot)\geq0$ on $\partial D$. Since $G_{\widetilde{B}^{c}_{y}}(x,\cdot)-G_D(x,\cdot)$ 
is harmonic on $D$, it follows that 

\[
	G_D(x,t)\leq G_{\widetilde{B}^{c}_{y}}(x,t),\quad t\in\overline{D}
\]

\[
	G_D(x,y)=G_{\widetilde{B}^{c}_{y}}(x,y)=0.
\]
\\
Therefore
\[
	P_D(x,y)=-\frac{\partial{G_D(x,y)}}{\partial{\nu_y}}\leq 
	-\frac{\partial{G_{\widetilde{B}^{c}_{y}}(x,y)}}{\partial{\nu_y}}=P_{\widetilde{B}^{c}_{y}}(x,y).
\]
\\
The Poisson kernel for $\widetilde{B}_{y}$ has a form

\[
	P_{\widetilde{B}_{y}}(x,t)=\frac{1}{\omega_{N-1}r}\cdot\frac{r^2-|x-\tilde{c}_y|^2}{|x-t|^N},
	\quad x\in \widetilde{B}_{y},t\in\partial \widetilde{B}_{y},
\]
\\
where $\tilde{c}_y=y+r\nu_y$. By Kelvin transform, the Poisson kernel for $\widetilde{B}^{c}_{y}$ is 

\[
	P_{\widetilde{B}^{c}_{y}}(x,t)=\frac{1}{\omega_{N-1}r}\cdot\frac{|x-\tilde{c}_y|^2-r^2}{|x-t|^N}.
\]
\\
Hence we have

\[
	P_D(x,y)\leq\frac{1}{\omega_{N-1}r}\cdot\frac{|x-\tilde{c}_y|^2-r^2}{|x-t|^N}
	=\frac{1}{\omega_{N-1}r}\cdot\frac{(|x-\tilde{c}_y|-|\tilde{c}_y-y|)(|x-\tilde{c}_y|+r)}{|x-t|^N}
\]

\[
	\leq\frac{1}{\omega_{N-1}r}\cdot\frac{(|x-y|)(|x-y|+2r)}{|x-t|^N}\leq\frac{\diam(D)+2r}{\omega_{N-1}r}\cdot\frac{1}{|x-y|^{N-1}}
	=\frac{C}{|x-y|^{N-1}}.
\]
\\
\end{proof}

\medskip
In the next sections we will deal with the Banach spaces $L^p(\partial D)$, where $1\leq p\leq\infty$. When $p\in[1,\infty)$, $L^p(\partial D)$ consists of the Borel measurable functions $f$ on $\partial D$ for which
\[
	\left\|f\right\|_p=\left(\int_{\partial D}|f|^pd\sigma\right)^{\frac{1}{p}}<\infty;
\]
$L^{\infty}(\partial D)$ consists of the Borel measurable functions $f$ on $\partial D$ for which 
$\left\|f\right\|_{\infty}<\infty$, where $\left\|f\right\|_{\infty}$ denotes the essential supremum norm on $\partial D$ with 
respect to $\sigma$. The number $q\in[1,\infty]$ is said to be $conjugate$ to $p$ if $1/p+1/q=1$. If $1\leq p<\infty$ and $q$ 
is conjugate to $p$, then $L^q(\partial D)$ is the dual space of $L^p(\partial D)$; we identify $g\in L^q(\partial D)$ with 
the linear functional $\Lambda_g$ on $L^p(\partial D)$ defined by
\[
	\Lambda_g(f)=\int_{\partial D}fg\ d\sigma.
\]

Let $1\leq p<\infty$ and let $q$ be conjugate to $p$; we say, that the sequence $\left\{g_n\right\}\subset L^q(\partial D)$ 
converges weak$^{\ast}$ to $g\in L^q(\partial D)$, if $\Lambda_{g_n}(f)\stackrel{n}{\rightarrow}\Lambda_g(f)$ for every 
$f\in L^p(\partial D)$. Note that because $\sigma$ is a finite measure on $\partial D$, ${L^p(\partial D)\subset L^1(\partial D)}$ 
for all $p\in[1,\infty]$. Recall also that $C(\partial D)$ is dense in $L^{p}(\partial D)$ for $1\leq p<\infty$. 

For $f\in L^{1}(\partial D)$ and $\mu\in M(\partial D)$ (the set of complex Borel measures on $\partial D$), 
we define the Poisson integrals of $f$ and $\mu$, respectively as

\[
	P_D[f](x)=\int_{\partial D}{P_D(x,y)f(y)d\sigma(y)},
\]

\[
	P_D[\mu](x)=\int_{\partial D}{P_D(x,y)d\mu(y)}.
\]
\\
Differentiating under the integral sign, we see that $P_D[f]$ and $P_D[\mu]$ are harmonic for every 
$f\in L^{1}(\partial D)$ and $\mu\in M(\partial D)$.
\medskip

\section{The spaces $\textit{h}^{\textit{p}}\left(\textit{D}\right)$}\medskip
\markright{The spaces $\textit{h}^{\textit{p}}\left(\textit{D}\right)$}

\bigskip

As we said before, $D$ is a bounded domain with the boundary of class $C^2$, and $\lambda$ is a characterizing 
function for $D$. Of course there are infinitely many such characterizing functions. Now we will show, that every 
characterizing function is comparable to $\delta$ near $\partial D$. Recall that 
$D_r=\left\{x\in\overline{D}:\delta(x)\leq r\right\}$.

\begin{lemma}
Let $\lambda$ be a characterizing function for $D$. There exist positive constants $C,r$, such that for every $x\in D_r$ 
\[
	\frac{\delta(x)}{C}\leq-\lambda(x)\leq C\delta(x).
\]\end{lemma}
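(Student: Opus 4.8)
The idea is to show that $-\lambda$ and $\delta$ have comparable first-order behaviour near $\partial D$. Since both vanish on $\partial D$, the natural tool is the mean value theorem applied along the segment from a boundary point to the interior point, with the key observation that $\delta$ has a nonvanishing normal derivative on $\partial D$ (it equals $1$ in the inward normal direction, by Lemma 3.5) and likewise $\lambda$ has $|\nabla\lambda|$ bounded away from $0$ on $\partial D$ (property 4 of a characterizing function, together with compactness of $\partial D$). Concretely, I would first fix $r$ small enough that $r\le r_0$ and so that Corollary 3.1 applies, hence $\delta$ is $C^2$ (in particular $C^1$) inside $D_r$, and also small enough that $\lambda$ is $C^1$ on a neighbourhood of $D_r$ (which it is, being $C^2$ near $\overline D$).

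For the upper bound $-\lambda(x)\le C\delta(x)$: given $x\in D_r$, write $x=\pi(x)-\delta(x)\nu_{\pi(x)}$ and apply the mean value theorem to the function $t\mapsto \lambda\big(\pi(x)-t\nu_{\pi(x)}\big)$ on $[0,\delta(x)]$, using $\lambda(\pi(x))=0$. This gives $-\lambda(x)=\delta(x)\,\langle \nabla\lambda(\xi),\nu_{\pi(x)}\rangle$ for some point $\xi$ on the segment; since $|\nabla\lambda|\le c_1$ on a neighbourhood of $\overline D$ (the bound from the proof of Lemma 3.1), we get $-\lambda(x)\le c_1\,\delta(x)$. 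For the lower bound $\delta(x)\ge -\lambda(x)/C$: here I would estimate $\delta(x)$ from below by comparing with the distance in the normal direction. One clean route: by the ball condition (Lemma 3.2), $B(\pi(x)-r\nu_{\pi(x)},r)\subset D$, so the point $x$ lies inside $\overline D$ at distance $\delta(x)$ from the boundary; more usefully, I would instead bound $-\lambda(x)$ from below by $\delta(x)$ times a positive constant using the same MVT identity $-\lambda(x)=\delta(x)\,\langle\nabla\lambda(\xi),\nu_{\pi(x)}\rangle$, provided I can guarantee $\langle\nabla\lambda(\xi),\nu_{\pi(x)}\rangle\ge c>0$ along the whole segment.

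**The main obstacle.** The delicate point is exactly that lower bound: $\langle\nabla\lambda(\xi),\nu_{\pi(x)}\rangle$ is bounded below by a positive constant only when $\xi$ is close to $\partial D$, because $\nabla\lambda(\pi(x))$ is parallel to $\nu_{\pi(x)}$ with $|\nabla\lambda(\pi(x))|\ge c'>0$, and by uniform continuity of $\nabla\lambda$ on a neighbourhood of $\overline D$ one has $\langle\nabla\lambda(\xi),\nu_{\pi(x)}\rangle\ge c'/2$ whenever $|\xi-\pi(x)|$ is smaller than some $r_1>0$ independent of the boundary point. So the correct choice is $r\le\min\{r_0,r_1\}$ and small enough for Corollary 3.1; then for $x\in D_r$ the segment from $\pi(x)$ to $x$ has length $\delta(x)\le r\le r_1$, every $\xi$ on it satisfies $|\xi-\pi(x)|\le r_1$, and the MVT identity yields $-\lambda(x)\ge (c'/2)\,\delta(x)$. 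Combining the two inequalities with $C=\max\{c_1,\,2/c'\}$ gives $\delta(x)/C\le -\lambda(x)\le C\delta(x)$ on $D_r$, as required. Throughout, the constants $c_1,c'$ are those already produced in the proof of Lemma 3.1 (bounds on $\nabla\lambda$ and on $|\nabla\lambda|$ from below on $\partial D$), and $r_0$ is from Lemma 3.5; the only genuinely new ingredient is the uniform-continuity argument fixing $r_1$.
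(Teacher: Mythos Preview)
Your argument is correct and follows essentially the same route as the paper: both apply the mean value theorem to $t\mapsto\lambda\big(\pi(x)-t\nu_{\pi(x)}\big)$ on $[0,\delta(x)]$ to obtain $-\lambda(x)=\delta(x)\,\langle\nabla\lambda(\xi),\nu_{\pi(x)}\rangle$, then bound the inner product above by $\sup|\nabla\lambda|$ and below by a positive constant coming from $|\nabla\lambda(\pi(x))|\ge c'$ together with the smallness of $|\xi-\pi(x)|$. The only cosmetic difference is that the paper controls the lower bound by estimating $\bigl|\nabla\lambda(\xi)/|\nabla\lambda(\xi)|-\nu_{\pi(x)}\bigr|$ via a Lipschitz constant and the identity $\langle a,b\rangle=(2-|a-b|^2)/2$ for unit vectors, whereas you invoke uniform continuity of $\nabla\lambda$ directly; both yield $\langle\nabla\lambda(\xi),\nu_{\pi(x)}\rangle\ge c'/2$ once $r$ is small enough. (Your references to Lemma~3.5 and Corollary~3.1 are unnecessary here---neither the differentiability of $\delta$ nor its $C^2$ regularity is used in the argument---but taking $r\le r_0$ does no harm.)
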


\begin{proof}
By the properties of characterizing function, there exist positive constants $c_1,c_2,r_1$, such that 
$c_1\leq|\nabla\lambda(x)|\leq c_2$ for every $x\in D_{r_1}$. Assume additionally, that $r_1$ satisfies the condition of Lemma 3.2. Moreover, we may assume, that for some $c_3>0$ and every $x,y\in D_{r_1}$ we have
\[
	|\nabla\lambda(x)-\nabla\lambda(y)|\leq c_3|x-y|,
\]
since $\lambda$ is of class $C^2$ in a neighborhood of $\overline{D}$. Hence, similarly as in section 3 we conclude
\[
	\left|\frac{\nabla\lambda(x)}{|\nabla\lambda(x)|}-\frac{\nabla\lambda(y)}{|\nabla\lambda(y)|}\right|\leq c_4|x-y|,
	\quad x,y\in D_{r_1},
\]
\\
where $c_4=2c_3/c_1$. Take $r_2=\min\left\{r_1,1/c_4\right\}$, and fix $x\in D_{r_2}$. By Lemmas 3.2 and 3.4, 
$B(x,\delta(x))\cap\partial D=\left\{\pi(x)\right\}$. Moreover, for every $t\in(0,1)$, 
\[
	tx+(1-t)\pi(x)\in D_{r_2}.
\] 
Let 
\[
	f(t)=\lambda(tx+(1-t)\pi(x)),\quad t\in [0,1].
\]
Then $f$ is differentiable, and 
\[
	f'(t)=\langle\nabla\lambda(tx+(1-t)\pi(x)),x-\pi(x)\rangle.
\]
Thus we have
\[
	-\lambda(x)=-(\lambda(x)-\lambda(\pi(x)))=-(f(1)-f(0))=-f'(\theta)
\]

\[
	=\langle\nabla\lambda(\theta x+(1-\theta)\pi(x)),\pi(x)-x\rangle,
\]
\\
for some $\theta\in(0,1)$. Denote $x_{\theta}=\theta x+(1-\theta)\pi(x)$; therefore

\[
	-\lambda(x)=\langle\nabla\lambda(x_{\theta}),\pi(x)-x\rangle
	=\langle\frac{\nabla\lambda(x_{\theta})}{|\nabla\lambda(x_{\theta})|},\frac{\pi(x)-x}{\delta(x)}\rangle
	|\nabla\lambda(x_{\theta})|\delta(x)
\]

\[
	=\langle\frac{\nabla\lambda(x_{\theta})}{|\nabla\lambda(x_{\theta})|},
	\frac{\nabla\lambda(\pi(x))}{|\nabla\lambda(\pi(x))|}\rangle|\nabla\lambda(x_{\theta})|\delta(x)
\]

\[
	=\frac{2-\left|\frac{\nabla\lambda(x_{\theta})}{|\nabla\lambda(x_{\theta})|}-
	\frac{\nabla\lambda(\pi(x))}{|\nabla\lambda(\pi(x))|}\right|^2}{2}|\nabla\lambda(x_{\theta})|\delta(x).
\]
\\
Now observe, that 

\[
	\left|\frac{\nabla\lambda(x_{\theta})}{|\nabla\lambda(x_{\theta})|}-\frac{\nabla\lambda(\pi(x))}{|\nabla\lambda(\pi(x))|}\right|
	\leq c_4|x_{\theta}-\pi(x)|=c_4\theta|x-\pi(x)|\leq c_4\delta(x)\leq c_4r_2\leq 1,
\]
\\
since $x_{\theta}\in D_{r_2}$. Hence 

\[
	\frac{c_1}{2}\delta(x)\leq-\lambda(x)\leq c_2\delta(x),
\]
\\
what gives the conclusion of the lemma.

\end{proof}

Each characterizing function $\lambda$ determines a family of approximating subdomains $D^{\varepsilon}_{\lambda}$, 
for $\varepsilon$ sufficiently small and positive. Clearly, by the properties of characterizing function, there exists $c,r'>0$, such that for every $x\in D_{r'}$, $|\nabla\lambda(x)|\geq c$. Moreover, we can choose 
$\varepsilon_{\lambda}>0$ with the property, that if $0<\varepsilon<\varepsilon_{\lambda}$ and $\lambda(x)=-\varepsilon$, then $x\in D_{r'}$ (Lemma 4.1 may be helpful here). For $\varepsilon$ as above let 
$D^{\varepsilon}_{\lambda}=\left\{x:\lambda(x)<-\varepsilon\right\}$. 
Then $\partial D^{\varepsilon}_{\lambda}$ is the level surface $\left\{x:\lambda(x)=-\varepsilon\right\}$, and 
$\lambda(x)+\varepsilon$ is a characterizing function for $D^{\varepsilon}_{\lambda}$ (thus $\partial D^{\varepsilon}_{\lambda}$ is of class $C^2$).

Let $\sigma_{\varepsilon}$ be the area measure on $\partial D^{\varepsilon}_{\lambda}$. Let $\pi$ be the orthogonal projection onto $\partial D$. Then we may choose $\varepsilon_{\lambda}$ so small, so that for each 
$0<\varepsilon<\varepsilon_{\lambda}$, $\sigma_{\varepsilon}$ is locally a transform of the measure $\sigma$ in the following sense. 
For $y\in\partial D^{\varepsilon}_{\lambda}$ let $(\overline{x},z)$ be the local coordinate system near $y$ and 
$\varphi^{\varepsilon}_y$ a real-valued $C^2$ function as in Lemma 3.1 (thus every point $x\in\partial D^{\varepsilon}_{\lambda}$ 
near $y$ is represented as $x=(\overline{x},z)$, where $z=\varphi^{\varepsilon}_y(\overline{x})$, and $\overline{x}$ 
means the projection of $x$ into $\RR^{k-1}\times\left\{0\right\}\times\RR^{N-k}$, for some $k\in\left\{1,...,N\right\}$). 
We may assume, that the same local coordinate system $(\overline{x},z)$ 
corresponds to the neighborhood of $\pi(y)$ in $\partial D$ (which means the same projection $\overline{x}$, but here 
$z=\varphi_{\pi(y)}(\overline{x})$). For $\overline{x}$ near $\overline{y}$ denote 
\[
\widetilde{\pi}_{\varepsilon}(\overline{x})=\overline{\pi(\overline{x},\varphi^{\varepsilon}_y(\overline{x}))}.
\]
Then there exists $\rho_0>0$ (which does not depend on $\varepsilon$ and $y$, $0<\varepsilon<\varepsilon_{\lambda}$, 
$y\in\partial D^{\varepsilon}_{\lambda}$), such that 
\[
	\det J_{\widetilde{\pi}_{\varepsilon}}(\overline{x})\geq c_{\varepsilon}>0,\quad  \forall
	\overline{x}\in\left\{\overline{x}:x\in B(y,\rho_0)\cap\partial D^{\varepsilon}_{\lambda}\right\},
\]
where $J_{\widetilde{\pi}_{\varepsilon}}$ is the Jacobian matrix of $\widetilde{\pi}_{\varepsilon}$. Moreover, 
$\det J_{\widetilde{\pi}_{\varepsilon}}(\overline{x})$ tends to 1 as $\varepsilon\rightarrow0$ uniformly with respect to $\overline{x}\in\left\{\overline{x}:x\in B(y,\rho_0)\cap\partial D^{\varepsilon}_{\lambda}\right\}$ (what can be proved explicitly, by the use of some facts contained in the proof of Lemma 3.6). Thus  $\widetilde{\pi}_{\varepsilon}$ 
is invertible in $\left\{\overline{x}:x\in B(y,\rho_0)\cap\partial D^{\varepsilon}_{\lambda}\right\}$, and we have 

\[
	\sigma_{\varepsilon}(B(y,\rho_0)\cap\partial D^{\varepsilon}_{\lambda})=
	\int_{\left\{\overline{x}:x\in B(y,\rho_0)\cap\partial D^{\varepsilon}_{\lambda}\right\}}
	\sqrt{1+|\nabla\varphi^{\varepsilon}_y(\overline{x})|^2}d\overline{x}
\]

\[
	=\int_{\widetilde{\pi}_{\varepsilon}\left(\left\{\overline{x}:x\in B(y,\rho_0)\cap\partial D^{\varepsilon}_{\lambda}\right\}\right)}
	\sqrt{1+|\nabla\varphi^{\varepsilon}_y(\widetilde{\pi}^{-1}_{\varepsilon}(\overline{w}))|^2}
	\det J_{\widetilde{\pi}^{-1}_{\varepsilon}}(\overline{w})d\overline{w}
\]

\[
	=\int_{K_{\varepsilon}}\frac{\sqrt{1+|\nabla\varphi^{\varepsilon}_y(\widetilde{\pi}^{-1}_{\varepsilon}(\overline{w}))|^2}}
	{\sqrt{1+|\nabla\varphi_{\pi(y)}(\overline{w})|^2}}\det J_{\widetilde{\pi}^{-1}_{\varepsilon}}(\overline{w})d\sigma(w),
\]
where
\[
	K_{\varepsilon}=\left\{(\overline{w},\varphi_{\pi(y)}(\overline{w})):\overline{w}\in
	\widetilde{\pi}_{\varepsilon}\left(\left\{\overline{x}:x\in B(y,\rho_0)\cap\partial D^{\varepsilon}_{\lambda}\right\}\right)\right\}.
\]
\\
Therefore we conclude, that if $0<\varepsilon<\varepsilon_{\lambda}$, then 
$\pi_{\varepsilon}:=\pi|_{\partial D^{\varepsilon}_{\lambda}}$ is invertible, and for 
$f\in C(\partial D^{\varepsilon}_{\lambda})$ we have 

\[
	\int_{\partial D^{\varepsilon}_{\lambda}}f(x)d\sigma_{\varepsilon}(x)=
	\int_{\partial D}f(\pi^{-1}_{\varepsilon}(y))\phi_{\varepsilon}(y)d\sigma(y),
\]
\\
where $\phi_{\varepsilon}$ is locally well defined, and tends to 1 uniformly as $\varepsilon\rightarrow0$ 
(thus we may assume, that $\phi_{\varepsilon}\leq2$ for every $0<\varepsilon<\varepsilon_{\lambda}$).

Now for fixed characterizing function $\lambda$, $\varepsilon_{\lambda}$ as above and $1\leq p\leq\infty$, 
we define a space $\textit{h}^p(D)$ to be the class of function $u$ harmonic on $D$, for which
 
\[
	\left\|u\right\|^{\lambda}_{\textit{h}^p}=\sup_{0<\varepsilon<\varepsilon_{\lambda}}
	\left(\int_{\partial D^{\varepsilon}_{\lambda}}|u(x)|^pd\sigma_{\varepsilon}(x)\right)^\frac{1}{p}<\infty.
\]
\\
The spaces $\textit{h}^p(D)$ are called "harmonic Hardy spaces". 
Note that $\textit{h}^{\infty}(D)$ is simply the collection of functions harmonic and bounded on $D$, and that 
\[
	\left\|u\right\|^{\lambda}_{\textit{h}^{\infty}}=\left\|u\right\|_{\textit{h}^{\infty}}=\sup_{x\in D}|u(x)|.
\]
Moreover, $\textit{h}^p(D)\subset\textit{h}^q(D)$ for $1\leq q<p\leq\infty$. 

Observe, that since $\sigma_{\varepsilon}(\partial D^{\varepsilon}_{\lambda})\leq2\sigma(\partial D)$ for every 
$0<\varepsilon<\varepsilon_{\lambda}$, the harmonic function $u\in\textit{h}^p(D)$ if and only if

\[
	\sup_{0<\varepsilon<\varepsilon'}
	\left(\int_{\partial D^{\varepsilon}_{\lambda}}|u(x)|^pd\sigma_{\varepsilon}(x)\right)^\frac{1}{p}<\infty
\]
\\
for every $0<\varepsilon'<\varepsilon_{\lambda}$. 

The next lemma shows, that the definition of $\textit{h}^p(D)$ does not depend on characterizing function.

\begin{lemma} (Stein)\\
Let $\lambda_1$ and $\lambda_2$ be two characterizing functions for $D$. Then for each $p$, ${1\leq p\leq\infty}$, and each harmonic function $u$ on $D$, the two conditions 

\[
	\sup_{0<\varepsilon<\varepsilon_{\lambda_i}}
	\left(\int_{\partial D^{\varepsilon}_{\lambda_i}}|u(x)|^pd\sigma^{i}_{\varepsilon}(x)\right)^\frac{1}{p}<\infty,\quad i=1,2,
\]
\\
are equivalent ($\sigma^{i}_{\varepsilon}$ is the area measure on $\partial D^{\varepsilon}_{\lambda_i}$).

\end{lemma}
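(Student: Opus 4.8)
The plan is to show that the two Hardy-type quantities are comparable, with constants depending only on $\lambda_1$, $\lambda_2$, and $D$. Since the situation is symmetric in $\lambda_1$ and $\lambda_2$, it suffices to prove one implication: if the supremum over the approximating domains $D^\varepsilon_{\lambda_1}$ is finite, then so is the one over $D^\varepsilon_{\lambda_2}$. The key geometric fact is that all the surfaces $\partial D^\varepsilon_{\lambda_i}$ lie in a fixed collar $D_{r}$ of $\partial D$ once $\varepsilon$ is small, and by Lemma~4.1 each $\lambda_i$ is comparable to $\delta$ there; hence the level surface $\{\lambda_1=-\varepsilon'\}$ is squeezed between two level surfaces $\{\lambda_2=-\varepsilon_1\}$ and $\{\lambda_2=-\varepsilon_2\}$ with $\varepsilon_1 \asymp \varepsilon' \asymp \varepsilon_2$. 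The idea is then to control $|u|$ on $\partial D^{\varepsilon'}_{\lambda_1}$ by an average of $|u|^p$ over a nearby piece of $\partial D^{\varepsilon_2}_{\lambda_2}$ using subharmonicity of $|u|^p$ (for $p<\infty$; the case $p=\infty$ is trivial since then the norm is just $\sup_D|u|$).

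Concretely, I would proceed as follows. First fix $x \in \partial D^{\varepsilon'}_{\lambda_1}$ for small $\varepsilon'$. By Lemma~4.1 applied to $\lambda_2$, the point $x$ satisfies $\delta(x) \asymp \varepsilon'$, so there is a fixed constant $\kappa$ (independent of $x$ and $\varepsilon'$) such that the ball $B(x, \kappa\varepsilon')$ is contained in the region $\{ -2\varepsilon' \le \lambda_2 \le -\tfrac12\varepsilon'\}$, say — i.e. it sits inside the shell between two $\lambda_2$-approximating domains and does not touch $\partial D$. Since $|u|^p$ is subharmonic on $D$, the sub-mean-value inequality over $B(x,\kappa\varepsilon')$ gives
\[
    |u(x)|^p \le \frac{C}{(\kappa\varepsilon')^{N}} \int_{B(x,\kappa\varepsilon')} |u(w)|^p \, dw .
\]
Writing this volume integral in "polar" form adapted to the family $\{\partial D^s_{\lambda_2}\}$ — i.e. slicing $B(x,\kappa\varepsilon')$ by the level surfaces $\partial D^s_{\lambda_2}$ for $s$ in an interval of length $\asymp \varepsilon'$, using that $|\nabla\lambda_2|$ is bounded above and below on the collar — one gets
\[
    |u(x)|^p \le \frac{C'}{\varepsilon'^{\,N-1}} \int_{s \asymp \varepsilon'} \sigma^2_s\!\big(B(x,\kappa\varepsilon')\cap\partial D^s_{\lambda_2}\big) \; ds
    \le \frac{C''}{\varepsilon'^{\,N-1}} \int_{s \asymp \varepsilon'} \Big( \int_{\partial D^s_{\lambda_2}} |u|^p \, d\sigma^2_s \Big) ds .
\]
The inner integral is $\le (M^{\lambda_2}_p)^p$ where $M^{\lambda_2}_p$ denotes the finite supremum we are trying to bound — wait, that is circular; instead one keeps it as is and integrates $x$ over $\partial D^{\varepsilon'}_{\lambda_1}$.

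The final step is to integrate the pointwise bound over $x \in \partial D^{\varepsilon'}_{\lambda_1}$ against $d\sigma^1_{\varepsilon'}$. Using Fubini and the fact that the family of balls $\{B(x,\kappa\varepsilon') : x \in \partial D^{\varepsilon'}_{\lambda_1}\}$ has bounded overlap (each point of $D$ lies in at most a fixed number of them, a consequence of the uniform surface measure estimate Lemma~3.3 applied on the level surfaces), one obtains
\[
    \int_{\partial D^{\varepsilon'}_{\lambda_1}} |u|^p \, d\sigma^1_{\varepsilon'} \le C \sup_{s \asymp \varepsilon'} \int_{\partial D^{s}_{\lambda_2}} |u|^p \, d\sigma^2_{s} \le C \Big( \sup_{0<s<\varepsilon_{\lambda_2}} \int_{\partial D^{s}_{\lambda_2}} |u|^p \, d\sigma^2_{s} \Big),
\]
and taking the supremum over small $\varepsilon'$ gives the desired domination. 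The main obstacle is the bookkeeping in the middle step: one must set up the coarea/slicing of the Euclidean ball by the $\lambda_2$-level surfaces carefully — this is where the uniform two-sided bound on $|\nabla\lambda_2|$ on the collar (from the properties of a characterizing function together with Lemma~4.1) and the uniform surface-measure growth estimate (Lemma~3.3, applied to the approximating domains, whose boundaries are uniformly $C^2$ by the remark preceding the lemma) are both essential, and one must check that the implied constants genuinely do not depend on $\varepsilon'$. The bounded-overlap property of the ball family, needed to pass the Fubini step without losing a power of $\varepsilon'$, is the other point requiring care.
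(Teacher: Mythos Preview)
Your approach is essentially identical to the paper's: apply the sub-mean-value inequality on a ball $B(x,c\varepsilon)$ which, by Lemma~4.1, lies in a shell between two level surfaces of the other characterizing function; integrate over $x$, swap the order of integration, use the Lemma~3.3--type estimate $\sigma_{\varepsilon}(\partial D^{\varepsilon}\cap B(w,c\varepsilon))\le C\varepsilon^{N-1}$ for the bounded overlap, and finally slice the resulting volume integral over the shell by the coarea formula. Two cosmetic remarks: the direction you announce (bounding the $\lambda_2$-sup by the $\lambda_1$-sup) is the opposite of the one you actually compute, and in your intermediate display the integrand $|u|^p$ is missing and the power of $\varepsilon'$ is off by one --- but since you immediately abandon that line and land on the correct Fubini argument, neither affects the validity.
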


\begin{proof}
It suffices to show, that the last condition for $i=1$ implies the same condition for $i=2$. Because for $p=\infty$ the conclusion is trivial, we may assume that $1\leq p<\infty$. Denote 
\[
	M=\sup_{0<\varepsilon<\varepsilon_{\lambda_1}}
	\int_{\partial D^{\varepsilon}_{\lambda_1}}|u(x)|^pd\sigma^{1}_{\varepsilon}(x).
\]
\\
Let $r>0$ be the constant from Lemma 4.1, such that $\lambda_1$ and $\lambda_2$ are comparable to $\delta$ in $D_r$. 
Clearly, there exists $C>0$, such that 

\[
	\frac{\delta(x)}{C}\leq-\lambda_i(x)\leq C\delta(x),\quad i=1,2,\quad x\in D_r.
\]
\\
We may assume, that $C\geq 1$. Take $\varepsilon_0>0$, so that if $0<\varepsilon<\varepsilon_0$ and $\lambda_i(x)=-\varepsilon$, $i=1,2$, then $x\in D_r$. Assume additionally, that 
\[
	\varepsilon_0\leq\frac{\min\left\{\varepsilon_{\lambda_1},\varepsilon_{\lambda_2},r\right\}}{2C^2}.
\]
First we show, that there exist positive constants $c,c_1,c_2$, so that if $0<\varepsilon<\varepsilon_0$ 
and $x\in\partial D^{\varepsilon}_{\lambda_2}$, then 

\[
	B(x,c\varepsilon)\subset\left\{y:-c_1\varepsilon<\lambda_1(y)<-c_2\varepsilon\right\}=L_{\varepsilon}.
\]
\\
Let $c<\frac{1}{2C}$, and choose $0<\varepsilon<\varepsilon_0$, $x\in\partial D^{\varepsilon}_{\lambda_2}$. 
Thus, by Lemma 4.1
\[
	\frac{\delta(x)}{C}\leq-\lambda_2(x)\leq C\delta(x),
\]

\[
	\frac{\delta(x)}{C}\leq\varepsilon\leq C\delta(x),
\]

\[
	\frac{\varepsilon}{C}\leq\delta(x)\leq C\varepsilon,
\]
\\
since $x\in D_r$. If $y\in B(x,c\varepsilon)$, then we have 	

\[
	\delta(x)-|x-y|\leq\delta(y)\leq\delta(x)+|x-y|,
\]

\[
	\varepsilon\left(\frac{1}{C}-c\right)<\delta(y)<(C+c)\varepsilon.
\]
\\
Since $(C+c)\varepsilon<2C\varepsilon_0\leq r$, $y\in D_r$ and thus 
\[
 \frac{\delta(y)}{C}<-\lambda_1(y)<C\delta(y),
\]

\[
	\frac{\varepsilon}{C}\left(\frac{1}{C}-c\right)<-\lambda_1(y)< C(C+c)\varepsilon.
\]
\\
Denote $c_1=C(C+c)$, $c_2=\frac{1}{C}\left(\frac{1}{C}-c\right)$. Therefore we have 
\[
	B(x,c\varepsilon)\subset L_{\varepsilon}=\left\{y:-c_1\varepsilon<\lambda_1(y)<-c_2\varepsilon\right\}.
\]
for every $0<\varepsilon<\varepsilon_0$ and $x\in\partial D^{\varepsilon}_{\lambda_2}$.

Now, by the mean value property and Jensen inequality,

\[
	|u(x)|^p=\left|c_3\varepsilon^{-N}\int_{B(x,c\varepsilon)}u(y)dy\right|^p
	\leq c_3\varepsilon^{-N}\int_{B(x,c\varepsilon)}|u(y)|^pdy.
\]
\\
Therefore, for $0<\varepsilon<\varepsilon_0$ we have

\[
	\int_{\partial D^{\varepsilon}_{\lambda_2}}|u(x)|^pd\sigma^{2}_{\varepsilon}(x)\leq
	c_3\varepsilon^{-N}\int_{\partial D^{\varepsilon}_{\lambda_2}}
	\left(\int_{B(x,c\varepsilon)}|u(y)|^pdy\right)d\sigma^{2}_{\varepsilon}(x)
\]

\[
	=c_3\varepsilon^{-N}\int_{\RR^N}\left(\int_{\partial D^{\varepsilon}_{\lambda_2}}
	\chi_{\varepsilon}(x,y)d\sigma^{2}_{\varepsilon}(x)\right)|u(y)|^pdy,
\]
\\
where $\chi_{\varepsilon}(x,y)$ is the characteristic function of the ball $B(x,c\varepsilon)$. Observe, that 
\[
	\int_{\partial D^{\varepsilon}_{\lambda_2}}\chi_{\varepsilon}(x,y)d\sigma^{2}_{\varepsilon}(x)=0
\]
for $y\notin L_{\varepsilon}=\left\{y:-c_1\varepsilon<\lambda_1(y)<-c_2\varepsilon\right\}$. Moreover, there exists 
positive constant $c_4$, such that for every $0<\varepsilon<\varepsilon_0$ and $y\in L_{\varepsilon}$ we have
\[
	\int_{\partial D^{\varepsilon}_{\lambda_2}}\chi_{\varepsilon}(x,y)d\sigma^{2}_{\varepsilon}(x)\leq c_4\varepsilon^{N-1}
\]
(by Lemma 3.3 and the properties of the transform $\phi_{\varepsilon}$). Hence

\[
	\int_{\partial D^{\varepsilon}_{\lambda_2}}|u(x)|^pd\sigma^{2}_{\varepsilon}(x)\leq
	c_3c_4\varepsilon^{-1}\int_{L_{\varepsilon}}|u(y)|^pdy
\]

\[
	=c_5\varepsilon^{-1}\int^{c_1\varepsilon}_{c_2\varepsilon}\left(
	\int_{\partial D^{\eta}_{\lambda_1}}|u(y)|^pd\sigma^{1}_{\eta}(y)\right)d\eta\leq c_5(c_1-c_2)M,
\]
\\
since $c_1\varepsilon<2C^2\varepsilon_0\leq\varepsilon_{\lambda_1}$. Thus the lemma is proved.

\end{proof}

By definition, if $u$ is harmonic and bounded on $D$, then $u\in\textit{h}^{\infty}(D)$. The next lemma shows a similar result for $p<\infty$.

\begin{lemma}
Let $1\leq p<\infty$. Suppose $u$ is harmonic on $D$, and there exists a positive harmonic function $h$ on $D$, such that 
$|u(x)|^p\leq h(x)$, for every $x\in D$. Then $u\in\textit{h}^p(D)$. 

\end{lemma}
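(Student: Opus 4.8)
The plan is to reduce everything to a single uniform lower bound for the Poisson kernels of the approximating domains $D^{\varepsilon}_{\lambda}$. Since $|u|^{p}\le h$ on $D$, for every $0<\varepsilon<\varepsilon_{\lambda}$ we have $\int_{\partial D^{\varepsilon}_{\lambda}}|u|^{p}\,d\sigma_{\varepsilon}\le\int_{\partial D^{\varepsilon}_{\lambda}}h\,d\sigma_{\varepsilon}$, so it suffices to bound $\int_{\partial D^{\varepsilon}_{\lambda}}h\,d\sigma_{\varepsilon}$ uniformly in $\varepsilon$. I would fix a point $x_{0}$ well inside $D$ and then choose $\varepsilon_{0}\in(0,\varepsilon_{\lambda})$ so small that $x_{0}\in D^{\varepsilon}_{\lambda}$ for all $\varepsilon<\varepsilon_{0}$ (Lemma 4.1 makes such a choice explicit). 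As noted in Section 4, each $D^{\varepsilon}_{\lambda}$ is a bounded domain with boundary of class $C^{2}$ (characterizing function $\lambda+\varepsilon$), and $h$, being harmonic on $D$, is continuous on the compact set $\overline{D^{\varepsilon}_{\lambda}}\subset D$; hence the Poisson representation recalled in Section 3, applied on $D^{\varepsilon}_{\lambda}$, gives $h(x_{0})=\int_{\partial D^{\varepsilon}_{\lambda}}P_{D^{\varepsilon}_{\lambda}}(x_{0},y)\,h(y)\,d\sigma_{\varepsilon}(y)$. So the whole matter reduces to the claim that there is $c>0$, independent of $\varepsilon<\varepsilon_{0}$ and of $y\in\partial D^{\varepsilon}_{\lambda}$, with $P_{D^{\varepsilon}_{\lambda}}(x_{0},y)\ge c$ (the property $P_{D}(x,\cdot)\ge c_{x}$ from Section 3 is not enough, as it is not uniform in the domain).

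Granting this claim, the proof is immediate. For $0<\varepsilon<\varepsilon_{0}$,
$\int_{\partial D^{\varepsilon}_{\lambda}}|u|^{p}\,d\sigma_{\varepsilon}\le\int_{\partial D^{\varepsilon}_{\lambda}}h\,d\sigma_{\varepsilon}\le c^{-1}\int_{\partial D^{\varepsilon}_{\lambda}}P_{D^{\varepsilon}_{\lambda}}(x_{0},y)\,h(y)\,d\sigma_{\varepsilon}(y)=c^{-1}h(x_{0})$.
For $\varepsilon_{0}\le\varepsilon<\varepsilon_{\lambda}$ the surfaces $\partial D^{\varepsilon}_{\lambda}$ all lie in the compact set $\{x:\lambda(x)\le-\varepsilon_{0}\}\subset D$, on which $h$ is bounded, say by $M_{0}$, so $\int_{\partial D^{\varepsilon}_{\lambda}}|u|^{p}\,d\sigma_{\varepsilon}\le M_{0}\,\sigma_{\varepsilon}(\partial D^{\varepsilon}_{\lambda})\le 2M_{0}\,\sigma(\partial D)$. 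Taking the supremum over $0<\varepsilon<\varepsilon_{\lambda}$ yields $\|u\|^{\lambda}_{h^{p}}<\infty$, i.e. $u\in h^{p}(D)$.

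To prove the claim I would run the argument of Lemma 3.8 in reverse, with an interior ball in place of the exterior one, and then apply the Hopf lemma quantitatively. First, applying Lemma 3.1 and Lemma 3.2 to $D^{\varepsilon}_{\lambda}$ with characterizing function $\lambda+\varepsilon$ — whose relevant $C^{1,1}$-type constants are uniform in $\varepsilon$, since for $\varepsilon<\varepsilon_{\lambda}$ all the level surfaces $\{\lambda=-\varepsilon\}$ lie in a fixed neighbourhood of $\partial D$ on which $\lambda$ is $C^{2}$ with $|\nabla\lambda|\ge c'>0$ — one obtains a radius $r_{1}>0$, independent of $\varepsilon$ and $y$ (and which may be shrunk at will), and a ball $B(z_{y},r_{1})\subset D^{\varepsilon}_{\lambda}$ with $y\in\partial B(z_{y},r_{1})$; thus the outward unit normal to $D^{\varepsilon}_{\lambda}$ at $y$ is $(y-z_{y})/r_{1}$, and $\delta(z_{y})\ge r_{1}$. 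Shrinking $r_{1}$ and $\varepsilon_{0}$ if necessary, one arranges both $x_{0}\notin\overline{B(z_{y},r_{1})}$ and $\{x\in D:\delta(x)\ge r_{1}/2\}\subset D^{\varepsilon}_{\lambda}$ for all $\varepsilon<\varepsilon_{0}$. Next, comparison of $G_{D^{\varepsilon}_{\lambda}}(x_{0},\cdot)$ with the Green function of a fixed small ball centred at $x_{0}$ gives $G_{D^{\varepsilon}_{\lambda}}(x_{0},\cdot)\ge c_{0}>0$ on a fixed sphere about $x_{0}$ (uniformly), and a Harnack chain of uniformly bounded length through the fixed interior region $\{\delta\ge r_{1}/2\}$ then yields $G_{D^{\varepsilon}_{\lambda}}(x_{0},z_{y})\ge c_{1}>0$ uniformly. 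Finally, on the annulus $B(z_{y},r_{1})\setminus\overline{B(z_{y},r_{1}/2)}$ I compare the positive harmonic function $v:=G_{D^{\varepsilon}_{\lambda}}(x_{0},\cdot)$, which by Harnack is $\ge c_{2}c_{1}$ on the inner sphere and is $\ge 0$ on the outer sphere, with the explicit radial harmonic barrier $\Phi$ equal to $c_{2}c_{1}$ on $|x-z_{y}|=r_{1}/2$ and to $0$ on $|x-z_{y}|=r_{1}$ (a multiple of $|x-z_{y}|^{2-N}-r_{1}^{2-N}$ for $N>2$, of $\log(r_{1}/|x-z_{y}|)$ for $N=2$). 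The maximum principle gives $v\ge\Phi$ on the annulus; since $v(y)=\Phi(y)=0$, the one-sided derivative of $v$ at $y$ in the direction $-(y-z_{y})/r_{1}$ (pointing into $D^{\varepsilon}_{\lambda}$) is at least that of $\Phi$, which is a positive number depending only on $N$, $r_{1}$ and $c_{1}c_{2}$. That derivative is exactly $-\partial G_{D^{\varepsilon}_{\lambda}}(x_{0},y)/\partial\nu_{y}=P_{D^{\varepsilon}_{\lambda}}(x_{0},y)$, so the claim follows.

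The reduction in the first two paragraphs is essentially formal; the real content — and the main obstacle — is the uniform estimate $P_{D^{\varepsilon}_{\lambda}}(x_{0},\cdot)\ge c$. The difficulty is one of uniformity throughout: one must verify that the interior-ball radius $r_{1}$, the near-$x_{0}$ lower bound $c_{0}$, the Harnack constant together with the length of the Harnack chain, and the barrier derivative all depend only on $D$, $\lambda$ and $x_{0}$, and never on $\varepsilon$. Keeping $x_{0}$ a single point common to every $D^{\varepsilon}_{\lambda}$, and restricting to $\varepsilon<\varepsilon_{0}$ so that a fixed interior region of $D$ sits inside each $D^{\varepsilon}_{\lambda}$, is exactly what makes this bookkeeping go through.
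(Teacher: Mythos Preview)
Your argument is correct, and its skeleton---represent $h$ by the Poisson integral on each $D^{\varepsilon}_{\lambda}$ and secure a lower bound $P_{D^{\varepsilon}_{\lambda}}(x_{0},\cdot)\ge c$ uniform in $\varepsilon$---coincides with the paper's. The genuine difference is in how that uniform lower bound is obtained. The paper does not attack a general $\lambda$ head-on; it chooses the special characterizing function $\lambda_{0}(x)=-G_{D}(x_{0},x)$. The approximating domains are then the superlevel sets $\{G_{D}(x_{0},\cdot)>\varepsilon\}$, and one has the exact identity $G_{D^{\varepsilon}_{0}}(x_{0},\cdot)=G_{D}(x_{0},\cdot)-\varepsilon$ (same singularity, same boundary values), whence $P_{D^{\varepsilon}_{0}}(x_{0},y)=|\nabla_{y}G_{D}(x_{0},y)|$ for $y\in\partial D^{\varepsilon}_{0}$. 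Since $|\nabla_{y}G_{D}(x_{0},y)|=P_{D}(x_{0},y)\ge c_{x_{0}}>0$ on $\partial D$, continuity of $\nabla G_{D}(x_{0},\cdot)$ delivers the uniform lower bound immediately, and Lemma~4.2 then transfers the conclusion to an arbitrary $\lambda$. Your route---uniform interior ball, domain monotonicity of the Green function, a Harnack chain through the fixed compact $\{\delta\ge r_{1}/2\}$, and the Hopf barrier on an annulus---is the standard quantitative Hopf-lemma package; it is longer and demands the bookkeeping you describe, but it works directly with the given $\lambda$ and never invokes Lemma~4.2. The paper's trick is slicker once Lemma~4.2 is available; yours is more self-contained and perhaps more portable to settings where a Green-function characterizing function is awkward.
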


\begin{proof}
Fix $x_0\in D$ and let $\lambda_0(x)=-G_D(x_0,x)$. Since 
\[
	|\nabla\lambda_0(y)|=|\nabla_y G_D(x_0,y)|=-\frac{\partial{G_D(x_0,y)}}{\partial{\nu_y}}=P_D(x_0,y)\geq c_{x_0}>0,
\]
for every $y\in\partial D$ and $\lambda_0|_{\partial D}\equiv0$, $\lambda_0$ is a characterizing function for $D$ 
(near the boundary). Let $\varepsilon_0=\varepsilon_{\lambda_0}>0$ be so small, so that 
$D^{\varepsilon}_{0}=D^{\varepsilon}_{\lambda_0}$ are well-defined approximating subdomains for $0<\varepsilon<\varepsilon_0$. 
Then $D^{\varepsilon}_{0}$ are of class $C^2$ and have their Green's functions $G_{D^{\varepsilon}_{0}}$ and Poisson kernels $P_{D^{\varepsilon}_{0}}$.

Now observe, that by the properties of the Green's function, 
\[
	G_{D^{\varepsilon}_{0}}(x_0,x)=G_D(x_0,x)-\varepsilon,
\]
for every $x\in D^{\varepsilon}_{0}$, $x\neq x_0$, since $G_D(x_0,\cdot)|_{\partial D^{\varepsilon}_{0}}\equiv\varepsilon$. Hence 

\[
	P_{D^{\varepsilon}_{0}}(x_0,x)=|\nabla_x G_{D^{\varepsilon}_{0}}(x_0,x)|=|\nabla_x G_D(x_0,x)|,\quad 
	x\in\partial D^{\varepsilon}_{0},
\]
and
\[
	h(x_0)=\int_{\partial D^{\varepsilon}_{0}}P_{D^{\varepsilon}_{0}}(x_0,x)h(x)d\sigma_{\varepsilon}(x)=
	\int_{\partial D^{\varepsilon}_{0}}|\nabla_x G_D(x_0,x)|h(x)d\sigma_{\varepsilon}(x).
\]
\\

Now, because $|\nabla_y G_D(x_0,\pi(x))|=P_D(x_0,\pi(x))\geq c_{x_0}>0$ for every ${x\in{D}}$, where $\pi$ the orthogonal projection, there exists $\varepsilon_1>0$, such that 
\[
	|\nabla_y G_D(x_0,x)|\geq \frac{c_{x_0}}{2},
\]
if $\delta(x)<\varepsilon_1$. Since $\delta|_{\partial D^{\varepsilon}_{0}}$ tends to 0 uniformly as $\varepsilon\rightarrow0$, 
we may choose $\varepsilon_2$, $0<\varepsilon_2<\varepsilon_0$, such that the last inequality holds for 
$x\in\partial D^{\varepsilon}_{0}$, whenever $0<\varepsilon<\varepsilon_2$. Thus

\[
	h(x_0)=\int_{\partial D^{\varepsilon}_{0}}|\nabla_x G_D(x_0,x)|h(x)d\sigma_{\varepsilon}(x)\geq 
	\frac{c_{x_0}}{2}\int_{\partial D^{\varepsilon}_{0}}h(x)d\sigma_{\varepsilon}(x),
\]
\\
for $0<\varepsilon<\varepsilon_2$, and we have

\[
	\sup_{0<\varepsilon<\varepsilon_2}\int_{\partial D^{\varepsilon}_{0}}|u(x)|^pd\sigma_{\varepsilon}(x)\leq
	\sup_{0<\varepsilon<\varepsilon_2}\int_{\partial D^{\varepsilon}_{0}}h(x)d\sigma_{\varepsilon}(x)\leq 
	\frac{2h(x_0)}{c_{x_0}}.
\]
\\
By Lemma 4.2 we conclude, that $u\in\textit{h}^p(D)$.

\end{proof}

A simple corollary of Lemma 4.3 is, that every positive harmonic function on $D$ belongs to $\textit{h}^1(D)$. 
In particular, for fixed $y\in\partial D$, $P_D(\cdot,y)\in\textit{h}^1(D)$. Moreover we have
\begin{enumerate}
	\item If $\mu\in M(\partial D)$, then $P_D[\mu]\in\textit{h}^1(D)$.
	\item If $1\leq p\leq\infty$ and $f\in L^p(\partial D)$, then $P_D[f]\in\textit{h}^p(D)$.
\end{enumerate}

To see 1, choose $\mu\in M(\partial D)$ and let $u=P_D[\mu]$. Then for every $x\in D$ we have 
\[
	|u(x)|=\left|\int_{\partial D}P_D(x,y)d\mu(y)\right|\leq\int_{\partial D}P_D(x,y)d|\mu|(y).
\]
Since $|\mu|$ is positive and finite measure on $\partial D$, $P_D[|\mu|]$ is positive and harmonic in $D$, 
and by Lemma 4.3, $u\in\textit{h}^1(D)$. Proof of 2 is similar. For fixed $1\leq p<\infty$, $f\in L^p(\partial D)$, 
let $u=P_D[f]$. Then by Jensen inequality we have 
\[
	|u(x)|^p=\left|\int_{\partial D}P_D(x,y)f(y)d\sigma(y)\right|^p\leq\int_{\partial D}P_D(x,y)|f(y)|^pd\sigma(y)
	=P_D\left[|f|^p\right](x),
\]
for every $x\in D$. Since $P_D\left[|f|^p\right]$ is positive, $u\in\textit{h}^p(D)$. The case $p=\infty$ is the easiest. For $f\in L^{\infty}(\partial D)$ and $u=P_D[f]$ we have 
\[
	|u(x)|\leq \int_{\partial D}P_D(x,y)|f(y)|d\sigma(y)\leq\left\|f\right\|_{\infty}\int_{\partial D}P_D(x,y)d\sigma(y)=\left\|f\right\|_{\infty},
\]
and thus $u\in\textit{h}^{\infty}(D)$.

In the next part of $\textit{h}^p$ theory, we will need some stronger assertion about the functions
$\left\{P_D(\cdot,y)\right\}_{y\in\partial D}$.

\begin{lemma}
Let $\lambda$ be a characterizing function for $D$ and choose $\varepsilon_{\lambda}>0$ as before. There exists a positive constant $C_{\lambda}$, such that for every $y\in\partial D$
\[
	\left\|P_D(\cdot,y)\right\|^{\lambda}_{\textit{h}^1}\leq C_{\lambda}.
\]
\end{lemma}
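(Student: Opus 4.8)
The plan is not to estimate $P_D(\cdot,y)$ from scratch, but to feed it into Lemma 4.3 for a cleverly chosen characterizing function and then move to the given $\lambda$ by Lemma 4.2; the only thing to watch is that both of those lemmas were in fact proved with explicit constants that do not involve the function $u$, hence not $y$.

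Fix once and for all a point $x_0\in D$ and put $\lambda_0(x)=-G_D(x_0,x)$, which, as noted in the proof of Lemma 4.3, is a characterizing function for $D$ near $\partial D$ with $|\nabla\lambda_0|=P_D(x_0,\cdot)\ge c_{x_0}>0$ on $\partial D$. Apply Lemma 4.3 with $p=1$ and $u=h=P_D(\cdot,y)$ (legitimate, since $|u|=u=h$ is positive and harmonic on $D$): its proof produces constants $\varepsilon_0=\varepsilon_{\lambda_0}$ and $\varepsilon_2\in(0,\varepsilon_0)$ depending only on $x_0$ and $D$, together with the bound
\[
\int_{\partial D^{\varepsilon}_{\lambda_0}}P_D(x,y)\,d\sigma_\varepsilon(x)\le\frac{2\,P_D(x_0,y)}{c_{x_0}},\qquad 0<\varepsilon<\varepsilon_2.
\]
Because $P_D(x_0,\cdot)$ is continuous and strictly positive on the compact set $\partial D$, the right side is at most $A:=\frac{2}{c_{x_0}}\sup_{t\in\partial D}P_D(x_0,t)<\infty$, independently of $y$. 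For $\varepsilon_2\le\varepsilon<\varepsilon_0$ the set $\{x\in\overline D:G_D(x_0,x)\ge\varepsilon_2\}$ is a compact subset of $D$, so it lies at a distance $d_1>0$ from $\partial D$ and contains $\partial D^{\varepsilon}_{\lambda_0}$ throughout that range; by Lemma 3.7 ($P_D(x,y)\le C|x-y|^{1-N}\le Cd_1^{1-N}$ there) and $\sigma_\varepsilon(\partial D^{\varepsilon}_{\lambda_0})\le 2\sigma(\partial D)$ the integral is $\le 2C\sigma(\partial D)d_1^{1-N}$. Hence $\|P_D(\cdot,y)\|^{\lambda_0}_{\textit{h}^1}\le B$ for a constant $B$ not depending on $y\in\partial D$.

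It remains to pass from $\lambda_0$ to the given $\lambda$. The proof of Lemma 4.2, run with $\lambda_1=\lambda_0$ and $\lambda_2=\lambda$, gives constants $\varepsilon_{\ast}\in(0,\varepsilon_{\lambda}]$ and $\kappa>0$ depending only on $\lambda_0,\lambda,D$ such that every harmonic $u$ satisfies $\sup_{0<\varepsilon<\varepsilon_{\ast}}\int_{\partial D^{\varepsilon}_{\lambda}}|u|\,d\sigma_\varepsilon\le\kappa\,\|u\|^{\lambda_0}_{\textit{h}^1}$; with $u=P_D(\cdot,y)$ this is $\le\kappa B$. For $\varepsilon_{\ast}\le\varepsilon<\varepsilon_{\lambda}$ one argues exactly as in the previous paragraph: $\{x\in\overline D:\lambda(x)\le-\varepsilon_{\ast}\}$ is a compact subset of $D$ at positive distance from $\partial D$ containing each $\partial D^{\varepsilon}_{\lambda}$ in that range, so Lemma 3.7 and $\sigma_\varepsilon(\partial D^{\varepsilon}_{\lambda})\le2\sigma(\partial D)$ bound the integral uniformly. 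Taking the supremum over all $0<\varepsilon<\varepsilon_{\lambda}$ yields $\|P_D(\cdot,y)\|^{\lambda}_{\textit{h}^1}\le C_{\lambda}$ with $C_{\lambda}$ independent of $y$, which is the assertion.

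The main obstacle is purely bookkeeping: Lemmas 4.2 and 4.3 are phrased as qualitative membership facts, so one has to look inside their proofs, isolate the explicit inequalities $\int_{\partial D^{\varepsilon}_{\lambda_0}}|u|\,d\sigma_\varepsilon\le\frac{2h(x_0)}{c_{x_0}}$ and $\int_{\partial D^{\varepsilon}_{\lambda}}|u|\,d\sigma_\varepsilon\le\kappa\|u\|^{\lambda_0}_{\textit{h}^1}$, and verify that none of their constants depend on $u$ (hence not on $y$); the small extra estimates for $\varepsilon$ bounded away from $0$ are routine consequences of Lemma 3.7 and $\sigma_\varepsilon(\partial D^{\varepsilon}_{\lambda})\le2\sigma(\partial D)$. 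If one prefers a self-contained route, it can be done by hand: the exterior-ball comparison already used in the proof of Lemma 3.7 in fact gives the sharper pointwise bound $P_D(x,y)\le C\,(|\pi(x)-y|^{2}+\delta(x))\,|x-y|^{-N}$ for $x\in D_r$ (the extra power of $\delta$ coming from $|\langle w-y,\nu_y\rangle|\le C_{\ast}|w-y|^{2}$ for $w,y\in\partial D$, a second-order Taylor estimate on the graphing functions $\varphi_y$ using the $C^2$ bounds of Lemma 3.1), and then integrating this against $d\sigma_\varepsilon$ — after transporting the measure-growth estimate of Lemma 3.3 to $\partial D^{\varepsilon}_{\lambda}$ through $\pi_\varepsilon$ with $\phi_\varepsilon\le2$, and splitting $\partial D^{\varepsilon}_{\lambda}$ into dyadic annuli around $y$ — produces a $y$-independent geometric series; here $\delta(x)\asymp\varepsilon$ on $\partial D^{\varepsilon}_{\lambda}$ (from Lemma 4.1) is precisely what makes the series converge.
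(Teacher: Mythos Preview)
Your proof is correct and is essentially the paper's own argument: use $\lambda_0=-G_D(x_0,\cdot)$, extract from the proof of Lemma~4.3 the quantitative bound $\int_{\partial D^{\varepsilon}_{\lambda_0}}P_D(x,y)\,d\sigma_\varepsilon\le\frac{2}{c_{x_0}}P_D(x_0,y)$ for small $\varepsilon$, handle large $\varepsilon$ by Lemma~3.7 on a compact set, and then transfer to the given $\lambda$ via the constants in the proof of Lemma~4.2. The only cosmetic difference is that the paper bounds $P_D(x_0,y)$ by $C\delta(x_0)^{1-N}$ using Lemma~3.7, whereas you bound it by $\sup_{t\in\partial D}P_D(x_0,t)$ via continuity; both give a $y$-independent constant.
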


\begin{proof}
Fix $x_0\in D$ and let $\lambda_0(x)=-G_D(x_0,x)$. Then, as in the proof of Lemma 4.3 we conclude, that 
$\lambda_0$ is a characterizing function for $D$. Moreover, since $P_D(\cdot,y)$ are positive and harmonic on $D$ for every $y\in\partial D$, there exists $\varepsilon_1>0$ and  $M_{x_0}>0$ (which does not depend on $y$), such that 

\[
\sup_{0<\varepsilon<\varepsilon_1}\int_{\partial D^{\varepsilon}_{\lambda_0}}P_D(x,y)d\sigma_{\varepsilon}(x)\leq 
M_{x_0}P_D(x_0,y).
\]
\\
By Lemma 3.7, for some $C>0$ and every $x\in D$, $y\in\partial D$,
\[
	P_D(x,y)\leq\frac{C}{|x-y|^{N-1}}\leq\frac{C}{\delta(x)^{N-1}}.
\]
Therefore

\[
	\left\|P_D(\cdot,y)\right\|^{\lambda_0}_{\textit{h}^1}=
	\sup_{0<\varepsilon<\varepsilon_{\lambda_0}}\int_{\partial D^{\varepsilon}_{\lambda_0}}P_D(x,y)d\sigma_{\varepsilon}(x)
\]

\[
	\leq\sup_{0<\varepsilon<\varepsilon_1}\int_{\partial D^{\varepsilon}_{\lambda_0}}P_D(x,y)d\sigma_{\varepsilon}(x)+
	\sup_{\varepsilon_1\leq\varepsilon<\varepsilon_{\lambda_0}}
	\int_{\partial D^{\varepsilon}_{\lambda_0}}P_D(x,y)d\sigma_{\varepsilon}(x)
\]

\[
	\leq \frac{CM_{x_0}}{\delta(x_0)^{N-1}}+
	\frac{2C\sigma(\partial D)}{\dist(D^{\varepsilon_1}_{\lambda_0},\partial D)}=C_{\lambda_0}.
\]
\\

Now for any characterizing function $\lambda$, $\varepsilon_{\lambda}$ as before, we may choose, as in the proof 
of Lemma 4.2, $\varepsilon_2>0$, $\varepsilon_2<\min\left\{\varepsilon_{\lambda_0},\varepsilon_{\lambda}\right\}$, such that

\[
	\sup_{0<\varepsilon<\varepsilon_2}\int_{\partial D^{\varepsilon}_{\lambda}}P_D(x,y)d\sigma_{\varepsilon}(x)\leq
	M_1\left\|P_D(\cdot,y)\right\|^{\lambda_0}_{\textit{h}^1},
\]
\\
for some $M_1=M_1(\lambda,\lambda_0)>0$. Because the functions $P_D(\cdot,y)$ are uniformly bounded by some 
$M_2=M_2(\lambda)>0$ on the sets $\partial D^{\varepsilon}_{\lambda}$ for $\varepsilon_2\leq\varepsilon<\varepsilon_{\lambda}$, we have

\[
	\left\|P_D(\cdot,y)\right\|^{\lambda}_{\textit{h}^1}\leq M_1\left\|P_D(\cdot,y)\right\|^{\lambda_0}_{\textit{h}^1}
	+M_22\sigma(\partial D)\leq M_1C_{\lambda_0}+M_22\sigma(\partial D)=C_{\lambda}<\infty,
\]
\\
what gives the conclusion of the lemma.

\end{proof}

Recall, that for fixed characterizing function $\lambda$, $\varepsilon_{\lambda}>0$ as usually, the map
\[
	\pi_{\varepsilon}:=\pi|_{\partial D^{\varepsilon}_{\lambda}},\quad 0<\varepsilon<\varepsilon_{\lambda},
\]
is invertible. For $u$ harmonic on $D$ denote $u_{\varepsilon}(y)=u(\pi^{-1}_{\varepsilon}(y))$, $y\in\partial D$. 
If $f\in C(\partial D)$ and $u=P_D[f]$, then $u_{\varepsilon}\rightarrow f$ in $C(\partial D)$. Because $C(\partial D)$ is dense in $L^p(\partial D)$ for $1\leq p<\infty$, we have the following result on $L^p$-convergence.

\begin{lemma}
Suppose $1\leq p<\infty$. If $f\in L^p(\partial D)$ and $u=P_D[f]$, then 
\[
	\left\|u_{\varepsilon}-f\right\|_p\rightarrow0\quad as\quad \varepsilon\rightarrow0.
\]
\end{lemma}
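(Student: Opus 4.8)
The plan is to prove the $L^p$-convergence by a standard density argument, using the fact that $C(\partial D)$ is dense in $L^p(\partial D)$ and that the Poisson integral operator is, in an appropriate sense, uniformly bounded on the approximating surfaces. First I would fix $\varepsilon'>0$ with $\varepsilon'<\varepsilon_\lambda$ and recall that it suffices to control $\|u_\varepsilon - f\|_p$ for $0<\varepsilon<\varepsilon'$, since we only care about the limit as $\varepsilon\to 0$. Given $\eta>0$, choose $g\in C(\partial D)$ with $\|f-g\|_p<\eta$, and let $v=P_D[g]$. Then I would split
\[
	\|u_\varepsilon - f\|_p \leq \|u_\varepsilon - v_\varepsilon\|_p + \|v_\varepsilon - g\|_p + \|g - f\|_p.
\]
The middle term tends to $0$ as $\varepsilon\to 0$ because, as noted just before the statement, $v_\varepsilon\to g$ uniformly on $\partial D$ (this is the solvability of the Dirichlet problem together with continuity of $v$ on $\overline D$ and the fact that $\pi_\varepsilon^{-1}(y)\to y$), and uniform convergence on the finite-measure space $\partial D$ gives $L^p$-convergence. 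The third term is $<\eta$ by the choice of $g$.

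The main point is therefore to bound the first term $\|u_\varepsilon - v_\varepsilon\|_p$ by a constant times $\|f-g\|_p$, uniformly in $\varepsilon$. Write $w = u - v = P_D[f-g]$; then $w_\varepsilon(y) = w(\pi_\varepsilon^{-1}(y))$ and I want
\[
	\int_{\partial D}|w_\varepsilon(y)|^p\,d\sigma(y) \leq C\int_{\partial D}|f(y)-g(y)|^p\,d\sigma(y)
\]
with $C$ independent of $\varepsilon$. By the change of variables formula established in Section 4 (the transform $\phi_\varepsilon$, with $\phi_\varepsilon\leq 2$), the left side is comparable to $\int_{\partial D^\varepsilon_\lambda}|w(x)|^p\,d\sigma_\varepsilon(x)$, so it is enough to show $\|w\|^\lambda_{h^p}\leq C\|f-g\|_p$. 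For $p=1$ this is exactly what Lemma 4.4 provides: $|w(x)|\leq P_D[|f-g|](x)$, and by Fubini together with Lemma 4.4,
\[
	\int_{\partial D^\varepsilon_\lambda}|w|\,d\sigma_\varepsilon \leq \int_{\partial D}\Big(\int_{\partial D^\varepsilon_\lambda}P_D(x,y)\,d\sigma_\varepsilon(x)\Big)|f(y)-g(y)|\,d\sigma(y) \leq C_\lambda\|f-g\|_1.
\]
For $1<p<\infty$ I would instead use Jensen's inequality on the probability-type structure: $|w(x)|^p\leq P_D[\,|f-g|^p\,](x)$ (since $\int P_D(x,y)\,d\sigma(y)=1$), and then apply the same Fubini argument with Lemma 4.4 to $|f-g|^p\in L^1(\partial D)$, obtaining $\int_{\partial D^\varepsilon_\lambda}|w|^p\,d\sigma_\varepsilon \leq C_\lambda\|f-g\|_p^p$. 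Combining the three pieces gives $\limsup_{\varepsilon\to 0}\|u_\varepsilon - f\|_p \leq (C^{1/p}+1)\eta$ (with $C$ absorbing the factor $2$ from $\phi_\varepsilon$), and letting $\eta\to 0$ finishes the proof.

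I expect the only real subtlety to be the bookkeeping in the change of variables: one must make sure that the identity $\int_{\partial D^\varepsilon_\lambda} F\,d\sigma_\varepsilon = \int_{\partial D} F(\pi_\varepsilon^{-1}(y))\phi_\varepsilon(y)\,d\sigma(y)$ applies with $F=|w|^p$ and that $\phi_\varepsilon$ is bounded below away from $0$ as well as above by $2$, so that $\|w_\varepsilon\|_p^p$ and $\int_{\partial D^\varepsilon_\lambda}|w|^p\,d\sigma_\varepsilon$ are genuinely comparable in both directions; this was recorded in Section 4. Everything else is the routine density/triangle-inequality scheme, and the Fubini step requires only nonnegativity of the integrands, so no integrability issues arise.
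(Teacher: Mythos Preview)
Your proposal is correct and follows essentially the same route as the paper: the density/triangle-inequality scheme with $g\in C(\partial D)$, the uniform convergence $v_\varepsilon\to g$, and the bound on $\|u_\varepsilon-v_\varepsilon\|_p$ via Jensen, Fubini, and Lemma~4.4 together with the lower bound $\phi_\varepsilon\geq 1/2$ for small $\varepsilon$. The only cosmetic difference is that the paper applies Jensen directly to $\int_{\partial D}|w_\varepsilon|^p\,d\sigma$ and then invokes the change of variables inside the resulting $y$-integral, whereas you first convert to $\int_{\partial D^\varepsilon_\lambda}|w|^p\,d\sigma_\varepsilon$ and then apply Jensen; the two orderings are equivalent.
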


\begin{proof}
Choose $1\leq p<\infty$, $f\in L^p(\partial D)$ and let $u=P_D[f]$. Fix $\varepsilon>0$. 
Let $C_{\lambda}\geq1$ satisfy the assertion of Lemma 4.4, and choose $g\in C(\partial D)$, such that 
$\left\|f-g\right\|_p<\varepsilon/C_{\lambda}$. Let $v=P_D[g]$, and choose $\varepsilon_1>0$, such that 
$\left\|v_{\varepsilon'}-g\right\|_p<\varepsilon$ for every $0<\varepsilon'<\varepsilon_1$. Then we have 

\[
	\left\|u_{\varepsilon'}-f\right\|_p\leq \left\|u_{\varepsilon'}-v_{\varepsilon'}\right\|_p+
	\left\|v_{\varepsilon'}-g\right\|_p+\left\|f-g\right\|_p\leq\left\|u_{\varepsilon'}-v_{\varepsilon'}\right\|_p 
	+ 2\varepsilon.
\]
However
\[
	\left\|u_{\varepsilon'}-v_{\varepsilon'}\right\|^p_p=\int_{\partial D}
	\left|\int_{\partial D}P_D(\pi^{-1}_{\varepsilon'}(y),z)(f(z)-g(z))d\sigma(z)\right|^pd\sigma(y)
\]

\[
	\leq\int_{\partial D}\int_{\partial D}P_D(\pi^{-1}_{\varepsilon'}(y),z)|f(z)-g(z)|^pd\sigma(z)d\sigma(y),
\]

\[
	=\int_{\partial D}\int_{\partial D}P_D(\pi^{-1}_{\varepsilon'}(y),z)d\sigma(y)|f(z)-g(z)|^pd\sigma(z),
\]
\\
by Jensen inequality and Fubini's theorem. Now we may choose $\varepsilon_2>0$, $\varepsilon_2<\varepsilon_1$, such that 
for every $0<\varepsilon'<\varepsilon_2$, and every $z\in\partial D$ we have

\[
	\int_{\partial D}P_D(\pi^{-1}_{\varepsilon'}(y),z)d\sigma(y)\leq
	2\int_{\partial D}P_D(\pi^{-1}_{\varepsilon'}(y),z)\phi_{\varepsilon'}(y)d\sigma(y)
\]

\[
	=2\int_{\partial D^{\varepsilon'}_{\lambda}}P_D(x,z)d\sigma_{\varepsilon'}(x)\leq 
	2\left\|P_D(\cdot,z)\right\|^{\lambda}_{\textit{h}^1}\leq 2C_{\lambda}.
\]
Therefore
\[
	\left\|u_{\varepsilon'}-v_{\varepsilon'}\right\|_p=\left(\int_{\partial D}\int_{\partial D}
	P_D(\pi^{-1}_{\varepsilon'}(y),z)d\sigma(y)|f(z)-g(z)|^pd\sigma(z)\right)^{\frac{1}{p}}
\]

\[
	\leq (2C_{\lambda})^{\frac{1}{p}}\left\|f-g\right\|_p<2\varepsilon,
\]
and hence $\left\|u_{\varepsilon'}-f\right\|_p< 4\varepsilon$, for every $0<\varepsilon'<\varepsilon_2$. Since 
$\varepsilon$ is arbitrary, we have 
\[
	\left\|u_{\varepsilon}-f\right\|_p\stackrel{\varepsilon\rightarrow0}{\longrightarrow}0,
\] 
as desired.

\end{proof}

As we have seen before, if $1\leq p\leq\infty$ and $f\in L^p(\partial D)$, then ${P_D[f]\in\textit{h}^p(D)}$. 
To the end of this section we show, that for $p>1$ each harmonic function from the space $\textit{h}^p(D)$ can 
be characterized in terms of the Poisson kernel for $D$.

\begin{theorem}
Suppose $1<p\leq\infty$ and let $u\in\textit{h}^p(D)$. Then there exists $f\in L^p(\partial D)$, such that $u=P_D[f]$. Moreover, if $\lambda$ is a characterizing function for $D$, then for some positive constant 
$\widetilde{C}=\widetilde{C}({\lambda},p)$ we have
\[
	\left\|f\right\|_p\leq \left\|u\right\|^{\lambda}_{\textit{h}^p}\leq\widetilde{C}\left\|f\right\|_p.
\]
\end{theorem}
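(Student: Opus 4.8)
The plan is to exploit the weak$^*$ compactness of bounded sets in $L^p(\partial D)$ for $p>1$ (with the convention that for $p=\infty$ we use weak$^*$ compactness in $L^\infty(\partial D)=(L^1(\partial D))^*$, and for $1<p<\infty$ we use reflexivity, i.e. weak compactness in $L^p$). Fix a characterizing function $\lambda$ and $\varepsilon_\lambda>0$ as above, and let $u\in h^p(D)$ with $\|u\|^\lambda_{h^p}=M$. For each $0<\varepsilon<\varepsilon_\lambda$ define $u_\varepsilon(y)=u(\pi^{-1}_\varepsilon(y))$ on $\partial D$, as before Lemma 4.5. Since $\sigma_\varepsilon$ is comparable to $\sigma$ via the transform $\phi_\varepsilon\le 2$ (and $\phi_\varepsilon\to 1$), the functions $u_\varepsilon$ satisfy $\int_{\partial D}|u_\varepsilon|^p\,d\sigma = \int_{\partial D^\varepsilon_\lambda}|u|^p\phi_\varepsilon^{-1}\,d\sigma_\varepsilon \le \int_{\partial D^\varepsilon_\lambda}|u|^p\,d\sigma_\varepsilon \le M^p$ once $\phi_\varepsilon\ge 1$ (for $p=\infty$, $\|u_\varepsilon\|_\infty\le M$ directly). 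Hence $\{u_\varepsilon\}$ is bounded in $L^p(\partial D)$. Pick a sequence $\varepsilon_n\to 0$ so that $u_{\varepsilon_n}\to f$ weakly (weak$^*$ if $p=\infty$) for some $f\in L^p(\partial D)$; lower semicontinuity of the norm under weak (weak$^*$) convergence gives $\|f\|_p\le\liminf_n\|u_{\varepsilon_n}\|_p\le M=\|u\|^\lambda_{h^p}$, which is the left-hand inequality.

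Next I would identify $u$ with $P_D[f]$. Fix $x\in D$ and choose $n$ large enough that $x\in D^{\varepsilon_n}_\lambda$. Applying the Poisson representation on the smooth domain $D^{\varepsilon_n}_\lambda$ (whose boundary is $\partial D^{\varepsilon_n}_\lambda$ and whose Poisson kernel is $P_{D^{\varepsilon_n}_\lambda}$) we get
\[
	u(x)=\int_{\partial D^{\varepsilon_n}_\lambda}P_{D^{\varepsilon_n}_\lambda}(x,\xi)u(\xi)\,d\sigma_{\varepsilon_n}(\xi)
	=\int_{\partial D}P_{D^{\varepsilon_n}_\lambda}(x,\pi^{-1}_{\varepsilon_n}(y))\,u_{\varepsilon_n}(y)\,\phi_{\varepsilon_n}(y)\,d\sigma(y).
\]
The key point is that, for $x$ fixed, the kernel $K_n(y):=P_{D^{\varepsilon_n}_\lambda}(x,\pi^{-1}_{\varepsilon_n}(y))\,\phi_{\varepsilon_n}(y)$ converges in $L^q(\partial D)$ (the conjugate exponent, $q$ finite since $p>1$) to $P_D(x,y)$ as $n\to\infty$. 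Granting this, the integral above converges to $\int_{\partial D}P_D(x,y)f(y)\,d\sigma(y)=P_D[f](x)$ because it is a pairing of an $L^q$-convergent sequence with an $L^p$-weakly convergent sequence; hence $u(x)=P_D[f](x)$ for every $x\in D$.

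For the right-hand inequality $\|u\|^\lambda_{h^p}\le\widetilde C\|f\|_p$, I would argue as in the discussion following Lemma 4.3 together with Lemma 4.4. For $p=\infty$, $\|P_D[f]\|_{h^\infty}\le\|f\|_\infty$ directly from $\int_{\partial D}P_D(x,y)\,d\sigma(y)=1$. For $1<p<\infty$, Jensen's inequality gives $|u(x)|^p\le P_D[|f|^p](x)$, and then for $0<\varepsilon<\varepsilon_\lambda$,
\[
	\int_{\partial D^\varepsilon_\lambda}|u(x)|^p\,d\sigma_\varepsilon(x)\le\int_{\partial D^\varepsilon_\lambda}P_D[|f|^p](x)\,d\sigma_\varepsilon(x)
	=\int_{\partial D}\Bigl(\int_{\partial D^\varepsilon_\lambda}P_D(x,z)\,d\sigma_\varepsilon(x)\Bigr)|f(z)|^p\,d\sigma(z),
\]
using Fubini, and the inner integral is $\le 2\|P_D(\cdot,z)\|^\lambda_{h^1}\le 2C_\lambda$ by the comparison $\sigma_\varepsilon\le 2\sigma$ (after transport by $\phi_\varepsilon$) and Lemma 4.4, uniformly in $\varepsilon$ and $z$. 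Thus $\|u\|^\lambda_{h^p}\le(2C_\lambda)^{1/p}\|f\|_p$, so we may take $\widetilde C=\max\{1,(2C_\lambda)^{1/p}\}$.

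\textbf{Main obstacle.} The crux is the $L^q(\partial D)$-convergence of the transported Poisson kernels $P_{D^{\varepsilon_n}_\lambda}(x,\pi^{-1}_{\varepsilon_n}(\cdot))\phi_{\varepsilon_n}(\cdot)\to P_D(x,\cdot)$ for fixed $x\in D$. Pointwise convergence should follow from continuity of $G_D$ and its normal derivative up to the boundary (property 1 of the Green's function), since $D^{\varepsilon_n}_\lambda\uparrow D$ and the defining data vary continuously; the uniform control needed to upgrade to $L^q$ convergence should come from a uniform bound $P_{D^{\varepsilon_n}_\lambda}(x,\cdot)\le C/\dist(x,\partial D^{\varepsilon_n}_\lambda)^{N-1}$ of the type in Lemma 3.7 (applied with the ball condition, which holds uniformly for the $D^{\varepsilon}_\lambda$ with $\varepsilon$ small), together with dominated convergence. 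One must also check that $\dist(x,\partial D^{\varepsilon_n}_\lambda)$ stays bounded below (true, since it increases to $\delta(x)>0$), so the dominating function is a fixed multiple of $|x-\,\cdot\,|^{-(N-1)}$ composed with $\pi^{-1}_{\varepsilon_n}$, which is itself dominated uniformly once $n$ is large. Assembling these uniform estimates — essentially a quantitative version of the convergence $D^\varepsilon_\lambda\to D$ at the level of Poisson kernels — is where the real work lies; everything else is the weak-convergence bookkeeping sketched above.
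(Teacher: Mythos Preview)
Your strategy is sound and can be made into a correct proof, but the paper takes a somewhat different route, and in the identification step $u=P_D[f]$ it uses a trick that dissolves precisely the obstacle you flag.

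To produce $f$, you pull back $u$ along $\pi_\varepsilon^{-1}$ and extract a single weak(-$*$) limit in $L^p(\partial D)$; the paper instead covers $D$ by finitely many $C^2$ subdomains $D_j$ that can be translated inward along fixed directions $\nu_j$, extracts a weak-$*$ limit $f_j\in L^p(\partial D_j)$ from the translates $u(\cdot-\varepsilon\nu_j)$ on each $\partial D_j$, and patches the $f_j$ together using Lemma~4.5 to obtain $f\in L^p(\partial D)$. Your construction is more direct.

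The substantive difference is in showing $u=P_D[f]$. You need $L^q$-convergence of the transported kernels $P_{D^{\varepsilon}_\lambda}(x,\pi_\varepsilon^{-1}(\cdot))\,\phi_\varepsilon\to P_D(x,\cdot)$, which is a domain-perturbation statement about normal derivatives of Green's functions; it is true, but proving it needs boundary gradient estimates (e.g.\ Schauder-type input) not developed in the paper. The paper sidesteps this entirely by choosing, for each fixed $x_0\in D$, the \emph{specific} characterizing function $\lambda_0(\cdot)=-G_D(x_0,\cdot)$. Then $G_{D^\varepsilon_{\lambda_0}}(x_0,\cdot)=G_D(x_0,\cdot)-\varepsilon$, so $P_{D^\varepsilon_{\lambda_0}}(x_0,\cdot)=|\nabla G_D(x_0,\cdot)|$ on the level set $\{\lambda_0=-\varepsilon\}$: the approximating Poisson kernels are all restrictions of a \emph{single} continuous function, and their uniform convergence to $P_D(x_0,\cdot)$ is immediate from the $C^1$-regularity of $G_D(x_0,\cdot)$ up to $\partial D$. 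This, combined with the $L^p$-convergence $u_\varepsilon\to f$ from Lemma~4.5 (applied locally on each $\partial D\cap\partial D_j$), gives $u(x_0)=P_D[f](x_0)$ with no kernel-perturbation analysis.

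For the norm inequalities your argument matches the paper's (Jensen plus Lemma~4.4 for the upper bound; for the lower bound the paper uses the $L^p$-convergence of Lemma~4.5 rather than weak lower semicontinuity, but either works). One small correction: $\phi_\varepsilon\to 1$ uniformly does not give $\phi_\varepsilon\ge 1$; use $\phi_\varepsilon\ge 1-\eta$ for $\varepsilon$ small and send $\eta\to 0$ after passing to the weak limit to recover the sharp inequality $\|f\|_p\le\|u\|^\lambda_{h^p}$.
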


\begin{proof}
Let $\left\{D_j\right\}$ be a finite cover of $D$ with the following properties:
\begin{enumerate}
	\item $D=\bigcup D_j$.
	\item For every $j$, $D_j$ is a domain with the boundary of class $C^2$.
	\item For every $j$, $\partial D_j\cap\partial D$ is a $N-1$ dimensional manifold with boundary.
	\item There exists $\varepsilon_0>0$ and a vector $\nu_j=\nu_{y_j}$ for some $y_j\in\partial D_j\cap\partial D$, so that
	$\overline{D}_j-\varepsilon\nu_j\subset D$ for every $0<\varepsilon<\varepsilon_0$.
\end{enumerate}

Let $P_j(x,y)$ be the Poisson kernel for $D_j$. Because for every $0<\varepsilon<\varepsilon_0$, the functions $u^j_{\varepsilon}(x)=u(x-\varepsilon\nu_j)$ are harmonic on $D_j$ and continuous on $\overline{D}_j$, we have
\[
	u^j_{\varepsilon}(x)=\int_{\partial D_j}P_j(x,y)u^j_{\varepsilon}(y)d\sigma_j(y),\quad x\in D_j.
\]
Moreover, in view of Lemma 4.2,
\[
	\sup_{0<\varepsilon<\varepsilon_0}\int_{\partial D_j}|u^j_{\varepsilon}(y)|^pd\sigma_j(y)<\infty.
\]

By Banach-Alaoglu theorem, there exists a subsequence $u^j_{\varepsilon_k}$, that converges weak$^{\ast}$ to some 
$f_j\in L^p(\partial D_j)$. Hence, if $q$ is conjugate to $p$, then obviously $P_j(x,\cdot)\in L^q(\partial D_j)$, and we have 
\[
	u(x)=\int_{\partial D_j}P_j(x,y)f_j(y)d\sigma_j(y),\quad x\in D_j.
\]

Now observe, that if $\lambda_j,\lambda_k$ are the characterizing functions for the domains $D_j,D_k$ respectively, 
and $\partial D_j\cap\partial D_k$ contains some open subset of $\partial D$, then $\lambda_j,\lambda_k$ are comparable 
on the set 
\[
	\left\{y-t\nu_y:y\in\partial D_j\cap\partial D_k\cap\partial D,0<t<t_0\right\},
\]
for $t_0$ sufficiently small. Hence, by (a small modification of) Lemma 4.5, $f_j=f_k$ a.e. (with respect to $\sigma$) in 
$\partial D_j\cap\partial D_k\cap\partial D$. Thus $f\equiv f_j$ on $\partial D_j\cap\partial D$ is well defined; obviously, 
$f\in L^p(\partial D)$. 

It remains to be shown, that $u=P_D[f]$. So fix $x_0\in D$, and let ${\lambda(x)=G_D(x_0,x)}$. As in the proof of Lemma 4.3 we conclude, that

\[
	P_{D^{\varepsilon}_{\lambda}}(x_0,x)=|\nabla_x G_{D^{\varepsilon}_{\lambda}}(x_0,x)|=|\nabla_x G_D(x_0,x)|,\quad 
	x\in\partial D^{\varepsilon}_{\lambda},
\]
and
\[
	u(x_0)=\int_{\partial D^{\varepsilon}_{\lambda}}P_{D^{\varepsilon}_{\lambda}}(x_0,x)u(x)d\sigma_{\varepsilon}(x)=
	\int_{\partial D^{\varepsilon}_{\lambda}}|\nabla_x G_D(x_0,x)|u(x)d\sigma_{\varepsilon}(x)
\]

\[
	=\int_{\partial D^{\varepsilon}_{\lambda}}|\nabla\lambda(x)|u(x)d\sigma_{\varepsilon}(x).
\]
Moreover,

\[
	\int_{\partial D^{\varepsilon}_{\lambda}}|\nabla\lambda(x)|u(x)d\sigma_{\varepsilon}(x)=
	\int_{\partial D}|\nabla\lambda(\pi^{-1}_{\varepsilon}(y))|u(\pi^{-1}_{\varepsilon}(y))\phi_{\varepsilon}(y)d\sigma(y).
\]
\\
Hence we have

\[
	u(x_0)=\int_{\partial D}|\nabla\lambda(\pi^{-1}_{\varepsilon}(y))|u(\pi^{-1}_{\varepsilon}(y))\phi_{\varepsilon}(y)d\sigma(y)
\]

\[
	=\int_{\partial D}\left(|\nabla\lambda(\pi^{-1}_{\varepsilon}(y))|\phi_{\varepsilon}(y)-
	|\nabla_yG_D(x_0,y)\right)u(\pi^{-1}_{\varepsilon}(y))d\sigma(y)
\]

\[
	+\int_{\partial D}P_D(x_0,y)u(\pi^{-1}_{\varepsilon}(y))d\sigma(y)=I_1+I_2.
\]
\\
Now since $|\nabla\lambda(\pi^{-1}_{\varepsilon}(y))|\rightarrow |\nabla_y G_D(x_0,y)|$, $\phi_{\varepsilon}\rightarrow1$ 
uniformly on $\partial D$ as $\varepsilon\rightarrow0$, and $u\in\textit{h}^p(D)\subset\textit{h}^1(D)$, we have 
$I_1\stackrel{\varepsilon}{\rightarrow}0$, and it suffices to show that $I_2\stackrel{\varepsilon}{\rightarrow} P_D[f](x_0)$. For fixed $j$ we have 

\[
	\left|\int_{\partial D\cap\partial D_j}P_D(x_0,y)u(\pi^{-1}_{\varepsilon}(y))d\sigma(y)-
	\int_{\partial D\cap\partial D_j}P_D(x_0,y)f(y)d\sigma(y)\right|
\]

\[
	\leq\frac{C}{\delta(x_0)^{N-1}}\int_{\partial D\cap\partial D_j}|u(\pi^{-1}_{\varepsilon}(y))-f(y)|d\sigma(y),
\]
\\
by Lemma 3.7. Thus if $\lambda_j$ is a characterizing function for $D_j$, then $\lambda$ and $\lambda_j$ are comparable on the set $\pi^{-1}_{\varepsilon}(\partial D\cap\partial D_j)\subset D_j$ (independently on $\varepsilon$), and by the use of Lemma 4.5 we obtain 
\[
	\int_{\partial D\cap\partial D_j}|u(\pi^{-1}_{\varepsilon}(y))-f(y)|d\sigma(y)
	\stackrel{\varepsilon\rightarrow0}{\longrightarrow}0.
\]
Because $\partial D=\bigcup\partial D_j\cap\partial D$ (and the sum is finite), we have 
$I_2\stackrel{\varepsilon}{\rightarrow} P_D[f](x_0)$. Therefore $u=P_D[f]$. 

Now let $\lambda'$ be a characterizing function for $D$ (which is not necessarily $\lambda$). Suppose first, that 
$p<\infty$. Then
 
\[
	\int_{\partial D^{\varepsilon}_{\lambda'}}|u(x)|^pd\sigma_{\varepsilon}(x)=
	\int_{\partial D}|u(\pi^{-1}_{\varepsilon}(y))|^p\phi_{\varepsilon}(y)d\sigma(y)
	\stackrel{\varepsilon\rightarrow0}{\longrightarrow}\left\|f\right\|^p_p,
\]
\\
by Lemma 4.5. Therefore $\left\|u\right\|^{\lambda}_{\textit{h}^p}\geq\left\|f\right\|_p$. On the other hand, 

\[
	\int_{\partial D^{\varepsilon}_{\lambda'}}|u(x)|^pd\sigma_{\varepsilon}(x)=
	\int_{\partial D^{\varepsilon}_{\lambda'}}\left|\int_{\partial D}P_D(x,y)f(y)d\sigma(y)\right|^pd\sigma_{\varepsilon}(x)
\]

\[
	\leq\int_{\partial D}\int_{\partial D^{\varepsilon}_{\lambda'}}P_D(x,y)d\sigma_{\varepsilon}(x)|f(y)|^pd\sigma(y)
	\leq C_{\lambda'}\left\|f\right\|^p_p,
\]
\\
by Lemma 4.4. Thus $\left\|u\right\|^{\lambda'}_{\textit{h}^p}\leq\left(C_{\lambda'}\right)^{1/p}\left\|f\right\|_p$ 
(obviously, the estimation occurs even if $f\in L^1(\partial D)$). 

If $p=\infty$, then $\left\|u\right\|_{\textit{h}^{\infty}}=\sup|u|=\sup|P_D[f]|\leq\left\|f\right\|_{\infty}$. 
By previous computations, $\left\|u\right\|^{\lambda'}_{\textit{h}^q}\geq\left\|f\right\|_q$ 
for $1\leq q<\infty$. Since $\left\|u\right\|^{\lambda'}_{\textit{h}^q}\rightarrow\left\|u\right\|_{\textit{h}^{\infty}}$ 
as $q\rightarrow\infty$, we then get $\left\|u\right\|_{\textit{h}^{\infty}}=\left\|f\right\|_{\infty}$. That finishes the proof of the theorem.
\\
\end{proof}

\section{The Fatou Theorem}\medskip
\markright{The Fatou Theorem}

\bigskip

In this section, by the use of \cite{K} and \cite{S2}, we will prove the known property of the Poisson integrals on $D$, a so called "nontangential" convergence.

For each $y\in\partial D$ and $\alpha>0$ we define the "cone" of aperture $\alpha$ and vertex $y$ as

\[
	\Gamma_{\alpha}(y)=\left\{x\in D:|x-y|<(1+\alpha)\delta(x)\right\}.
\]
\\
Obviously $\Gamma_{\alpha}(y)\subset\Gamma_{\beta}(y)$ for $\alpha<\beta$, and $\bigcup_{\alpha>0}\Gamma_{\alpha}(y)=D$. 
We say that a function $u$ on $D$ has a $nontangential$ $limit$ $L$ at $y\in\partial D$ if, for every $\alpha>0$, 
$u(x)\rightarrow L$ as $x\rightarrow y$ within $\Gamma_{\alpha}(y)$.

\begin{theorem}
Suppose $u=P_D[f]$, where ${f\in L^{1}(\partial D)}$. Then $u$ has a nontangential limit at almost 
every point of $\partial D$ and
\[
	\lim_{\Gamma_{\alpha}(y)\ni x\to y}u(x)=f(y)\quad for\quad a.e.\quad y\in\partial D.
\]
\end{theorem}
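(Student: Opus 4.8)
The standard route is to pass from the domain $D$ to the model case (ball or half-space), where the Fatou theorem is already known, using the local charts and the orthogonal projection $\pi$ constructed in Section~3, together with the pointwise bound on $P_D$ from Lemma~3.7 and the $h^1$-bound on the Poisson kernels from Lemma~4.4. First I would reduce to showing that the maximal function
\[
  u^*_\alpha(y)=\sup_{x\in\Gamma_\alpha(y)}|u(x)|
\]
is of weak type $(1,1)$ with respect to $\sigma$, i.e. $\sigma\{y:u^*_\alpha(y)>\tau\}\le C_\alpha\|f\|_1/\tau$ for all $\tau>0$. Granting this, the theorem follows by the usual density argument: for $g\in C(\partial D)$ the function $P_D[g]$ extends continuously to $\overline D$ (solvability of the Dirichlet problem, Section~3) and hence has the correct nontangential limit $g$ everywhere; writing $f=g+(f-g)$ with $\|f-g\|_1$ small, the bad set where $\limsup_{\Gamma_\alpha(y)\ni x\to y}|u(x)-f(y)|>\lambda$ is contained in $\{(f-g)^*_\alpha>\lambda/2\}\cup\{|f-g|>\lambda/2\}$, whose measure is $\le C\|f-g\|_1/\lambda$; letting $g\to f$ in $L^1$ and then $\lambda\to 0$ kills it. One also needs the result for complex measures to be reduced to $L^1$ data, but since the statement is only for $f\in L^1(\partial D)$ this is not required here.

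The core is therefore the weak-type estimate for $u^*_\alpha$, and the key step is to dominate $u^*_\alpha(y)$ by the Hardy--Littlewood maximal function of $f$ on $\partial D$, namely
\[
  Mf(y)=\sup_{r>0}\frac{1}{\sigma(K(y,r))}\int_{K(y,r)}|f|\,d\sigma .
\]
Fix $x\in\Gamma_\alpha(y)$, set $r=\delta(x)$, $y'=\pi(x)$, so $|x-y|<(1+\alpha)r$ and $|y'-y|\le|y'-x|+|x-y|<(2+\alpha)r$. For $z\in\partial D$ I split the Poisson integral over the annuli $A_k=K(y,2^{k+1}r)\setminus K(y,2^kr)$: on $A_0=K(y,2r)$ I use the crude bound $P_D(x,z)\le C|x-z|^{-(N-1)}\le C'r^{-(N-1)}$ (here $|x-z|\ge\delta(x)=r$), together with $\sigma(K(y,2r))\le c_2(2r)^{N-1}$ from Lemma~3.3, which gives a contribution $\lesssim Mf(y)$; on $A_k$ with $k\ge1$ one has $|x-z|\ge|y-z|-|x-y|\ge(2^k-(1+\alpha))r\gtrsim 2^k r$ for $k$ large, so $P_D(x,z)\le C(2^kr)^{-(N-1)}$ and $\int_{A_k}P_D(x,z)|f(z)|\,d\sigma(z)\lesssim 2^{-(N-1)k}\,2^{-(N-1)k}(2^{k}r)^{N-1}\cdot\frac{1}{\sigma(K(y,2^{k+1}r))}\int_{K(y,2^{k+1}r)}|f|\,d\sigma$, wait—more cleanly: $\int_{A_k}P_D(x,z)|f|\,d\sigma\le C(2^kr)^{-(N-1)}\int_{K(y,2^{k+1}r)}|f|\,d\sigma\le C'(2^kr)^{-(N-1)}\sigma(K(y,2^{k+1}r))\,Mf(y)\lesssim 2^{-(N-1)}\!\cdot\! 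Mf(y)$ after using Lemma~3.3 again, and summing the geometric series in $k$ yields $|u(x)|\le C_\alpha Mf(y)$, hence $u^*_\alpha(y)\le C_\alpha Mf(y)$. The weak-type $(1,1)$ bound for $M$ on the space $(\partial D,\sigma,|\cdot|)$ is the Vitali covering argument, which is available because Lemma~3.3 says $\sigma$ is Ahlfors $(N-1)$-regular; thus $(\partial D,\sigma)$ is a space of homogeneous type and the Hardy--Littlewood--Wiener theorem applies.

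The main obstacle, and the one deserving the most care, is the annular decay estimate for the Poisson kernel when $z$ is far from $y$ but $x$ is close to $y$: Lemma~3.7 gives $P_D(x,z)\le C|x-z|^{-(N-1)}$, and one must check that in the cone $\Gamma_\alpha(y)$ the geometry forces $|x-z|\gtrsim_\alpha\operatorname{dist}(y,z)+\delta(x)$ uniformly, so that the sum over dyadic annuli converges with a constant depending only on $\alpha$ (and on $D$ through the constants of Lemmas~3.3 and~3.7). The remaining points—that $P_D[g]$ is continuous up to $\overline D$ for continuous $g$, that the orthogonal projection is well behaved on $D_{r_0}$ (Lemma~3.4), and that the Hardy--Littlewood maximal operator is weak $(1,1)$ on a $(N-1)$-regular boundary—are either quoted from the earlier sections or are classical, and I would invoke them without reproving them.
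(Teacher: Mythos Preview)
Your overall strategy coincides with the paper's: Lemma~5.2 is the weak-type $(1,1)$ bound for $M$ via the Wiener covering of Lemma~5.1 and the Ahlfors regularity of Lemma~3.3, Lemma~5.3 is the pointwise domination $\sup_{\Gamma_\alpha(y)}|P_D[f]|\le C_\alpha M[f](y)$ obtained by a dyadic annular decomposition together with Lemma~3.7, and the theorem is then deduced by the density argument with $g\in C(\partial D)$, exactly as you outline. No passage to a model domain is used.

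There is, however, a real gap in your annular step. From Lemma~3.7 you only have $P_D(x,z)\le C|x-z|^{-(N-1)}$; on $A_k$ this gives $P_D(x,z)\lesssim(2^kr)^{-(N-1)}$, and since $\sigma\bigl(K(y,2^{k+1}r)\bigr)\asymp(2^{k+1}r)^{N-1}$ by Lemma~3.3, the $k$-th annulus contributes
\[
\int_{A_k}P_D(x,z)|f|\,d\sigma\ \lesssim\ (2^kr)^{-(N-1)}(2^{k+1}r)^{N-1}\,Mf(y)\ =\ 2^{N-1}\,Mf(y),
\]
a bound \emph{independent of $k$}; there is no geometric series, and because the number of nonempty annuli is $\sim\log\bigl(\diam(D)/r\bigr)$ with $r=\delta(x)\to 0$, the sum is not uniformly bounded. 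Your own line ends with ``$\lesssim 2^{-(N-1)}\cdot Mf(y)$'', a constant, and then invokes a geometric series that is not there. What is actually required is the sharper estimate
\[
P_D(x,z)\ \le\ \frac{C\,\delta(x)}{|x-z|^{N}},
\]
valid on $C^{1,1}$ domains; with it the $k$-th annulus yields $\lesssim \delta(x)\,(2^kr)^{-N}(2^kr)^{N-1}Mf(y)\le 2^{-k}Mf(y)$, and the sum converges as intended. This stronger decay is not the statement of Lemma~3.7, but it follows by combining the exterior-ball comparison in that proof with the interior-ball condition of Lemma~3.2 (or can be quoted from \cite{K}); indeed, in the paper's own Lemma~5.3 the exponent $N$ rather than $N-1$ appears at the crucial line, which is exactly this sharper bound. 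You should state and justify $P_D(x,z)\le C\delta(x)|x-z|^{-N}$ explicitly before running the annular decomposition.
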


A key tool in the proof of Theorem 5.1 is the use of the Hardy-Littlewood maximal functions. For any 
$f\in L^{1}(\partial D)$ we define 

\[
	M[f](y)=\sup_{r>0}\frac{1}{\sigma(K(y,r))}\int_{K(y,r)}|f(z)|d\sigma(z),
\]
\\
the maximal function of $f$.

\begin{lemma}(Wiener)\\
For positive integer $k$, let $F\subset\RR^k$ be a compact set that is covered by the open balls 
$\left\{B_{\alpha}\right\}_{\alpha\in A}$, $B_{\alpha}=B(c_{\alpha},r_{\alpha})$. There is a disjoint, finite subcover 
$B_{\alpha_1}$, $B_{\alpha_2}$,..., such that 
\[
	F\subset\bigcup_i B(c_{\alpha_i},3r_{\alpha_i}).
\]
\end{lemma}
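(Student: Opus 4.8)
The plan is to prove this classical Vitali-type covering lemma by the standard greedy selection of balls with maximal radius. First I would reduce to a finite subcover: since $F$ is compact and the $\{B_\alpha\}_{\alpha\in A}$ form an open cover, extract a finite subcover $B_1,\dots,B_n$ (relabeling to lighten notation). The radii $r_1,\dots,r_n$ are finitely many positive numbers, so I may choose $B_{\alpha_1}$ to be one with the largest radius. Having chosen $B_{\alpha_1},\dots,B_{\alpha_k}$, I discard all remaining balls from the finite list that intersect $\bigcup_{i\le k}B_{\alpha_i}$, and among the survivors (if any remain) pick one of maximal radius as $B_{\alpha_{k+1}}$. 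The process terminates after finitely many steps because the list is finite; this produces a pairwise disjoint finite family $B_{\alpha_1},B_{\alpha_2},\dots,B_{\alpha_m}$.

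The heart of the argument is the inclusion $F\subset\bigcup_i B(c_{\alpha_i},3r_{\alpha_i})$. Fix $x\in F$. Since $B_1,\dots,B_n$ cover $F$, there is some $B_j=B(c_j,r_j)$ with $x\in B_j$. If $B_j$ is one of the selected balls we are done. Otherwise $B_j$ was discarded at some stage, meaning it intersects some selected ball $B_{\alpha_i}$ that was already chosen at the time of discarding — and crucially, at that moment $B_{\alpha_i}$ had been picked as a ball of maximal radius among those still available, so $r_{\alpha_i}\ge r_j$. Pick a point $w\in B_j\cap B_{\alpha_i}$. Then by the triangle inequality
\[
	|x-c_{\alpha_i}|\leq |x-c_j|+|c_j-w|+|w-c_{\alpha_i}|< r_j+r_j+r_{\alpha_i}\leq 3r_{\alpha_i},
\]
so $x\in B(c_{\alpha_i},3r_{\alpha_i})$, as required.

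The main (and essentially only) subtlety is bookkeeping: one must be careful that when $B_j$ is discarded, the selected ball it meets was chosen to have radius at least as large as $r_j$. This is exactly guaranteed by the greedy rule — a ball is removed only upon meeting a \emph{previously} selected ball, and each newly selected ball is maximal among those surviving at that step, hence in particular has radius $\ge r_j$ since $B_j$ was still surviving just before that selection. Everything else — finiteness of the process, disjointness of the chosen family, and the triangle inequality estimate — is routine. No compactness beyond the reduction to a finite subcover is needed, and the factor $3$ is exactly what the two copies of $r_j\le r_{\alpha_i}$ plus the one copy of $r_{\alpha_i}$ produce.
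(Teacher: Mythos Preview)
Your proof is correct and follows essentially the same approach as the paper: reduce to a finite cover by compactness, greedily select balls of maximal radius among those disjoint from the previously chosen ones, and then use the radius comparison together with the triangle inequality to show that every unselected ball (hence every point of $F$) lies in the tripled version of some selected ball. The only cosmetic difference is that the paper phrases the final step as an inclusion of balls ($B_\alpha\subset B(c_{\alpha_{i_0}},3r_{\alpha_{i_0}})$) rather than tracking an individual point $x$ through an intermediate point $w$, but the underlying estimate is identical.
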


\begin{proof}
Since $F$ is compact, we may assume that $\left\{B_{\alpha}\right\}_{\alpha\in A}$ is finite. Let $B_{\alpha_1}$ be the ball in this 
collection, that has the largest radius. Let $B_{\alpha_2}$ be the ball that is disjoint from $B_{\alpha_1}$ and has the greatest radius, and 
so on. The process ends in finitely many steps. We claim that the $B_{\alpha_i}$ chosen above satisfy the conclusion of the lemma.

It is enough to show that $B_{\alpha}\subset\bigcup_i B(c_{\alpha_i},3r_{\alpha_i})$ for every $\alpha$. Fix an $\alpha$. If $\alpha=\alpha_i$ for some $i$, then we are done. If $\alpha\notin\left\{\alpha_i\right\}$, let $\alpha_{i_0}$ be the first index with $B_{\alpha}\cap B_{\alpha_i}\neq\emptyset$ 
(there must be one, or else the process would not have stopped). Hence 
$r_{\alpha}\leq r_{\alpha_{i_0}}$; otherwise, we selected $B_{\alpha_{i_0}}$ incorrectly. But then clearly 
$B(c_{\alpha},r_{\alpha})\subset B(c_{\alpha_{i_0}},3r_{\alpha_{i_0}})$, as desired.
\\
\end{proof}

\begin{lemma}
Suppose $f\in L^{1}(\partial D)$. There exists a positive constant $C=C(N,D)$, such that
\[
	\sigma\left\{y\in\partial D:M[f](y)>t\right\}\leq\frac{C\left\|f\right\|_{L^{1}(\partial D)}}{t},\quad\forall t>0.
\]	
\end{lemma}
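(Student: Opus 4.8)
The plan is to prove the weak-type $(1,1)$ estimate for the maximal operator $M$ by the standard Vitali-covering argument, using Lemma 5.2 (Wiener's covering lemma) as the main combinatorial tool and Lemma 3.3 (the surface-measure doubling estimate $c_1 r^{N-1}\le\sigma(K(y,r))\le c_2 r^{N-1}$) to control how the measure behaves under the $3$-fold dilation of the covering balls.

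Fix $t>0$ and set $E_t=\{y\in\partial D:M[f](y)>t\}$. For each $y\in E_t$ there is, by definition of $M[f]$, a radius $r_y>0$ with
\[
\frac{1}{\sigma(K(y,r_y))}\int_{K(y,r_y)}|f|\,d\sigma>t,
\quad\text{i.e.}\quad
\sigma(K(y,r_y))<\frac{1}{t}\int_{K(y,r_y)}|f|\,d\sigma.
\]
I would first note that the radii $r_y$ are bounded (one may take $r_y\le\diam(D)$, since larger balls give $K(y,r_y)=\partial D$ and the averages only decrease), which lets Lemma 3.3 apply uniformly. The sets $E_t$ is contained in $\bigcup_{y\in E_t}B(y,r_y)$, but to invoke Wiener's lemma I need a compact set; so I would fix a compact $F\subset E_t$ (measurability of $E_t$, or at least inner regularity of $\sigma$, reduces the problem to estimating $\sigma(F)$ and taking a supremum over such $F$). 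The balls $\{B(y,r_y)\}_{y\in F}$ cover $F$, and Lemma 5.2 produces a finite disjoint subfamily $B(y_1,r_{y_1}),\dots,B(y_n,r_{y_n})$ with $F\subset\bigcup_i B(y_i,3r_{y_i})$.

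Now I estimate, using $K(y_i,3r_{y_i})=B(y_i,3r_{y_i})\cap\partial D$ and Lemma 3.3 twice (once for $3r_{y_i}$, once for $r_{y_i}$):
\[
\sigma(F)\le\sum_{i=1}^n\sigma\bigl(K(y_i,3r_{y_i})\bigr)
\le\sum_{i=1}^n c_2\,(3r_{y_i})^{N-1}
=3^{N-1}c_2\sum_{i=1}^n r_{y_i}^{N-1}
\le\frac{3^{N-1}c_2}{c_1}\sum_{i=1}^n\sigma\bigl(K(y_i,r_{y_i})\bigr).
\]
Then, by the choice of the radii and the disjointness of the balls $B(y_i,r_{y_i})$ (hence of the sets $K(y_i,r_{y_i})\subset\partial D$),
\[
\sum_{i=1}^n\sigma\bigl(K(y_i,r_{y_i})\bigr)
<\frac{1}{t}\sum_{i=1}^n\int_{K(y_i,r_{y_i})}|f|\,d\sigma
=\frac{1}{t}\int_{\bigcup_i K(y_i,r_{y_i})}|f|\,d\sigma
\le\frac{1}{t}\int_{\partial D}|f|\,d\sigma
=\frac{\|f\|_{L^1(\partial D)}}{t}.
\]
Combining gives $\sigma(F)\le C\|f\|_{L^1(\partial D)}/t$ with $C=3^{N-1}c_2/c_1$, a constant depending only on $N$ and $D$ (through $c_1,c_2$ from Lemma 3.3). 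Taking the supremum over compact $F\subset E_t$ yields the claimed bound for $\sigma(E_t)$.

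The only genuinely delicate point is making sure Lemma 3.3 is applicable to the radius $3r_{y_i}$: that lemma was stated for $r\le\diam(D)$, whereas $3r_{y_i}$ could exceed $\diam(D)$. The fix is routine — if $3r_{y_i}>\diam(D)$ then $K(y_i,3r_{y_i})=\partial D$ and $\sigma(K(y_i,3r_{y_i}))=\sigma(\partial D)$, which is still $\le c_2(3r_{y_i})^{N-1}$ after possibly enlarging $c_2$ (or one simply observes $\sigma(\partial D)=\sigma(K(y_i,\diam D))\le c_2(\diam D)^{N-1}\le c_2(3r_{y_i})^{N-1}$ in that case) — but it should be addressed explicitly. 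A secondary technical point is the measurability of $E_t$; one can either note that $M[f]$ is lower semicontinuous (so $E_t$ is open, hence measurable and inner regular), or bypass it entirely by working with arbitrary compact subsets as above. Everything else is bookkeeping with the two covering/measure lemmas already proved.
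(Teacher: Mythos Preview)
Your argument is correct and follows essentially the same route as the paper's own proof: reduce to a compact subset $F$, apply Wiener's covering lemma to the balls $B(y,r_y)$, and use the two-sided estimate of Lemma~3.3 to compare $\sigma(K(y_i,3r_{y_i}))$ with $\sigma(K(y_i,r_{y_i}))$, arriving at the identical constant $C=3^{N-1}c_2/c_1$. Your extra remarks on the range of applicability of Lemma~3.3 and on the measurability of $E_t$ are points the paper leaves implicit, so if anything your write-up is slightly more careful; note only that what you call ``Lemma~5.2'' is labeled Lemma~5.1 in the paper.
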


\begin{proof}
Choose $t>0$ and let $F$ be a compact subset of $\left\{y\in\partial D:M[f](y)>t\right\}$. Because $\sigma$ is regular, it suffices to estimate $\sigma(F)$. Now for each $y\in F$ there exists $r_y>0$, such that 

\[
	\frac{1}{\sigma(K(y,r_y))}\int_{K(y,r_y)}|f(z)|d\sigma(z)>t.
\]
\\
The balls $\left\{B(y,r_y)\right\}_{y\in F}$ cover $F$. Choose, by Lemma 5.1, finite family of disjoint balls 
$\left\{B(y_i,r_{y_i})\right\}$ so that $\left\{B(y_i,3r_{y_i})\right\}$ cover $F$. Then

\[
	\sigma(F)\leq\sum_i\sigma(K(y_i,3r_{y_i}))\leq 3^{N-1}\frac{c_2}{c_1}\sum_i\sigma(K(y_i,r_{y_i})),
\]
\\
where $c_1,c_2$ are the constants from Lemma 3.3. Denoting $C=3^{N-1}c_2/c_1$ we conclude, that

\[
	\sigma(F)\leq \frac{C}{t}\sum_i\int_{K(y_i,r_{y_i})}|f(z)|d\sigma(z)\leq\frac{C\left\|f\right\|_{L^{1}(\partial D)}}{t}.
\]
\\
\end{proof}

\begin{lemma}
Suppose $u=P_D[f]$, where $f\in L^{1}(\partial D)$, and let $\alpha>0$. Then there exists $C_{\alpha}>0$, 
such that for every $y\in\partial D$

\[
	\sup_{x\in\Gamma_{\alpha}(y)}|u(x)|\leq C_{\alpha}M[f](y).
\]

\end{lemma}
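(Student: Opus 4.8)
The plan is to bound the Poisson integral pointwise on the cone by splitting the boundary integral into dyadic annuli centered at $y$, exactly as one does for the ball or half-space, and then recognize the resulting sum as a weighted average controlled by the Hardy--Littlewood maximal function $M[f](y)$. First I would fix $y\in\partial D$ and $x\in\Gamma_\alpha(y)$, so that $|x-y|<(1+\alpha)\delta(x)$; write $\delta=\delta(x)$ and note $\delta\le|x-y|$ trivially, so $|x-y|$ and $\delta$ are comparable with constants depending only on $\alpha$. Using Lemma 3.7 we have $P_D(x,z)\le C|x-z|^{-(N-1)}$ for all $z\in\partial D$, and since $\int_{\partial D}P_D(x,z)\,d\sigma(z)=1$ with $P_D>0$, we get
\[
	|u(x)|=\left|\int_{\partial D}P_D(x,z)f(z)\,d\sigma(z)\right|\le\int_{\partial D}P_D(x,z)|f(z)|\,d\sigma(z).
\]

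Next I would decompose $\partial D=K(y,2\delta)\cup\bigcup_{k\ge1}A_k$, where $A_k=K(y,2^{k+1}\delta)\setminus K(y,2^k\delta)$ (only finitely many $k$ are nonempty since $\diam(\partial D)$ is finite). On $K(y,2\delta)$ I would bound $P_D(x,z)$ crudely: for $z$ in this set, $|x-z|\ge\delta$ (actually one uses $|x-z|\ge\dist(x,\partial D)=\delta$ always, but here the ``always'' bound is what matters), so $P_D(x,z)\le C\delta^{-(N-1)}$, and by Lemma 3.3, $\sigma(K(y,2\delta))\le c_2(2\delta)^{N-1}$; hence
\[
	\int_{K(y,2\delta)}P_D(x,z)|f(z)|\,d\sigma(z)\le\frac{C}{\delta^{N-1}}\int_{K(y,2\delta)}|f(z)|\,d\sigma(z)\le C'\,\frac{\sigma(K(y,2\delta))}{\delta^{N-1}}\,M[f](y)\le C''M[f](y).
\]
On each annulus $A_k$, for $z\in A_k$ one has $|z-y|\ge2^k\delta\ge2^k|x-y|/(1+\alpha)$, so for $k$ large enough (depending on $\alpha$), $|x-z|\ge|z-y|-|x-y|\ge c_\alpha 2^k\delta$; thus $P_D(x,z)\le C(c_\alpha 2^k\delta)^{-(N-1)}$. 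Combining with $\sigma(K(y,2^{k+1}\delta))\le c_2(2^{k+1}\delta)^{N-1}$ gives
\[
	\int_{A_k}P_D(x,z)|f(z)|\,d\sigma(z)\le\frac{C}{(c_\alpha 2^k\delta)^{N-1}}\int_{K(y,2^{k+1}\delta)}|f(z)|\,d\sigma(z)\le C_\alpha 2^{-(N-1)k}\cdot C\, M[f](y),
\]
wait---more carefully, $\sigma(K(y,2^{k+1}\delta))/(2^k\delta)^{N-1}$ is bounded by a constant independent of $k$, so each term is $\le C_\alpha M[f](y)$; to get summability I would instead keep the full estimate $\int_{A_k}P_D(x,z)|f(z)|\,d\sigma(z)\le \tfrac{C}{(c_\alpha 2^k\delta)^{N-1}}\sigma(K(y,2^{k+1}\delta))M[f](y)\le C_\alpha' 2^{-(N-1)}M[f](y)$ is NOT enough---so the correct move is to note the sum telescopes geometrically only because of the gain $2^{-(N-1)k}$ from the kernel beating the $2^{(N-1)k}$ growth of the measure; these cancel, giving a constant per term. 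Hence one genuinely needs a slightly finer bound: on $A_k$ use $P_D(x,z)\le C(2^k\delta)^{-(N-1)}c_\alpha^{-(N-1)}$ and $\int_{A_k}|f|\,d\sigma\le\int_{K(y,2^{k+1}\delta)}|f|\,d\sigma\le c_2(2^{k+1}\delta)^{N-1}M[f](y)$, so each term is $\le C_\alpha M[f](y)$, and summing over the $O(\log(1/\delta))$ nonempty annuli would only give a logarithmic loss.

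The right fix---and the step I expect to be the main obstacle---is to extract an extra decay factor so the annular sum converges absolutely. This is achieved by a better estimate of $P_D(x,z)$ for $z$ far from $y$: one shows $P_D(x,z)\le C\,\delta(x)\,|x-z|^{-N}$ (the analogue of the classical ball estimate $P(x,z)\sim(1-|x|)/|x-z|^N$), which follows by examining the explicit Poisson kernel of the exterior ball $\widetilde B_y^c$ used in the proof of Lemma 3.7: there $P_{\widetilde B_y^c}(x,z)=\tfrac{1}{\omega_{N-1}r}\cdot\tfrac{|x-\tilde c_y|^2-r^2}{|x-z|^N}$, and $|x-\tilde c_y|^2-r^2=(|x-\tilde c_y|-r)(|x-\tilde c_y|+r)\le C\,\delta(x)$ since $|x-\tilde c_y|-r\le|x-y|\le C\delta(x)$ on the cone... actually $|x-\tilde c_y|-r$ is comparable to $\delta(x)$ when $x$ is near $y$; using this improved bound $P_D(x,z)\le C\delta(x)|x-z|^{-N}$ together with $|x-z|\ge c_\alpha 2^k\delta$ on $A_k$ yields
\[
	\int_{A_k}P_D(x,z)|f(z)|\,d\sigma(z)\le\frac{C\delta}{(c_\alpha 2^k\delta)^N}\,c_2(2^{k+1}\delta)^{N-1}M[f](y)=C_\alpha\,2^{-k}\,M[f](y),
\]
and $\sum_k 2^{-k}<\infty$. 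Adding the $K(y,2\delta)$ term, for which one uses $P_D(x,z)\le C\delta^{-(N-1)}$ directly, gives $|u(x)|\le C_\alpha M[f](y)$ uniformly in $x\in\Gamma_\alpha(y)$, which is the claim. So concretely the proof structure is: (i) record the pointwise kernel bounds $P_D(x,z)\le C\min\{\delta(x)^{-(N-1)},\,\delta(x)|x-z|^{-N}\}$ from Lemma 3.7 and its proof; (ii) on the cone, use comparability of $|x-y|$ and $\delta(x)$; (iii) split into $K(y,2\delta(x))$ and dyadic annuli; (iv) estimate each piece by $C_\alpha M[f](y)$ using Lemma 3.3 for the surface measure, with the annular pieces carrying the geometric factor $2^{-k}$; (v) sum. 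The only delicate point is justifying the $\delta(x)|x-z|^{-N}$ bound uniformly in $y$, which is why one leans on the exterior-ball comparison already set up in Lemma 3.7.
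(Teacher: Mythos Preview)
Your plan is the paper's own argument: fix $y$, take $x\in\Gamma_\alpha(y)$, split $\partial D$ into $K(y,2\eta)$ and dyadic annuli, and control each piece by $M[f](y)$ via Lemma 3.3 and a pointwise kernel bound. The paper uses the base radius $\eta=|x-y|$ where you use $\delta(x)$, but on the cone these are comparable, so the two versions are equivalent. You are also right that Lemma 3.7 alone (the bound $P_D(x,z)\le C|x-z|^{1-N}$) yields only a constant per annulus and hence a logarithmic loss; the summable factor $2^{-k}$ genuinely requires the sharper estimate $P_D(x,z)\le C\,\delta(x)\,|x-z|^{-N}$. In fact the paper's displayed inequality $P_D(x,z)\le C\,2^{2N}2^{-kN}\eta^{1-N}$ on the $k$-th annulus is exactly what this sharper bound gives (together with $\delta(x)\le\eta$), even though the paper cites only Lemma~3.7 at that step. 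So you have correctly isolated the one analytic input the paper's write-up glosses over.

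The gap in your proposal is the justification of that sharper bound. In the exterior-ball comparison of Lemma~3.7 the ball $\widetilde B$ is tangent at the point where the kernel is evaluated, namely $z$, not at the cone vertex $y$. Thus the relevant numerator is $|x-\tilde c_z|^2-r^2$, and $|x-\tilde c_z|-r=\dist(x,\partial\widetilde B_z)\ge\delta(x)$ (since $\overline{\widetilde B_z}\subset\RR^N\setminus D$), which is the \emph{wrong} inequality; the cone relation $|x-y|\le(1+\alpha)\delta(x)$ says nothing about $|x-\tilde c_z|-r$ when $z$ is far from $y$. The correct route uses instead that $P_D(\cdot,z)$ is positive, harmonic on $D$, and vanishes continuously on $\partial D\setminus\{z\}$: a standard boundary barrier/Carleson estimate for $C^{1,1}$ domains then gives, whenever $\delta(x)\le\tfrac12|x-z|$,
\[
P_D(x,z)\ \le\ C\,\frac{\delta(x)}{|x-z|}\,\sup_{w\in D\cap B(\pi(x),\,|x-z|/4)}P_D(w,z)\ \le\ C'\,\frac{\delta(x)}{|x-z|}\cdot\frac{1}{|x-z|^{N-1}},
\]
the last step by Lemma~3.7 since $|w-z|\ge c|x-z|$ on that set. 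With this one lemma supplied, your steps (i)--(v) go through and coincide with the paper's proof.
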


\begin{proof}
Choose $y\in\partial D$ and let $x\in\Gamma_{\alpha}(y)$; denote $\eta=|x-y|$. We have

\[
	|u(x)|\leq\int_{\partial D}P_D(x,z)|f(z)|d\sigma(z)=\int_{|z-y|<2\eta}P_D(x,z)|f(z)|d\sigma(z)
\]

\[
	+\sum^{\infty}_{k=2}\int_{2^{k-1}\eta<|z-y|<2^k\eta}P_D(x,z)|f(z)|d\sigma(z).
\]
\\
By Lemma 3.7, 
\[
	P_D(x,z)\leq \frac{C}{|x-z|^{N-1}}
\]
for some constant $C>0$. Because $\delta(x)\leq|x-z|$, we then get that $P_D(x,z)\leq C(\delta(x))^{1-N}$. 
The cone condition, $|x-y|<(1+\alpha)\delta(x)$, shows, that $\delta(x)>\eta/(1+\alpha)$. Thus 

\[
	\int_{|z-y|<2\eta}P_D(x,z)|f(z)|d\sigma(z)\leq C(1+\alpha)^{N-1}\eta^{1-N}\int_{K(y,2\eta)}|f(z)|d\sigma(z)
\]

\[
	\leq\frac{CC'2^N(1+\alpha)^{N-1}}{2}\cdot\frac{1}{\sigma(K(y,2\eta))}\int_{K(y,2\eta)}|f(z)|d\sigma(z),
\]
\\
where $C'$ is the constant from the upper estimation of Lemma 3.3. 

Similarly, if 
\[
	2^{k-1}\eta<|z-y|<2^k\eta,
\]
where $k\geq2$, then 
\[
	|x-z|\geq|z-y|-|y-x|\geq2^{k-1}\eta-\eta\geq2^{k-2}\eta,
\]
and
\[
	P_D(x,z)\leq \frac{C}{|x-z|^{N-1}}\leq C2^{2N}2^{-kN}\eta^{1-N}.
\]
\\
By Lemma 3.3,
\[
	\eta^{1-N}\leq\frac{C'2^{kN}2^{-k}}{\sigma(K(y,2^k\eta))};
\]
therefore

\[
	\int_{2^{k-1}\eta<|z-y|<2^k\eta}P_D(x,z)|f(z)|d\sigma(z)\leq C2^{2N}2^{-kN}\eta^{1-N}\int_{K(y,2^k\eta)}|f(z)|d\sigma(z)
\]

\[
	\leq\frac{CC'2^{2N}}{2^k}\cdot\frac{1}{\sigma(K(y,2^k\eta))}\int_{K(y,2^k\eta)}|f(z)|d\sigma(z).
\]
\\
Now, denoting $C_{\alpha}=\max\left\{CC'2^N(1+\alpha)^{N-1},CC'2^{2N}\right\}$, we conclude, that 

\[
	|u(x)|\leq C_{\alpha}\sum^{\infty}_{k=1}\frac{1}{2^k\sigma(K(y,2^k\eta))}\int_{K(y,2^k\eta)}|f(z)|d\sigma(z)\leq 
	C_{\alpha}M[f](y).
\]
\\
\end{proof}

\begin{proof2}
For $f\in L^1(\partial D)$ and $\alpha>0$, define the function $T_{\alpha}[f]$ on $\partial D$ by
\[
	T_{\alpha}[f](y)=\limsup_{\Gamma_{\alpha}(y)\ni x\to y}|P_D[f](x)-f(y)|.
\]
\\
We first show, that $T_{\alpha}[f]=0$ almost everywhere on $\partial D$.

Note that 
\[
	T_{\alpha}[f](y)\leq \sup_{x\in\Gamma_{\alpha}(y)}|P_D[f](x)|+|f(y)|\leq C_{\alpha}M[f](y)+|f(y)|
\]
\\
by Lemma 5.3, and that $T_{\alpha}[f_1+f_2]\leq T_{\alpha}[f_1]+T_{\alpha}[f_2]$. Note also that $T_{\alpha}[f]\equiv0$ for every $f\in C(\partial D)$.

Now fix $f\in L^1(\partial D)$ and $\alpha>0$. Also fixing $t\in (0,\infty)$, we wish to show that 
$\sigma(\left\{T_{\alpha}[f]>2t\right\})=0$.

Given $\varepsilon>0$, we may choose $g\in C(\partial D)$ such that $\left\|f-g\right\|_{L^1(\partial D)}<\varepsilon$. Then we have
\[
	T_{\alpha}[f]\leq T_{\alpha}[f-g]+T_{\alpha}[g]=T_{\alpha}[f-g]\leq C_{\alpha}M[f-g]+|f-g|.
\]
\\
Therefore
\[
	\left\{T_{\alpha}[f]>2t\right\}\subset\left\{C_{\alpha}M[f-g]>t\right\}\cup\left\{|f-g|>t\right\}.
\]
By Lemma 5.2 we conclude, that

\[
	\sigma(\left\{T_{\alpha}[f]>2t\right\})\leq\frac{CC_{\alpha}\left\|f-g\right\|_{L^1(\partial D)}}{t}+
	\frac{\left\|f-g\right\|_{L^1(\partial D)}}{t}<\varepsilon\frac{CC_{\alpha}+1}{t}.
\]
\\
Since $\varepsilon$ is arbitrary, we have shown that the set $\left\{T_{\alpha}[f]>2t\right\}$ is contained in sets of 
arbitrarily small measure, and thus $\sigma(\left\{T_{\alpha}[f]>2t\right\})=0$. Because this is true for every $t\in(0,\infty)$, we have proved, that $T_{\alpha}[f]=0$ almost everywhere on $\partial D$.

Now for $k\in\NN$ let $E_k=\left\{T_k[f]=0\right\}$. We have shown that $E_k$ is set of full measure on $\partial D$ for each $k$, and thus $\bigcap_kE_k$ is a set of full measure. At each $y\in\bigcap_kE_k$, $P[f]$ has nontangential limit $f(y)$, which is what we set out to prove. 
\\
\end{proof2}

\section{The Local Fatou Theorem}
\markright{The Local Fatou Theorem}
\medskip
Theorem 5.1 has a local version. We require a definition. A function $u$ on $D$ is said to be $nontangentially$ $bounded$ at $y\in\partial D$ if $u$ is bounded in $\Gamma_{\alpha}(y)$ for some $\alpha>0$. 
\medskip
\begin{theorem}
Suppose $u$ is harmonic on $D$ and $E\subset\partial D$ is the set of points at which $u$ is nontangentially bounded. Then $u$ has a  nontangential limit at almost every point of $E$.
\end{theorem}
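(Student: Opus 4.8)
The plan is to run the classical argument: decompose $E$ into pieces on which $u$ is uniformly bounded in cones of a fixed aperture, over each such piece build a "sawtooth" (tent) subdomain of $D$ on which $u$ is bounded, quote the Fatou theorem for bounded harmonic functions on that (Lipschitz) subdomain, and finally transfer the boundary limits back to $D$ at points of density. For $n\in\NN$ put $E_n=\{y\in\partial D:|u|\le n\text{ on }\Gamma_{1/n}(y)\}$. Since $\bigcup_{\alpha>0}\Gamma_\alpha(y)=D$, every point at which $u$ is nontangentially bounded lies in some $E_n$, so $E=\bigcup_n E_n$ and it suffices to treat each $E_n$. Fix $n$ and write $\alpha=1/n$, $M=n$. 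By inner regularity of $\sigma$ we may pick compact $F\subset E_n$ with $\sigma(E_n\setminus F)$ as small as we like; since $\sigma$-a.e.\ point of $E_n$ lies in such an $F$, it is enough to produce a (full) nontangential limit at $\sigma$-a.e.\ point of one fixed such $F$, on which $|u|\le M$ in every cone $\Gamma_\alpha(y)$, $y\in F$.

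\textbf{Sawtooth domain.} Let $\Omega=\bigcup_{y\in F}\Gamma_\alpha(y)$, an open subset of $D$ with $|u|\le M$ on $\Omega$. Working in the local coordinates of Lemma 3.1 near a boundary point, one sees that in the model picture $\Omega$ is the region $\{x_N>\psi(\overline x)\}$, where $\psi$ is a fixed positive multiple of $\overline x\mapsto\dist(\overline x,\{\overline y:y\in F\})$; in particular $\psi$ is Lipschitz and $\{\psi=0\}$ is the image of $F$. Hence, after localizing if necessary, $\Omega$ is a bounded Lipschitz domain, $F\subset\partial\Omega$, and the surface measure of $\partial\Omega$ restricted to $F$ is exactly $\sigma|_F$. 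Now invoke the Fatou theorem for bounded harmonic functions on a bounded Lipschitz domain (this is where we step outside the $C^2$ theory of \S4--\S5 and use \cite{K}): $u$ has a nontangential limit \emph{relative to $\Omega$} at almost every point of $\partial\Omega$, hence at $\sigma$-a.e.\ point $y_0\in F$. (Alternatively one may run the area–integral argument of \cite{S2}: integrating $\int_{\Gamma_{\alpha/2}(y)}|\nabla u|^2\delta^{2-N}\,dx$ over $y\in F$ and using Lemma 3.3 shows it is finite for $\sigma$-a.e.\ $y\in F$, and finiteness of the area integral forces a nontangential limit.)

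\textbf{Transfer at density points.} Since $\sigma$ is doubling (Lemma 3.3), the Lebesgue density theorem holds on $(\partial D,\sigma)$, so $\sigma$-a.e.\ $y_0\in F$ is a point of $\sigma$-density $1$ of $F$. Fix such a $y_0$ at which the $\Omega$-nontangential limit $L$ exists and fix any $\beta>0$. For $x\in\Gamma_\beta(y_0)$ close to $y_0$ put $d=\delta(x)$; then $|\pi(x)-y_0|\le d+|x-y_0|<(2+\beta)d$, while $\sigma(K(\pi(x),cd))\asymp d^{N-1}$ and $\sigma\big(K(y_0,(2+\beta+c)d)\setminus F\big)=o(d^{N-1})$ as $d\to0$; hence for $d$ small $K(\pi(x),cd)$ meets $F$, say at $y$, and then $|x-y|\le d+cd<(1+\alpha)d$ once $c<\alpha$, i.e.\ $x\in\Gamma_\alpha(y)\subset\Omega$. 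Taking $c$ a bit smaller one even gets $B(x,c'\delta(x))\subset\Omega$, so $\delta_\Omega(x)\gtrsim\delta(x)\gtrsim_\beta|x-y_0|$ and $x$ approaches $y_0$ nontangentially relative to $\Omega$ as well. Therefore $u(x)\to L$ as $x\to y_0$ within $\Gamma_\beta(y_0)$, and since $\beta>0$ was arbitrary, $u$ has a nontangential limit at $y_0$. Letting $\sigma(E_n\setminus F)\to0$, then taking the union over $n$, completes the proof.

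\textbf{Main obstacle.} Beyond the routine reductions, the two load-bearing points are (i) recognizing the sawtooth $\Omega$ as a bounded Lipschitz domain so that a Fatou theorem for bounded harmonic functions is available — which forces reliance on \cite{K} (or the area-integral machinery of \cite{S2}) rather than the $C^2$ theory already developed; and (ii) the density computation above, namely that at a $\sigma$-density point the sawtooth over $F$ swallows a whole cone $\Gamma_\beta(y_0)$ for every $\beta$, compatibly with nontangential approach to $\Omega$. Point (ii) is where the genuine care is needed, since it is exactly what upgrades an $\Omega$-nontangential limit to a $D$-nontangential limit of full aperture.
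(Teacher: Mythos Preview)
Your reduction to compact pieces and the density argument (your step 3) match the paper closely; these are Lemmas 6.7 and 6.8. The divergence is in how a limit is extracted from boundedness on the sawtooth $\Omega$. You invoke a Fatou theorem for bounded harmonic functions on the Lipschitz domain $\Omega$. The paper instead stays entirely inside the $C^2$ framework: it builds a barrier $v=c_\alpha^{-1}P_D[\chi_{E^c}]$ on $D$ itself (Lemma 6.2), which satisfies $v\ge1$ on $(\partial\Omega)\cap D$ and has nontangential limit $0$ a.e.\ on $E$ by the already-proved Theorem 5.1; it then shifts $u$ inward along a fixed direction to form $u^k=P_D[f^k]-u(\cdot-\tfrac{r_0}{k}\nu_i)$ on a local piece, passes to a weak$^*$ limit $f$, and applies the maximum principle on the sawtooth against $\pm 2v$ to obtain $|P_D[f]-u|\le 2v$ there, so that $u$ inherits the nontangential limits of $P_D[f]$ a.e.\ on $E$ (Lemma 6.6). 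The paper's route is longer but self-contained, using nothing beyond Theorem 5.1. Your route is shorter but imports a substantial external theorem --- Fatou on Lipschitz domains is Hunt--Wheeden/Dahlberg and is not in \cite{K} --- and your local description of $\Omega$ as $\{x_N>\psi(\overline x)\}$ with $\psi$ a multiple of the distance to the projection of $F$ is the half-space picture: here the cones are defined via $\delta(x)=\dist(x,\partial D)$ rather than the flat height in the chart, so turning that sketch into an honest Lipschitz-graph statement requires comparing the $D$-cones with straight cones in local coordinates, which you have not done. Your area-integral alternative would need part (2) of Theorem 7.1, whose proof the paper itself omits.
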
\medskip

This theorem was originally obtained by Privalov, Plessner, Marcinkiewicz and Zygmund, and Spencer in the classical case $N=2$ by the use of complex-variable techniques. Methods which are effective for the upper half-space in 
$\RR^N$, $N\geq2$, have been served by Calderon and Stein. The proof for this case may be found in \cite{ABR}.

Using similar methods as in \cite{ABR}, we will serve a detailed proof of Theorem 6.1 for the present case, when $D$ is a bounded domain in $\RR^N,N\geq 2$, with the boundary of class $C^2$. In order to do this, we shall use a few important technical lemmas. We begin with some stronger assertion about the behavior of the Poisson kernel inside the cone.

\begin{lemma}
Let $\alpha>0$. There exists positive constant $A_{\alpha}$, such that for every $y\in\partial D$ and 
$x\in\Gamma_{\alpha}(y)$ we have
\[
	P_D(x,y)\geq\frac{A_{\alpha}}{\delta(x)^{N-1}}.
\]

\end{lemma}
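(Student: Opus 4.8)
The plan is to bound $P_D(x,y)$ from below by the Poisson kernel of an interior ball supplied by the ball condition (Lemma 3.2), mirroring the upper estimate of Lemma 3.7, after first disposing of the points of $\Gamma_{\alpha}(y)$ that stay away from $\partial D$ by a compactness argument. Throughout, let $r>0$ be the constant from Lemma 3.2 and, for $y\in\partial D$, put $c_y=y-r\nu_y$, so that $B(c_y,r)\subset D$, $y\in\partial B(c_y,r)\cap\partial D$, and the outward unit normal to $B(c_y,r)$ at $y$ equals $\nu_y$.

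The geometric heart of the argument is the claim that there is a constant $r_{\alpha}\in(0,r]$, depending only on $\alpha$ and $D$, such that $x\in\Gamma_{\alpha}(y)$ together with $\delta(x)<r_{\alpha}$ forces $x\in B(c_y,r)$. Since $x\in B(c_y,r)$ is equivalent to $\langle x-y,\nu_y\rangle<-|x-y|^2/(2r)$, it suffices to estimate $\langle x-y,\nu_y\rangle$ from above. Writing $p=\pi(x)$ (Lemma 3.4), one has $x=p-\delta(x)\nu_p$ (the remark following Lemma 3.4) and $|p-y|\le\delta(x)+|x-y|<(2+\alpha)\delta(x)$, so $p$ lies in the coordinate patch of Lemma 3.1 around $y$ once $r_{\alpha}$ is small. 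Decomposing $\langle x-y,\nu_y\rangle=\langle p-y,\nu_y\rangle-\delta(x)\langle\nu_p,\nu_y\rangle$, I bound $|\langle p-y,\nu_y\rangle|\le M'|p-y|^2$ (this is precisely the Taylor estimate $|\varphi_y(\overline{p})|\le M'|\overline{p}-\overline{y}|^2$ for the graph of $\partial D$ obtained in the proof of Lemma 3.2, read in the normal direction) and use $\langle\nu_p,\nu_y\rangle=1-\tfrac12|\nu_p-\nu_y|^2\ge 1-\tfrac{c_0^2}{2}|p-y|^2$ from Section 3. Hence $\langle x-y,\nu_y\rangle\le-\delta(x)+C\delta(x)^2$ with $C=C(\alpha,D)$, and since $|x-y|^2<(1+\alpha)^2\delta(x)^2$ this is $<-|x-y|^2/(2r)$ as soon as $\delta(x)$ is small; I fix $r_{\alpha}$ so that this, and the inequality in the next paragraph, hold.

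For $x\in\Gamma_{\alpha}(y)$ with $\delta(x)<r_{\alpha}$, hence $x\in B(c_y,r)$, I run the comparison exactly as in Lemma 3.7 but with the interior ball: $G_D(x,\cdot)-G_{B(c_y,r)}(x,\cdot)$ is harmonic on $B(c_y,r)$ (the singularities cancel), nonnegative on $\partial B(c_y,r)$ because $G_D(x,\cdot)\ge0$ there while $G_{B(c_y,r)}(x,\cdot)=0$ there, hence nonnegative on $B(c_y,r)$, and both functions vanish at $y$; differentiating along $-\nu_y$ yields $P_D(x,y)\ge P_{B(c_y,r)}(x,y)$. Plugging in the explicit ball kernel, $P_{B(c_y,r)}(x,y)=\tfrac{1}{\omega_{N-1}r}\cdot\tfrac{r^2-|x-c_y|^2}{|x-y|^N}$, and since $r^2-|x-c_y|^2=-2r\langle x-y,\nu_y\rangle-|x-y|^2\ge 2r\delta(x)-C'\delta(x)^2\ge r\delta(x)$ (for $\delta(x)<r_{\alpha}$, shrinking $r_{\alpha}$ if needed) while $|x-y|^N<(1+\alpha)^N\delta(x)^N$, I obtain $P_D(x,y)\ge\tfrac{1}{\omega_{N-1}(1+\alpha)^N}\,\delta(x)^{1-N}$.

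It remains to handle $x\in\Gamma_{\alpha}(y)$ with $\delta(x)\ge r_{\alpha}$. The set $K=\{x\in D:\delta(x)\ge r_{\alpha}\}$ is a compact subset of $D$, so by continuity and strict positivity of $P_D$ on $D\times\partial D$ the kernel attains a minimum $m>0$ on the compact set $K\times\partial D$; since $\delta(x)\ge r_{\alpha}$ this gives $P_D(x,y)\,\delta(x)^{N-1}\ge m\,r_{\alpha}^{N-1}$. Taking $A_{\alpha}=\min\{\omega_{N-1}^{-1}(1+\alpha)^{-N},\,m\,r_{\alpha}^{N-1}\}$ yields the asserted bound on all of $\Gamma_{\alpha}(y)$. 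I expect the only genuine obstacle to be the geometric claim of the second paragraph — that cone points close to the boundary really sit inside the osculating interior ball at $y$ — which is exactly where the $C^2$ regularity of $\partial D$ (through the uniform Taylor bound) enters.
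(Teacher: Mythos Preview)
Your proof is correct and follows the same overall skeleton as the paper's: dispose of the region $\{\delta(x)\ge r_\alpha\}$ by compactness, and near the boundary compare $G_D$ with the Green's function of an interior tangent ball at $y$ to get $P_D(x,y)\ge P_{B(c_y,r)}(x,y)$, then use the explicit ball kernel together with $|x-y|<(1+\alpha)\delta(x)$.

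The one place your argument and the paper's genuinely diverge is the geometric claim that cone points with small $\delta(x)$ lie in the interior ball. You establish $\langle x-y,\nu_y\rangle\le -\delta(x)+C\delta(x)^2$ directly, by writing $x=\pi(x)-\delta(x)\nu_{\pi(x)}$ and invoking the quadratic Taylor bound $|\langle p-y,\nu_y\rangle|\le M'|p-y|^2$ from the proof of Lemma~3.2 plus the Lipschitz bound on $\nu$. The paper instead routes the argument through the \emph{exterior} ball $\widetilde B_y=B(y+4r_0\nu_y,4r_0)\subset D^c$: since $\delta(x)\le\dist(x,\partial\widetilde B_y)$, one has $\Gamma_\alpha(y)\subset\widetilde\Gamma_\alpha(y)$ (the cone in $\widetilde B_y^{\,c}$), and then a purely algebraic computation shows that truncated cones in $\widetilde B_y^{\,c}$ sit inside the small interior ball $B(y-r_0\nu_y,r_0)$; the exterior ball is used a second time to bound $\delta(x)$ by $2(4r_0-|x-c_y|)$. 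Your route is a little more direct and reuses the same inequality $\langle x-y,\nu_y\rangle\le -\delta(x)+C\delta(x)^2$ both for the inclusion $x\in B(c_y,r)$ and for the lower bound on $r^2-|x-c_y|^2$; the paper's route has the advantage of never unpacking the local graph representation again, reducing everything to elementary estimates with spheres.
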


\begin{proof}
Notice that, if $K$ is compact set in $D$, then the estimate we seek is trivial for $x\in K$ and $y\in\partial D$, since 
$P_D(x,y)$ is positive and continuous on $D\times\partial D$, and $\delta(x)$ is bounded away from zero on $K$.

Let $r$ be the constant from Lemma 3.2, and let $r_0=r/4$. Hence, for $y\in\partial D$ we have 
\[
	\overline{B}(y+4r_0\nu_y,4r_0)\cap\overline{D}=\left\{y\right\},
\]
\[
	\overline{B}(y-4r_0\nu_y,4r_0)\cap\overline{D^c}=\left\{y\right\}.
\]
\\
Suppose $x\in B(y-r_0\nu_y,r_0)$. This means that

\[
	\left|x-y+r_0\nu_y\right|<r_0
\]

\[
	|x-y|^2-2r_0\langle x-y,-\nu_y\rangle+r_0^2<r_0^2
\]

\[
	|x-y|^2<2r_0\langle x-y,-\nu_y\rangle.
\]
\\
Moreover
\[
	\dist(x,\partial B(y+4r_0\nu_y,4r_0))=|x-y-4r_0\nu_y|-4r_0,
\]

\[
	\dist(x,\partial B(y-4r_0\nu_y,4r_0))=4r_0-|x-y+4r_0\nu_y|,
\]
and thus
\[
	|x-y-4r_0\nu_y|-4r_0=\frac{|x-y-4r_0\nu_y|^2-16r_0^2}{|x-y-4r_0\nu_y|+4r_0}=\frac{|x-y|^2+8r_0
	\langle x-y,-\nu_y\rangle}{|x-y-4r_0\nu_y|+4r_0}
\]

\[
	<\frac{10r_0\langle x-y,-\nu_y\rangle}{8r_0}=\frac{5}{4}\langle x-y,-\nu_y\rangle,
\]

\[
	4r_0-|x-y+4r_0\nu_y|=\frac{16r_0^2-|x-y+4r_0\nu_y|^2}{4r_0+|x-y+4r_0\nu_y|}=\frac{-|x-y|^2+8r_0
	\langle x-y,-\nu_y\rangle}{4r_0+|x-y+4r_0\nu_y|}
\]

\[
	>\frac{6r_0\langle x-y,-\nu_y\rangle}{8r_0}=\frac{3}{4}\langle x-y,-\nu_y\rangle.
\]
Therefore
\[
	|x-y-4r_0\nu_y|-4r_0\leq 2(4r_0-|x-y+4r_0\nu_y|).
\]
Obviously
\[
	\delta(x)\leq \dist(x,\partial B(y+4r_0\nu_y,r_0))=|x-y-4r_0\nu_y|-4r_0,
\]
since $B(y+4r_0\nu_y,4r_0)\subset D^c$, and hence 

\[
	\delta(x)\leq 2(4r_0-|x-y+4r_0\nu_y|).
\]
\\
Now choose $y\in\partial D$ and let $x\in B(y-r_0\nu_y,r_0)$. Denote $c_y=y-4r_0\nu_y$ and $B_y=B(c_y,4r_0)$. Let $G_D$, $G_{B_y}$ 
be the Green's functions for $D$ and $B_y$, respectively. Observe, that $G_{B_y}(x,\cdot)$ is 0 on $\partial B_y$, whereas 
$G_D(x,\cdot)\geq0$ on $\partial B_y$. Since $G_D(x,\cdot)-G_{B_y}(x,\cdot)$ is harmonic on $B_y$, it follows that 
\[
	G_{B_y}(x,t)\leq G_D(x,t),\quad t\in\overline{B_y}.
\]
\[
	G_{B_y}(x,y)=G_D(x,y)=0.
\]
Therefore
\[
	P_{B_y}(x,y)=-\frac{\partial{G_{B_y}(x,y)}}{\partial{\nu_y}}\leq -\frac{\partial{G_D(x,y)}}{\partial{\nu_y}}=P_D(x,y).
\]
\\
The Poisson kernel for $B_y$ has a form 
\[
	P_{B_y}(x,t)=\frac{1}{\omega_{N-1}4r_0}\cdot\frac{16r_0^2-|x-c_y|^2}{|x-t|^N},\quad x\in B_y,t\in\partial B_y.
\]
\\
Hence we have
\[
	P_D(x,y)\geq\frac{1}{\omega_{N-1}4r_0}\cdot\frac{16r_0^2-|x-c_y|^2}{|x-y|^N}
\]

\[
	=\frac{1}{\omega_{N-1}4r_0}\cdot\frac{(4r_0-|x-c_y|)(4r_0+|x-c_y|)}{|x-y|^N}
\]

\[
	\geq \frac{1}{\omega_{N-1}}\cdot\frac{(4r_0-|x-c_y|)}{|x-y|^N}=\frac{1}{\omega_{N-1}}\cdot\frac{(4r_0-|x-y+4r_0\nu_y|)}{|x-y|^N}
	\geq \frac{1}{2\omega_{N-1}}\cdot\frac{\delta(x)}{|x-y|^N}.
\]
\\
Thus we have shown, that  
\[
	P_D(x,y)\geq C\frac{\delta(x)}{|x-y|^N}.
\]
\\
for every $y\in\partial D$ and $x\in B(y-r\nu_y,r_0)$.

Now for $h>0$ let $\Gamma^{h}_{\alpha}(y)=\Gamma_{\alpha}(y)\cap\left\{x:\delta(x)<h\right\}$. We will show, that there exists 
$h_{\alpha}>0$, such that for every $y\in\partial D$, $\Gamma^{h_{\alpha}}_{\alpha}(y)\subset B(y-r_0\nu_y,r_0)$. 
For $y\in\partial D$ denote $\widetilde{B}_y=B(y+4r_0\nu_y,4r_0)$, and recall that 
$\overline{\widetilde{B}}_y\cap\overline{D}=\left\{y\right\}$. Let 
$\widetilde{\Gamma}_{\alpha}(y)$ be the cone in $\widetilde{B}^{c}_{y}$. That is, 
\[
	\widetilde{\Gamma}_{\alpha}(y)=\left\{x\in\widetilde{B}^{c}_{y}:|x-y|<(1+\alpha)\dist(x,\partial\widetilde{B}_y)\right\};
\]
let 
\[
	\widetilde{\Gamma}^{h}_{\alpha}(y)=\widetilde{\Gamma}_{\alpha}(y)\cap\left\{x:\dist(x,\partial\widetilde{B}_y)<h\right\}.
\]
\\
Since $\delta(x)\leq\dist(x,\partial\widetilde{B}_y)$, we have $\Gamma_{\alpha}(y)\subset\widetilde{\Gamma}_{\alpha}(y)$. 
Moreover, observe that

\[
	\Gamma^{h}_{\alpha}(y)=\left\{x\in D:|x-y|<(1+\alpha)\delta(x)\wedge\delta(x)<h\right\}
\]

\[
	=\left\{x\in D:|x-y|<(1+\alpha)\delta(x)\wedge\delta(x)<h\wedge|x-y|<(1+\alpha)h\right\}
\]

\[
	\subset\Gamma_{\alpha}(y)\cap\left\{x\in D:|x-y|<(1+\alpha)h\right\}
\]

\[
	\subset\widetilde{\Gamma}_{\alpha}(y)\cap\left\{x:\dist(x,\partial\widetilde{B}_y)<(1+\alpha)h\right\}
	=\widetilde{\Gamma}^{h'}_{\alpha}(y),
\]
\\
where $h'=(1+\alpha)h$. Thus it suffices to show, that there exists $h'_{\alpha}>0$, such that 
$\widetilde{\Gamma}^{h'_{\alpha}}_{\alpha}(y)\subset B(y-r_0\nu_y,r_0)$ for every $y\in\partial D$. 

Suppose $h'_{\alpha}<8r_0/5(1+\alpha)^2$, and fix $y\in\partial D$, $x\in\widetilde{\Gamma}^{h'_{\alpha}}_{\alpha}(y)$. We have

\[
	|x-y|<(1+\alpha)\dist(x,\partial\widetilde{B}_y)=(1+\alpha)(|x-y-4r_0\nu_y|-4r_0)
\]

\[
	=(1+\alpha)\frac{|x-y-4r_0\nu_y|^2-16r_0^2}{|x-y-4r_0\nu_y|+4r_0}=(1+\alpha)\frac{|x-y|^2-8r_0
	\langle x-y,\nu_y\rangle}{|x-y-4r_0\nu_y|+4r_0}
\]

\[
	\leq (1+\alpha)\frac{|x-y|^2-8r_0\langle x-y,\nu_y\rangle}{8r_0}.
\]
\\
Hence
\[
	\langle x-y,\nu_y\rangle<\frac{|x-y|^2}{8r_0}-\frac{|x-y|}{1+\alpha}
	=\frac{5|x-y|^2}{8r_0}-\frac{|x-y|}{1+\alpha}-\frac{|x-y|^2}{2r_0}
\]

\[
	=|x-y|\left(\frac{5|x-y|}{8r_0}-\frac{1}{1+\alpha}\right)-\frac{|x-y|^2}{2r_0}<-\frac{|x-y|^2}{2r_0},
\]
since $|x-y|<(1+\alpha)h'_{\alpha}$. Thus we conclude
\[
	\langle x-y,\nu_y\rangle<-\frac{|x-y|^2}{2r_0}
\]

\[
	|x-y|^2+2r_0\langle x-y,\nu_y\rangle+r_0^2<r_0^2
\]

\[
	|x-y+r_0\nu_y|<r_0,
\]
\\
what means, that $x\in B(y-r_0\nu_y,r_0)$, so $\widetilde{\Gamma}^{h'_{\alpha}}_{\alpha}(y)\subset B(y-r_0\nu_y,r_0)$. 
Denoting $h_{\alpha}=h'_{\alpha}/(1+\alpha)$, we have $\Gamma^{h_{\alpha}}_{\alpha}(y)\subset B(y-r_0\nu_y,r_0)$.

Therefore, if $x\in\Gamma^{h_{\alpha}}_{\alpha}(y)$, then
\[
	P_D(x,y)\geq C\frac{\delta(x)}{|x-y|^N}>C\frac{\delta(x)}{(1+\alpha)^N\delta(x)^N}
	=\frac{C}{(1+\alpha)^N}\cdot\frac{1}{\delta(x)^{N-1}}.
\]
\\
Now, since $K_{\alpha}=\left\{x\in D:\delta(x)\geq h_{\alpha}\right\}$ is compact, we can find a positive constant 
$A_{\alpha}\leq C/(1+\alpha)^N$, such that for every $y\in\partial D$ and $x\in K_{\alpha}$ 
\[
	P_D(x,y)\geq\frac{A_{\alpha}}{\delta(x)^{N-1}}.
\]
Thus the inequality is proved.

\end{proof}

\begin{lemma}
Suppose $E\subset\partial D$ is Borel measurable, $\alpha>0$, and 
\[
	\Omega = \bigcup_{y\in E}\Gamma_{\alpha}(y).
\]
Then there exists a positive harmonic function $v$ on $D$ such that $v\geq1$ on $(\partial\Omega)\cap D$, and such
that $v$ has nontangential limit $0$ almost everywhere on $E$.
\end{lemma}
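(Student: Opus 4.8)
The plan is to let $v$ be a suitable constant multiple of the Poisson integral of the indicator function of $\partial D\setminus E$. Set $f=\chi_{\partial D\setminus E}\in L^{1}(\partial D)$ and $v=c_{\alpha}^{-1}P_{D}[f]$, where $c_{\alpha}=A_{\alpha}c_{1}\alpha^{N-1}$, with $A_{\alpha}$ the constant of Lemma 6.2 and $c_{1}$ the lower constant of Lemma 3.3. Then $v$ is harmonic on $D$, and it is \emph{strictly} positive provided $\sigma(\partial D\setminus E)>0$, since $P_{D}>0$; I will reduce to this case at the end. By Theorem 5.1, $v$ has nontangential limit $c_{\alpha}^{-1}f(y)$ at $\sigma$-almost every $y\in\partial D$, and since $f\equiv0$ on $E$ this limit is $0$ at almost every point of $E$, which is one of the two required properties.

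It remains to prove $v\ge 1$ on $(\partial\Omega)\cap D$. The key observation is that $\Omega$ is open (each $\Gamma_{\alpha}(y)$ is open in $D$, since $\delta$ and $|\cdot-y|$ are continuous), so a point $x\in(\partial\Omega)\cap D$ does not belong to $\Omega$; hence $x\notin\Gamma_{\alpha}(y)$, i.e.\ $|x-y|\ge(1+\alpha)\delta(x)$, for every $y\in E$. Pick $\hat y\in\partial D$ with $|x-\hat y|=\delta(x)$ (possible by compactness of $\partial D$). For any $y\in K(\hat y,\alpha\delta(x))$ the triangle inequality gives $|x-y|<\delta(x)+\alpha\delta(x)=(1+\alpha)\delta(x)$, so $y\notin E$ and $x\in\Gamma_{\alpha}(y)$. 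Thus $K(\hat y,\alpha\delta(x))\subset\partial D\setminus E$, and, noting that $\alpha\delta(x)<\diam(D)$ because $\delta(x)\le\diam(D)/(1+\alpha)$ off $\Omega$, Lemmas 6.2 and 3.3 yield
\[
	v(x)\ \ge\ c_{\alpha}^{-1}\!\!\int_{K(\hat y,\alpha\delta(x))}\!\! P_{D}(x,y)\,d\sigma(y)\ \ge\ \frac{c_{\alpha}^{-1}A_{\alpha}}{\delta(x)^{N-1}}\,\sigma\big(K(\hat y,\alpha\delta(x))\big)\ \ge\ c_{\alpha}^{-1}A_{\alpha}c_{1}\alpha^{N-1}=1 .
\]

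The same estimate shows that if $(\partial\Omega)\cap D$ is nonempty then $\sigma(\partial D\setminus E)\ge c_{1}(\alpha\delta(x))^{N-1}>0$; hence when $\sigma(\partial D\setminus E)=0$ one has $(\partial\Omega)\cap D=\emptyset$, and then any positive harmonic function with nontangential limit $0$ almost everywhere on $\partial D$ — for instance $v=P_{D}(\cdot,y_{0})$, the Poisson integral of a unit point mass at a fixed $y_{0}\in\partial D$ — serves, so we may indeed assume $\sigma(\partial D\setminus E)>0$. The only genuinely delicate point is this bookkeeping: the clean formula $v=c_{\alpha}^{-1}P_{D}[\chi_{\partial D\setminus E}]$ produces a strictly positive function precisely when $\partial D\setminus E$ has positive measure, and one must check — via the geometric fact that a point of $\partial\Omega$ inside $D$ stands off from $E$ by a definite fraction of its distance to $\partial D$ — that this restriction costs nothing. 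Everything else is a direct combination of Lemma 6.2, Lemma 3.3 and Theorem 5.1.
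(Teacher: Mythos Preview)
Your proof is correct and follows essentially the same route as the paper's: define $w=P_D[\chi_{E^c}]$, use Theorem~5.1 for the nontangential limits on $E$, and for $x\in(\partial\Omega)\cap D$ observe that the surface ball $K(\hat y,\alpha\delta(x))$ about the nearest boundary point lies in $E^c$ and consists of vertices of cones containing $x$, so Lemmas~6.1 and~3.3 give the uniform lower bound. Note that your references to ``Lemma~6.2'' for the constant $A_\alpha$ should read Lemma~6.1 (the Poisson-kernel lower bound inside cones); Lemma~6.2 is the very statement you are proving --- otherwise your argument matches the paper's, and your extra care with the degenerate case $\sigma(\partial D\setminus E)=0$ is a welcome addition.
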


\begin{proof}
Define a positive harmonic function $w$ on $D$ by 
\[
	w(x)=\int_{\partial D}{P_D(x,y)\chi_{E^{c}}(y)d\sigma(y)},
\]
where $\chi_{E^{c}}$ denotes the characteristic function of $E^{c}$, the complement of $E$ in $\partial D$.
By Theorem 2, $w$ has nontangential limit $0$ almost everywhere on $E$. We wish to show that $w$ is bounded
away from $0$ on $(\partial\Omega)\cap D$. Choose $x\in(\partial\Omega)\cap D$; this means, that
\[
	|x-y|\geq(1+\alpha)\delta(x)\quad\forall y\in E.
\]
Because $\partial D$ is compact, there exists $x'\in\partial D$, such that $|x-x'|=\delta(x)$. Hence for every 
$y\in E$ we have

\[
	|x-y|\geq(1+\alpha)|x-x'|
\]

\[
	|y-x'|\geq|y-x|-|x-x'|\geq\alpha|x-x'|
\]

\[
	|y-x'|\geq\alpha\delta(x),
\]
\\
so $K(x',\alpha\delta(x))\subset E^{c}$. Therefore
 
\[
	\int_{\partial D}P_D(x,y)\chi_{E^{c}}(y)d\sigma(y)
	=\int_{E^{c}}P_D(x,y)d\sigma(y)\geq\int_{K(x',\alpha\delta(x))}P_D(x,y)d\sigma(y).
\]
\\
On the other hand, if $y\in K(x',\alpha\delta(x))$, then

\[
	|x-y|\leq\delta(x)+|x'-y|<(1+\alpha)\delta(x),
\]
\\
so $x\in\Gamma_{\alpha}(y)$. By Lemmas 3.3 and 6.1, there exist positive constants $c$, $A_{\alpha}$, such that 

\[
	\sigma\left\{K(x',\alpha\delta(x))\right\}\geq c(\alpha\delta(x))^{N-1}
\]
and
\[
	P_D(x,y)\geq \frac{A_{\alpha}}{\delta(x)^{N-1}}\quad\forall y\in K(x',\alpha\delta(x)).
\]
Hence

\[
	\int_{K(x',\alpha\delta(x))}P_D(x,y)d\sigma(y)\geq 
	\frac{A_{\alpha}}{\delta(x)^{N-1}}\sigma\left\{K(x',\alpha\delta(x))\right\}\geq cA_{\alpha}\alpha^{N-1}.
\]
\\
Denoting $c_{\alpha}=cA_{\alpha}\alpha^{N-1}$ (a constant greater than 0 that depends only on $\alpha$ and $N$), 
we see that if $v=w/c_{\alpha}$, then $v$ satisfies the conclusion of the lemma.
\\
\end{proof}

\begin{lemma}
Choose $\alpha,r>0$. Suppose $\nu_1,\nu_2\in S$ and $|\nu_1-\nu_2|<\frac{\alpha}{1+\alpha}$. Then for each $t\in(0,r]$ we have 
the following inequality
\[
	t<(1+\alpha)(r-|t\nu_1-r\nu_2|).
\]
\end{lemma}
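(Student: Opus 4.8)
The plan is to reduce everything to a single application of the triangle inequality, exploiting the fact that $|\nu_2|=1$ and that $t\le r$. First I would rewrite the vector inside the norm by inserting and cancelling $t\nu_2$:
\[
	t\nu_1-r\nu_2 = t(\nu_1-\nu_2)+(t-r)\nu_2 .
\]
Taking norms and using $|\nu_2|=1$ together with $t\le r$ gives
\[
	|t\nu_1-r\nu_2|\le t|\nu_1-\nu_2|+|t-r| = t|\nu_1-\nu_2|+(r-t).
\]

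Next I would rearrange this into a lower bound for the quantity appearing on the right-hand side of the claimed inequality:
\[
	r-|t\nu_1-r\nu_2|\ \ge\ r-(r-t)-t|\nu_1-\nu_2| = t\bigl(1-|\nu_1-\nu_2|\bigr).
\]
Now I would bring in the hypothesis $|\nu_1-\nu_2|<\frac{\alpha}{1+\alpha}$, which is equivalent to $1-|\nu_1-\nu_2|>\frac{1}{1+\alpha}$. Combining this with the previous display and with $t>0$ yields
\[
	r-|t\nu_1-r\nu_2|\ \ge\ t\bigl(1-|\nu_1-\nu_2|\bigr)\ >\ \frac{t}{1+\alpha},
\]
and multiplying through by $1+\alpha>0$ gives exactly $t<(1+\alpha)\bigl(r-|t\nu_1-r\nu_2|\bigr)$, as desired.

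There is essentially no obstacle here: the only point requiring a little care is the bookkeeping of strictness — the first two displays are only $\ge$ inequalities, so the strict inequality must come from the hypothesis $|\nu_1-\nu_2|<\frac{\alpha}{1+\alpha}$ being strict and from $t>0$. (Note also that the chain shows $r-|t\nu_1-r\nu_2|$ is positive, so the factor $1+\alpha$ is applied to a positive quantity, which is consistent with the statement.) This lemma will presumably be used to show that a cone $\Gamma_\alpha(y)$ at a boundary point $y$ is contained in a slightly wider cone based at a nearby boundary point $y'$, since $r-|t\nu_1-r\nu_2|$ is precisely the distance from the point $y'-t\nu_1$ to the sphere $\partial B(y'+r\nu_2,\dots)$-type expression arising from the ball condition of Lemma 3.2.
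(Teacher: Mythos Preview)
Your proof is correct and in fact cleaner than the paper's. The paper proceeds by squaring: it rewrites the desired inequality as $|t\nu_1-r\nu_2|<r-\tfrac{t}{1+\alpha}$, expands $|t\nu_1-r\nu_2|^2$ via the polarization identity $2\langle\nu_1,\nu_2\rangle=2-|\nu_1-\nu_2|^2$, and after some algebra reduces everything to
\[
	|\nu_1-\nu_2|^2 \;<\; 2-\frac{2}{1+\alpha}-\frac{t}{r}\Bigl(1-\frac{1}{(1+\alpha)^2}\Bigr),
\]
whose right-hand side is minimized at $t=r$ where it equals $\bigl(\tfrac{\alpha}{1+\alpha}\bigr)^2$, matching the hypothesis. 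Your route avoids the squaring entirely by the decomposition $t\nu_1-r\nu_2=t(\nu_1-\nu_2)+(t-r)\nu_2$ and a single triangle inequality; this uses only $|\nu_2|=1$ (not $|\nu_1|=1$) and keeps the inequality linear throughout. The paper's computation has the virtue of producing an exact equivalent condition, but for the purpose of the lemma your argument is shorter and loses nothing. Your closing remark about the intended application is on target: the lemma is used in Lemma~6.4 to place points of the form $y-t\nu_x$ inside the cone $\Gamma'_\alpha(y)$ via the interior ball from Lemma~3.2.
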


\begin{proof}
Let $t\in(0,r]$; because $|\nu_1|=|\nu_2|=1$, we have the following sequence of equivalent inequalities

\[
	t<(1+\alpha)(r-|t\nu_1-r\nu_2|)
\]
\[
	|t\nu_1-r\nu_2|<r-\frac{t}{1+\alpha}
\]
\[
	t^{2}-2rt\langle\nu_1,\nu_2\rangle+r^{2}<\left(r-\frac{t}{1+\alpha}\right)^{2} 
\]
\[
	t^{2}-rt(2-|\nu_1-\nu_2|^{2})<\left(\frac{t}{1+\alpha}\right)^{2}-\frac{2rt}{1+\alpha}
\]
\[
	|\nu_1-\nu_2|^{2}<2-\frac{2}{1+\alpha}-\frac{t}{r}\left(1-\frac{1}{(1+\alpha)^{2}}\right).
\]
Because $t\in(0,r]$, we have
\[
	\left(\frac{\alpha}{1+\alpha}\right)^{2}\leq 2-\frac{2}{1+\alpha}-\frac{t}{r}\left(1-\frac{1}{(1+\alpha)^{2}}\right),
\]	
and this gives the conclusion of the lemma.
\\
\end{proof}

Now let $r$ be the constant from Lemma 3.2 and let $r_0=r/4$. We may assume, that $r_0\leq1$. For $y\in\partial D$ denote
\[
	B_y=B(y-r_0\nu_y,r_0),\quad B'_y=B(y+r_0\nu_y,r_0)
\] 
and
\[
	\Gamma'_{\alpha}(y)=\left\{x\in B_y:|x-y|<(1+\alpha)\dist(x,\partial B_y)\right\}.
\]
Obviously 
\[
	\dist(x,\partial B_y)=r_0-|y-r_0\nu_y-x|,
\]
and because $B_y\subset D$, we have 
\[
	\Gamma'_{\alpha}(y)\subset\Gamma_{\alpha}(y).
\]

\begin{lemma}
Let $\alpha>0$.There exists a positive constant $d=d(\alpha)$ such that for every $x,y\in\partial D$ and $|x-y|\leq d$ we have
\medskip
\begin{enumerate}
	\item $\forall t\in (0,r_0]$
	\[
		y-t\nu_x\in\Gamma'_{\alpha}(y).
	\]	
	\item $\forall t,s\in (0,r_0]$, $t<s$
	\[
		\delta(y-s\nu_x)-\delta(y-t\nu_x)\geq\frac{s-t}{1+\alpha}.
	\]	
\end{enumerate}
\end{lemma}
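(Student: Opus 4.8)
The plan is to reduce everything to the two geometric facts already available: the ball condition (Lemma 3.2), which tells us that near a boundary point $y$ the domain $D$ looks, up to second order, like the half-space cut out by the tangent plane at $y$, and the Lipschitz estimate $|\nu_x-\nu_y|\le c_0|x-y|$ for the normal field. The rough idea is that if $x,y\in\partial D$ are close, then $\nu_x$ is close to $\nu_y$, so the segment $y-t\nu_x$ (for $t\in(0,r_0]$) is a small perturbation of the segment $y-t\nu_y$, which by construction is the center-line of the inner ball $B_y=B(y-r_0\nu_y,r_0)$. First I would choose $d=d(\alpha)$ so small that $c_0 d<\frac{\alpha}{1+\alpha}$; then for $|x-y|\le d$ we have $|\nu_x-\nu_y|<\frac{\alpha}{1+\alpha}$, which is exactly the hypothesis of Lemma 6.3 with $\nu_1=\nu_x$, $\nu_2=\nu_y$, $r=r_0$.

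For part 1, I would apply Lemma 6.3 directly: for $t\in(0,r_0]$ it gives $t<(1+\alpha)(r_0-|t\nu_x-r_0\nu_y|)$. Now $y-t\nu_x\in\overline{B}_y$ iff $|(y-t\nu_x)-(y-r_0\nu_y)|=|r_0\nu_y-t\nu_x|\le r_0$, and since $|r_0\nu_y-t\nu_x|\le r_0-\frac{t}{1+\alpha}<r_0$ this holds, so $y-t\nu_x\in B_y$. Moreover $\dist(y-t\nu_x,\partial B_y)=r_0-|r_0\nu_y-t\nu_x|$ and $|(y-t\nu_x)-y|=t<(1+\alpha)\dist(y-t\nu_x,\partial B_y)$ by Lemma 6.3, so $y-t\nu_x\in\Gamma'_\alpha(y)$, as desired. (One should also note $\Gamma'_\alpha(y)\subset\Gamma_\alpha(y)$, already recorded in the text, in case the statement is meant to land inside $D$'s own cone.)

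For part 2, I would compare $\delta$ along the segment with $\dist(\cdot,\partial B_y)$. Since $B_y\subset D$, for any point $w\in B_y$ we have $\delta(w)\ge\dist(w,\partial B_y)$; applying this at $w=y-s\nu_x$ gives $\delta(y-s\nu_x)\ge r_0-|r_0\nu_y-s\nu_x|$. For the upper bound on $\delta(y-t\nu_x)$ I would use the outer ball $B'_y=B(y+r_0\nu_y,r_0)\subset D^c$: the point $y-t\nu_x$ has distance to $\partial B'_y$ equal to $|y+r_0\nu_y-(y-t\nu_x)|-r_0=|r_0\nu_y+t\nu_x|-r_0$ (note $y-t\nu_x$ lies outside $\overline{B}'_y$ for small $t$), hence $\delta(y-t\nu_x)\le|r_0\nu_y+t\nu_x|-r_0$. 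Subtracting and using $|\nu_x|=|\nu_y|=1$, both $|r_0\nu_y\mp t\nu_x|^2=r_0^2\mp 2r_0 t\langle\nu_x,\nu_y\rangle+t^2$, a direct algebraic manipulation — of exactly the kind already done in the proof of Lemma 6.3 — should reduce $\delta(y-s\nu_x)-\delta(y-t\nu_x)\ge\frac{s-t}{1+\alpha}$ to an inequality of the form $|\nu_x-\nu_y|^2\le(\text{something controlled by }\alpha\text{ and }s/r_0,t/r_0)$, which holds once $d$ is chosen small enough (shrinking $d$ further if necessary, still depending only on $\alpha$ and $r_0$).

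The main obstacle is the bookkeeping in part 2: making sure the two-sided estimate $r_0-|r_0\nu_y-s\nu_x|\le\delta(y-s\nu_x)$ and $\delta(y-t\nu_x)\le|r_0\nu_y+t\nu_x|-r_0$ combine to give a clean lower bound with the sharp constant $\frac1{1+\alpha}$, rather than a constant that degrades as $t,s\to r_0$. I expect this forces the choice $d=d(\alpha)$ to absorb a term like $\frac{r_0}{(1+\alpha)^2}$, exactly as in Lemma 6.1 and Lemma 6.3, and the computation should parallel those almost line for line; part 1 is essentially immediate from Lemma 6.3 once $d$ is fixed.
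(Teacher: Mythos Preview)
Part 1 is fine and matches the paper's argument essentially line for line.

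Part 2 has a genuine gap. Your plan is to bound $\delta(y-s\nu_x)$ from below by $\dist(y-s\nu_x,\partial B_y)=r_0-|r_0\nu_y-s\nu_x|$ and $\delta(y-t\nu_x)$ from above by $\dist(y-t\nu_x,\partial B'_y)=|r_0\nu_y+t\nu_x|-r_0$, then subtract. But both comparisons are anchored at the \emph{fixed} boundary point $y$, and the accumulated loss is of order $r_0|\nu_x-\nu_y|$, independent of $s-t$. Concretely: set $\eta=|\nu_x-\nu_y|>0$. At $s=r_0$ your lower bound is $r_0-r_0\sqrt{2-2\langle\nu_x,\nu_y\rangle}=r_0(1-\eta)$, while your upper bound at $t=r_0$ is $r_0\sqrt{2+2\langle\nu_x,\nu_y\rangle}-r_0=r_0(\sqrt{4-\eta^2}-1)\approx r_0(1-\eta^2/4)$; their difference is $\approx -r_0\eta<0$. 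By continuity, for $t$ slightly below $s=r_0$ your bound is still negative while $\frac{s-t}{1+\alpha}>0$. Hence no choice of $d=d(\alpha)>0$ rescues the inequality for all admissible $s,t$; the ``bookkeeping'' you anticipate cannot close.

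The fix, and what the paper does, is to recenter: instead of comparing to balls tangent at $y$, compare $y-s\nu_x$ to an inner ball tangent at $\pi(y-t\nu_x)$, the nearest boundary point to $y-t\nu_x$. Writing
\[
y-s\nu_x=\pi(y-t\nu_x)-\delta(y-t\nu_x)\,\nu_{\pi(y-t\nu_x)}-(s-t)\nu_x,
\]
one needs $|\nu_{\pi(y-t\nu_x)}-\nu_x|<\frac{\alpha}{1+\alpha}$, which follows from the Lipschitz estimates for $\pi$ (Lemma 3.4) and for $\nu$ once $d$ is taken small enough. Lemma 6.3, applied with the pair $(\nu_x,\nu_{\pi(y-t\nu_x)})$, $r=r_0$ and parameter $s-t$, then gives $\delta(y-s\nu_x)-\delta(y-t\nu_x)\ge r_0-|r_0\nu_{\pi(y-t\nu_x)}-(s-t)\nu_x|>\frac{s-t}{1+\alpha}$ directly, with no loss as $s,t\to r_0$. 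The missing idea in your proposal is precisely this moving anchor point.
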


\begin{proof}

Recall that 
\[
	|\nu_x-\nu_y|\leq c_0|x-y| \quad \forall x,y\in\partial D. 
\]
\\
We may assume, that $c_0\geq 1$. So let $x,y\in\partial D$ and suppose
\[
	|x-y|<\frac{\alpha}{c_0(1+\alpha)};
\]  
then
\[
	|\nu_x-\nu_y|<\frac{\alpha}{1+\alpha},
\]
\\
and because $\nu_x,\nu_y\in S$, by Lemma 6.3 we have

\[
	|y-t\nu_x-y|=t<(1+\alpha)(r_0-|t\nu_x-r_0\nu_y|)=(1+\alpha)(r_0-|y-r_0\nu_y-(y-t\nu_x)|)
\]

\[
	=(1+\alpha)\dist(y-t\nu_x,\partial B_y) \quad \forall t\in (0,r_0].
\]
\\
This gives 1. 

Now let $\pi$ be the orthogonal projection; then for every $x,y\in D_{r_0}$ we have

\[
	|\pi(x)-\pi(y)|\leq 4|x-y|,
\]
\\
by Lemma 3.4. So let $x,y\in\partial D$ and assume 

\[
	|x-y|< \frac{\alpha}{8c_0^2(1+\alpha)}.
\]
Then we have
\[
	\langle\nu_x,\nu_y\rangle=1-\frac{|\nu_x-\nu_y|^{2}}{2}\geq1-\frac{c_0^{2}|x-y|^{2}}{2}>0,
\]
\\
and choosing $t\in (0,r_0)$ we conclude

\[
	|y-t\nu_x-(y-t\langle\nu_x,\nu_y\rangle\nu_y)|=t|\langle\nu_x,\nu_y\rangle\nu_y-\nu_x|=
	t\left(1-\langle\nu_x,\nu_y\rangle^{2}\right)^{\frac{1}{2}}
\]

\[
	=t\left[1-\left(1-\frac{|\nu_x-\nu_y|^{2}}{2}\right)^{2}\right]^{\frac{1}{2}}
	=t\left(|\nu_x-\nu_y|^{2}-\frac{|\nu_x-\nu_y|^{4}}{4}\right)^{\frac{1}{2}}
\]

\[
	=t|\nu_x-\nu_y|\left(1-\frac{|\nu_x-\nu_y|^{2}}{4}\right)^{\frac{1}{2}}\leq r_0|\nu_x-\nu_y|\leq c_0|x-y|
	< \frac{\alpha}{8c_0(1+\alpha)}.
\]
\\
In particular, since $t\langle\nu_x,\nu_y\rangle\in (0,r_0)$, we have
	
\[
	|\pi(y-t\nu_x)-\pi(y-t\langle\nu_x,\nu_y\rangle\nu_y)|=|\pi(y-t\nu_x)-y|<\frac{\alpha}{2c_0(1+\alpha)}.
\]
\\
Therefore

\[
	|\nu_{\pi(y-t\nu_x)}-\nu_y|\leq c_0|\pi(y-t\nu_x)-y|<\frac{\alpha}{2(1+\alpha)}.
\]
\\

Let $s\in(t,r_0]$; because

\[
	y-s\nu_x=\pi(y-t\nu_x)-\delta(y-t\nu_x)\nu_{\pi(y-t\nu_x)}-(s-t)\nu_x,
\]
\\
and

\[
	|\nu_{\pi(y-t\nu_x)}-\nu_x|\leq |\nu_{\pi(y-t\nu_x)}-\nu_y|+|\nu_y-\nu_x|<\frac{\alpha}{1+\alpha},
\]
\\
where $\nu_{\pi(y-t\nu_x)},\nu_x\in S$, by Lemma 6.3 we have

\[
	y-s\nu_x\in B(\pi(y-t\nu_x)-[\delta(y-t\nu_x)+r_0]\nu_{\pi(y-t\nu_x)},r_0).
\]
\\
Because
\[
	B(\pi(y_t)-(\delta_t+r_0)\nu_{\pi(y_t)},r_0)\subset
	B(\pi(y_t)-(\delta_t+r_0)\nu_{\pi(y_t)},\delta_t+r_0)
\]

\[
	\subset B(\pi(y_t)-2r_0\nu_{\pi(y_t)},2r_0)\subset D,\quad \delta_t:=\delta(y-t\nu_x),\quad y_t:=y-t\nu_x,
\]
\\
we have

\[
	\delta(y-s\nu_x)-\delta(y-t\nu_x)\geq r_0-|y-s\nu_x-[\pi(y-t\nu_x)-(\delta(y-t\nu_x)+r_0)\nu_{\pi(y-t\nu_x)}]|
\]

\[
	=r_0-|r_0\nu_{\pi(y-t\nu_x)}-(s-t)\nu_x|;
\]
\\
one more time by Lemma 6.3 we conclude

\[
	r_0-|r_0\nu_{\pi(y-t\nu_x)}-(s-t)\nu_x|>\frac{s-t}{1+\alpha},
\]
\\
what gives 2.

\end{proof}

For $\alpha>0$ let $d_{\alpha}$ be the constant from Lemma 6.4; assume additionally, that  $d_{\alpha}<\rho$, where $\rho$ is 
the constant from Lemma 3.1. Because $\partial D$ is compact, there exists a finite family of balls 
$\left\{B(y_i,d_{\alpha}):i=1,...,m\right\}$, where $y_i\in\partial D$ for each $i=1,...,m$, such that 
\[
	\partial D\subset\bigcup^{m}_{i=1}B\left(y_i,d_{\alpha}\right);
\]
hence
\[
	\partial D=\bigcup^{m}_{i=1}K\left(y_i,d_{\alpha}\right).
\]
\medskip

For $i\in\left\{1,...,m\right\}$ denote $\nu_i:=\nu_{y_i}$ and for $r_0$ as before, let

\[
	D^{\alpha}_{i}=\left\{y-t\nu_i:y\in K\left(y_i,d_{\alpha}\right),t\in\left(0,\frac{r_0}{2}\right)\right\}.
\]

\begin{lemma}
For every $\alpha>0$ and $i\in\left\{1,...,m\right\}$ we have
\begin{enumerate}
	\item $D^{\alpha}_{i}$ is an open subset of $D$ and $(\partial D^{\alpha}_{i})\cap(\partial D)=\overline{K}(y_i,d_{\alpha})$.
	\item Let $E$ be the closed subset of $\partial D$ and suppose $E\subset K\left(y_i,d_{\alpha}\right)$.
	Let 
	\[
		\Omega = \bigcup_{y\in E}\Gamma_{\alpha}(y);
	\]
	then there exists $\varepsilon_0>0$, such that 
	\[
		\Omega\cap\left\{x\in D:\delta(x)<\varepsilon_0\right\}\subset D^{\alpha}_{i}.
	\]
	
\end{enumerate}

\end{lemma}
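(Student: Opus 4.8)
The plan is to carry everything out inside the local chart at $y_i$ given by Lemma~3.1. Fix $i$; choosing the coordinate system as in that lemma and rotating (as in the proof of Lemma~3.2) so that $\nu_i=e_N$, we get a $C^2$ function $\varphi_{y_i}\colon B_{N-1}(\overline{y}_i,\rho)\to\RR$ whose graph lies in $\partial D$, with $K(y_i,\rho)=\{(\overline{z},\varphi_{y_i}(\overline{z})):\overline{z}\in B_{N-1}(\overline{y}_i,\rho)\}\cap B_N(y_i,\rho)$ and $\widetilde{M}:=\sup|\nabla\varphi_{y_i}|<\infty$; as in the proof of Lemma~3.6 we may assume in addition that $D\cap B_N(y_i,\rho)$ is the set of points of $B_N(y_i,\rho)$ lying below this graph. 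Since $d_{\alpha}<\rho$, every point of $K(y_i,d_{\alpha})$ has the form $(\overline{z},\varphi_{y_i}(\overline{z}))$, so, writing $\Pi$ for the projection onto the first $N-1$ coordinates, the map $\Psi(\overline{z},t)=(\overline{z},\varphi_{y_i}(\overline{z}))-t\nu_i=(\overline{z},\varphi_{y_i}(\overline{z})-t)$ takes $\Pi(K(y_i,d_{\alpha}))\times(0,r_0/2)$ onto $D^{\alpha}_i$, and $\Psi$ is a $C^1$-diffeomorphism of $B_{N-1}(\overline{y}_i,\rho)\times\RR$ onto the open set $B_{N-1}(\overline{y}_i,\rho)\times\RR\subset\RR^N$, with inverse $(\overline{x},x_N)\mapsto(\overline{x},\varphi_{y_i}(\overline{x})-x_N)$.

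For assertion~1: $D^{\alpha}_i\subset D$ follows from Lemma~6.4(1), since for $y\in K(y_i,d_{\alpha})$ we have $|y-y_i|<d_{\alpha}$, hence $y-t\nu_i\in\Gamma'_{\alpha}(y)\subset B_y\subset D$ for all $t\in(0,r_0]$. Openness follows because $\Psi$ is an open map and $\Pi(K(y_i,d_{\alpha}))=\{\overline{z}\in B_{N-1}(\overline{y}_i,\rho):|(\overline{z},\varphi_{y_i}(\overline{z}))-y_i|<d_{\alpha}\}$ is open, so $D^{\alpha}_i=\Psi(\Pi(K(y_i,d_{\alpha}))\times(0,r_0/2))$ is open. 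For the boundary identity, write $D^{\alpha}_i=\Phi(K(y_i,d_{\alpha})\times(0,r_0/2))$ with $\Phi(z,t)=z-t\nu_i$; since $\Phi$ is continuous and $\overline{K}(y_i,d_{\alpha})\times[0,r_0/2]$ is compact, $\overline{D^{\alpha}_i}=\Phi(\overline{K}(y_i,d_{\alpha})\times[0,r_0/2])$. A point $z-t\nu_i$ of this set with $t>0$ has $|z-y_i|\le d_{\alpha}$, so Lemma~6.4(1) puts it in the open set $D$, hence off $\partial D$; therefore $\overline{D^{\alpha}_i}\cap\partial D=\overline{K}(y_i,d_{\alpha})$, and since $D^{\alpha}_i$ is open and disjoint from $\partial D$ we conclude $\partial D^{\alpha}_i\cap\partial D=(\overline{D^{\alpha}_i}\setminus D^{\alpha}_i)\cap\partial D=\overline{K}(y_i,d_{\alpha})$.

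For assertion~2: $E$ is a compact subset of $\partial D$ disjoint from the closed set $\{z\in\partial D:|z-y_i|\ge d_{\alpha}\}$, so $d':=\max_{y\in E}|y-y_i|<d_{\alpha}$, and I claim $\varepsilon_0:=\min\{r_0/(2(\widetilde{M}+1)),\,(d_{\alpha}-d')/(\widetilde{M}+2+\alpha)\}$ works. Take $x\in\Omega$ with $\delta(x)<\varepsilon_0$, say $x\in\Gamma_{\alpha}(y)$ with $y\in E$, so $|x-y|<(1+\alpha)\delta(x)$ and $|x-y_i|<(1+\alpha)\varepsilon_0+d'<\rho$; then $x\in D\cap B_N(y_i,\rho)$, so $x=(\overline{x},x_N)$ with $x_N<\varphi_{y_i}(\overline{x})$, and we set $\tau:=\varphi_{y_i}(\overline{x})-x_N>0$. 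Since $\delta(x)<r$ we may use $x=\pi(x)-\delta(x)\nu_{\pi(x)}$; from $|\pi(x)-y_i|\le\delta(x)+|x-y_i|<(2+\alpha)\varepsilon_0+d'<d_{\alpha}$ we get $\pi(x)\in K(y_i,d_{\alpha})$, say $\pi(x)=(\overline{p},\varphi_{y_i}(\overline{p}))$, whence $|\overline{x}-\overline{p}|\le\delta(x)$ and $x_N=\varphi_{y_i}(\overline{p})-\delta(x)(\nu_{\pi(x)})_N$, so $\tau\le|\varphi_{y_i}(\overline{x})-\varphi_{y_i}(\overline{p})|+\delta(x)\le(\widetilde{M}+1)\delta(x)<r_0/2$. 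Finally $(\overline{x},\varphi_{y_i}(\overline{x}))\in\partial D$ and $|(\overline{x},\varphi_{y_i}(\overline{x}))-y_i|\le\tau+|x-y_i|<(\widetilde{M}+2+\alpha)\varepsilon_0+d'\le d_{\alpha}$, so $(\overline{x},\varphi_{y_i}(\overline{x}))\in K(y_i,d_{\alpha})$, and since $x=(\overline{x},\varphi_{y_i}(\overline{x}))-\tau\nu_i$ with $\tau\in(0,r_0/2)$ we obtain $x\in D^{\alpha}_i$.

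The only delicate point is the uniform bookkeeping in assertion~2: one must keep $x$, $\pi(x)$ and the $\nu_i$-foot $(\overline{x},\varphi_{y_i}(\overline{x}))$ all inside the single chart $B_N(y_i,\rho)$ (which forces $\varepsilon_0$ to shrink with the gap $d_{\alpha}-d'$), and convert the orthogonal representation $x=\pi(x)-\delta(x)\nu_{\pi(x)}$ into the vertical representation $x=(\overline{x},\varphi_{y_i}(\overline{x}))-\tau\nu_i$ with $\tau\le(\widetilde{M}+1)\delta(x)$, which is where the gradient bound on $\varphi_{y_i}$ is used; everything else reduces to Lemma~6.4. (Openness in assertion~1 could equally be read off from the inverse function theorem, the Jacobian of $\Psi$ being identically $-1$ in these coordinates.)
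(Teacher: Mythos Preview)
Your proof is correct and follows essentially the same approach as the paper: work in the local chart at $y_i$ with $\nu_i=e_N$, use Lemma~6.4(1) to get $D^{\alpha}_i\subset D$, represent points as $(\overline{x},\varphi_{y_i}(\overline{x}))-\tau e_N$, and for part~2 choose $\varepsilon_0$ small enough to force both $\tau<r_0/2$ and the foot $(\overline{x},\varphi_{y_i}(\overline{x}))$ into $K(y_i,d_\alpha)$. Your openness argument via the diffeomorphism $\Psi$ is a bit slicker than the paper's explicit construction of neighborhoods, and you bound $\tau$ through $\pi(x)$ rather than directly through the vertex $y\in E$ as the paper does, but these are cosmetic variations of the same computation.
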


\begin{proof}
Choose $\alpha>0$ and $i\in\left\{1,...,m\right\}$. Without loss of generality we may assume, that $y_i=0$ and 
$\nu_i=e_N=(0,...,0,1)$. Obviously 

\[
	\overline{D^{\alpha}_{i}}=
	\left\{y-te_N:y\in\overline{K}\left(0,d_{\alpha}\right),t\in\left[0,\frac{r_0}{2}\right]\right\}.
\]
\\
By Lemma 6.4,
 
\[
	\forall y\in \overline{K}\left(0,d_{\alpha}\right)\forall t\in(0,r_0]\quad y-te_N\in \Gamma'_{\alpha}(y),
\]
\\
in particular $y-te_N\in B_y$. Hence $D^{\alpha}_{i}\subset D$. Moreover, 
$\overline{D^{\alpha}_{i}}\backslash\overline{K}(0,d_{\alpha})\subset D$, 
and hence $(\partial D^{\alpha}_{i})\cap(\partial D)=\overline{K}(0,d_{\alpha})$.

Now let $x\in D^{\alpha}_{i}$. Then $x=y-te_N$, where $y\in K\left(0,d_{\alpha}\right)$ and 
${t\in\left(0,\frac{r_0}{2}\right)}$. By Lemma 3.1, since $d_{\alpha}<\rho$, $y=(\overline{y},g(\overline{y}))$, 
where $\overline{y}\in\RR^{N-1}$ is the projection of $y$ into $\RR^{N-1}\times\left\{0\right\}$ (since $\nu_0=e_N$), 
and $g$ is a real-valued, $C^2$ function defined on $B_{N-1}(0,\rho)$. Strictly, 
$\overline{K}\left(0,d_{\alpha}\right)$ is a part of the graph of function $g$. Moreover, since $\nu_0=e_N$, we may assume that 

\[
	B_N(0,\rho)\cap D=B_N(0,\rho)\cap\left\{(\overline{x},x_N):
	\overline{x}\in B_{N-1}(\overline{0},\rho)\wedge x_N<g(\overline{x})\right\}.
\]
\\
Therefore 
$x=(\overline{y},g(\overline{y})-t)$, and $\overline{x}=\overline{y}$. Then there exists $\varepsilon_1>0$, such that if 
$|x-x'|<\varepsilon_1$, then $y'=(\overline{x}',g(\overline{x}'))\in K\left(0,d_{\alpha}\right)$. Moreover, 
$x'=y'-(g(\overline{x}')-x_N')e_N$, and there exists $\varepsilon_2>0$, such that 
$t'=g(\overline{x}')-x_N'\in\left(0,\frac{r_0}{2}\right)$ whenever $|x-x'|<\varepsilon_2$. Thus if 
$\varepsilon_3=\min\left\{\varepsilon_1,\varepsilon_2\right\}$, and $|x-x'|<\varepsilon_3$, then $x'=y'-t'e_N$, where 
$y'\in K\left(0,d_{\alpha}\right)$ and $t'\in\left(0,\frac{r_0}{2}\right)$, so $B(x,\varepsilon_3)\subset D^{\alpha}_{i}$. 
This gives 1. 

Now suppose $E\subset K\left(0,d_{\alpha}\right)$, $E=\overline{E}$ and let 

\[
	\Omega = \bigcup_{y\in E}\Gamma_{\alpha}(y).
\]
\\
By Lemma 3.1, there exists $M>0$, such that $|g(\overline{x})-g(\overline{y})|\leq M|\overline{x}-\overline{y}|$ for every 
$\overline{x},\overline{y}\in B_{N-1}(\overline{0},\rho)$. Let $x\in\Omega$ and suppose 

\[
	\delta(x)<\varepsilon_0=\min\left\{\frac{\dist\left[E,\partial B\left(0,d_{\alpha}\right)\right]}{\sqrt{1+M^2}(1+\alpha)},
	\frac{r_0}{2(1+M)(1+\alpha)}\right\}.
\]
\\
Then for some $y\in E$ we have 

\[
	|x-y|<(1+\alpha)\delta(x)<\dist\left[E,\partial B\left(0,d_{\alpha}\right)\right].
\]
Thus
\[
	|x|\leq|x-y|+|y|<\dist\left[E,\partial B\left(0,d_{\alpha}\right)\right]+|y|\leq d_{\alpha}<\rho,
\]
\\
in particular $|\overline{x}|<\rho$. Hence $x=(\overline{x},g(\overline{x}))-(g(\overline{x})-x_N)e_N$, and it suffices to show 
that $(\overline{x},g(\overline{x}))\in K\left(0,d_{\alpha}\right)$ and $g(\overline{x})-x_N\in\left(0,\frac{r_0}{2}\right)$. 
We have

\[
	|(\overline{x},g(\overline{x}))|\leq |(\overline{x},g(\overline{x}))-(\overline{y},g(\overline{y}))|+|y|=
	\sqrt{|\overline{x}-\overline{y}|^2+|g(\overline{x}))-g(\overline{y})|^2}+|y|
\]

\[
	\leq\sqrt{1+M^2}|\overline{x}-\overline{y}|+|y|\leq\sqrt{1+M^2}|x-y|+|y|
\]

\[
	<\sqrt{1+M^2}(1+\alpha)\delta(x)+|y|<\dist\left[E,\partial B\left(0,d_{\alpha}\right)\right]+|y|\leq d_{\alpha}.
\]
\\
Moreover, $g(\overline{x})-x_N>0$ since $x\in B(0,\rho)\cap D$, and

\[
	g(\overline{x})-x_N\leq |g(\overline{x})-g(\overline{y})|+|y_N-x_N|\leq
	M|\overline{x}-\overline{y}|+|x-y|\leq(M+1)|x-y|<\frac{r_0}{2}.
\]
\\
Therefore $x\in D^{\alpha}_{i}$, what gives 2.
\\
\end{proof}

The next lemma is the crux of the proof of Theorem 6.1.

\begin{lemma}
Let $E\subset\partial D$ be Borel measurable, let ${\alpha>0}$, and let \[\Omega = \bigcup_{y\in E}\Gamma_{\alpha}(y).\]
Suppose $u$ is harmonic on $D$ and bounded on $\Omega$. Then for almost every $y\in E$, $\lim u(x)$ exists as 
$x\rightarrow y$ within $\Gamma_{\alpha}(y)$.
\end{lemma}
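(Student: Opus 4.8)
The plan is to perform the usual reductions, bring in the positive harmonic function $v$ of Lemma 6.2 and the localization of Lemma 6.5, note that $1\pm u+v$ are positive and harmonic on $\Omega$, and reduce the lemma to a Fatou-type statement for positive harmonic functions on the ``sawtooth'' region $\Omega$.

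First the reductions. Splitting $u$ into real and imaginary parts, we may take $u$ real. Since $\sigma$ is a finite regular Borel measure and $\Gamma_\beta(y)\subset\Gamma_\alpha(y)$ for $\beta<\alpha$, the set $E$ is, modulo a $\sigma$-null set, a countable union of compact sets, and the hypothesis of the lemma is inherited by every subset of $E$; so we may assume $E$ compact and, after rescaling $u$, that $|u|<1$ on $\Omega=\bigcup_{y\in E}\Gamma_\alpha(y)$, hence $|u|\le1$ on $\overline\Omega$. With $d_\alpha$ the constant from the paragraph preceding Lemma 6.5, cover $\partial D$ by finitely many balls $B(y_i,d_\alpha/2)$; each $E\cap\overline{B}(y_i,d_\alpha/2)$ is a compact subset of $K(y_i,d_\alpha)$ at positive distance from its relative boundary, so by countable additivity of $\sigma$ it suffices to prove the lemma for such a set with $i$ fixed. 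Apply Lemma 6.2: it yields a positive harmonic $v$ on $D$, bounded since the function $w$ of that proof equals $P_D[\chi_{E^c}]\le P_D[1]=1$, with $v\ge1$ on $(\partial\Omega)\cap D$ and with nontangential limit $0$ at every point of a set $G\subset E$ satisfying $\sigma(E\setminus G)=0$. Finally, the second part of Lemma 6.5 furnishes $\varepsilon_0>0$ with $\Omega\cap\{\delta<\varepsilon_0\}\subset D_i^\alpha$.

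Now $\varphi_\pm:=1\pm u+v$ are harmonic on $D$ and, because $|u|<1$ and $v>0$ on $\Omega$, strictly positive on $\Omega$; moreover $\varphi_\pm\ge1$ on $(\partial\Omega)\cap D$. The core step is the assertion that \emph{every positive harmonic function on $\Omega$ that is bounded below by a positive constant on $(\partial\Omega)\cap D$ has a nontangential limit at $\sigma$-almost every point of $E$}. Granting this for $\varphi_+$ and $\varphi_-$, both have nontangential limits at $\sigma$-a.e.\ point of $E$, hence so does $\varphi_+-\varphi_-=2u$, which is exactly the conclusion. To prove the core step I would use Lemma 6.5 to confine the analysis to $D_i^\alpha$ and to $\{\delta<\varepsilon_0\}$, where by Lemma 3.1 the boundary of $D$ is a uniform $C^2$ graph and $\Omega$ is comparable to a sawtooth over a flat boundary piece; there, the function $v$ (large on $(\partial\Omega)\cap D$, nontangentially vanishing $\sigma$-a.e.\ on $E$) is used to neglect the contribution of the interior boundary of $\Omega$ and to represent the given positive harmonic function, near $E$, as an integral against a kernel comparable to the Poisson kernel of $D$ (Lemmas 3.7, 6.1); one then runs the maximal-function argument of Section 5 --- the Wiener covering (Lemma 5.1), the weak-type bound (Lemma 5.2), and the domination by the Hardy--Littlewood maximal function of the boundary density inside $\Gamma_\alpha$ (as in Lemma 5.3) --- together with the Lipschitz projection of Lemma 3.4, now localized inside $\Omega$, to conclude existence of the nontangential limit $\sigma$-a.e.\ on $E$.

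The main obstacle is exactly this core step. Because $\Omega$ is, for arbitrary $E$, a region whose boundary inside $D$ may be wildly irregular, Theorem 5.1 cannot be invoked for $\Omega$ directly; one has to rebuild its proof over $\Omega$, and $v$ is precisely the device that renders the behaviour at the irregular boundary $(\partial\Omega)\cap D$ harmless, while Lemma 6.5 is what lets the whole argument be carried out inside a single coordinate patch where $\partial D$ is a uniform $C^2$ graph. The recurring subtlety is to keep every ``almost everywhere'' and limit statement referenced to $\sigma$ on $E$ rather than to some unspecified full-measure set.
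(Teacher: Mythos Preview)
Your reductions are fine and parallel the paper's. The gap is the ``core step'': as you yourself flag, you have reduced Lemma~6.6 to a Fatou-type theorem for positive harmonic functions on the sawtooth region $\Omega$, and then only sketched how one might ``rebuild'' Section~5 over $\Omega$. But that rebuilding is exactly the hard part, and your sketch does not deliver it. The phrase ``represent the given positive harmonic function, near $E$, as an integral against a kernel comparable to the Poisson kernel of $D$'' hides the whole difficulty: nothing in the paper's toolbox (Lemmas~3.7, 5.1--5.3, 6.1) gives you a Poisson-type representation for harmonic functions on $\Omega$, whose interior boundary $(\partial\Omega)\cap D$ is in general highly irregular. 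Worse, your functions $\varphi_\pm=1\pm u+v$ are harmonic on all of $D$ and bounded on $\Omega$; that is precisely the hypothesis of Lemma~6.6 after the reductions, so the positivity you gained is not actually a reduction --- you have restated the lemma rather than reduced it.

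The paper avoids this circle entirely by a concrete translation trick. Working in the patch $D_i^\alpha$, it shifts $u$ inward along the fixed direction $\nu_i$ to obtain $u(\,\cdot\,-\tfrac{r_0}{k}\nu_i)$, which is continuous up to $\partial D$ on the patch; its boundary trace $f_i^k$ lies in $L^\infty(\partial D)$, so $P_D[f_i^k]$ is a genuine Poisson integral on $D$ to which Theorem~5.1 applies directly. The barrier $v$ from Lemma~6.2 is then used not to neglect a boundary contribution in some integral representation on $\Omega$, but simply via the maximum principle on $\Omega_i|_{\varepsilon_0}$ to sandwich $u-P_D[f_i]$ between $\pm 2\tilde v$. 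Since $P_D[f_i]$ has nontangential limits a.e.\ by Theorem~5.1 and $\tilde v\to 0$ nontangentially a.e.\ on $E_i$, the conclusion follows. The key idea you are missing is this comparison of $u$ with a Poisson integral on $D$ (not on $\Omega$), manufactured by translation and a weak$^*$ limit.
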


\begin{proof}
Because
\[
	\partial D=\bigcup^{m}_{i=1}K\left(y_i,d_{\alpha}\right),
\]
it suffices to show, that the conclusion of the lemma holds for the set
\[
	E_i=K\left(y_i,d_{\alpha}\right)\cap E,
\]
where $i\in\left\{1,...,m\right\}$ is fixed. We may also assume that $u$ is real valued and $|u|\leq1$ on $\Omega$.
Suppose first, that $E_i$ is closed, and let 
\[
	\Omega_i = \bigcup_{y\in E_i}\Gamma_{\alpha}(y).
\]
For $k\in\NN$, let

\[
	E^{k}_{i}=\left\{\left[K\left(y_i,d_{\alpha}\right)-\frac{r_0}{k}\nu_i\right]\cap\Omega_i\right\}
	+\frac{r_0}{k}\nu_i.
\]
\\
Because $\Omega_i$ is an open set in $\RR^{N}$, and $K(y_i,d_{\alpha})$ is an open subset of $\partial D$, 
$E^{k}_{i}$ is an open subset of $\partial D$ too. Obviously $E^{k}_{i}\subseteq K(y_i,d_{\alpha})$. By Lemma 6.4, 
for each $y\in E_i$ $y-\frac{r_0}{k}\nu_i\in\Gamma_{\alpha}(y)$; hence $E_i\subseteq E^{k}_{i}$ for every $k$. Let 

\[
	f^{k}_{i}(y)=\chi_{E^{k}_{i}}(y)u\left(y-\frac{r_0}{k}\nu_i\right),\quad y\in\partial D.
\]
\\
Because $y-\frac{r_0}{k}\nu_i\in\Omega$ if $y\in E^{k}_{i}$, $|f^{k}_{i}|\leq1$ on $\partial D$ for every $k$.
By Banach-Alaoglu theorem, there exists a subsequence, which we still denote by $(f^{k}_{i})$, that converges weak$^{\ast}$ 
to some $f_i\in L^{\infty}(\partial D)$. Because $E^{k}_{i}$ is open, each $f^{k}_{i}$ is continuous on $E^{k}_{i}$, and 
thus, by the properties of the Poisson kernel listed in section 3, $P_{D}[f^{k}_{i}]$ extends continuously to $D\cup E^{k}_{i}$.

By Lemma 6.5, $D^{\alpha}_{i}$ is an open subset of $D$, additionally 
$(\partial D^{\alpha}_{i})\cap(\partial D)=\overline{K}(y_i,d_{\alpha})$. Now if $x\in\overline{D^{\alpha}_{i}}\cap D$, 
then

\[
	x-\frac{r_0}{k}\nu_i=y-\left(t+\frac{r_0}{k}\right)\nu_i,
\]
\\
where $y\in\overline{K}(y_id_{\alpha})$ and $t\in\left(0,\frac{r_0}{2}\right]$; $t+\frac{r_0}{k}\leq r_0$ for $k\geq2$, 
thus by Lemma 6.4, $x-\frac{r_0}{k}\nu_i\in D$. Therefore the function $u^{k}_{i}$ given by

\[
	u^{k}_{i}(x)=P_{D}[f^{k}_{i}](x)-u\left(x-\frac{r_0}{k}\nu_i\right),\quad k\geq2,
\]
\\
is harmonic on $D^{\alpha}_{i}$ and continuous on $\overline{D^{\alpha}_{i}}\cap D$. Moreover, $u^{k}_{i}$ extends continuously 
to $(\overline{D^{\alpha}_{i}}\cap D)\cup E^{k}_{i}$, with $u^{k}_{i}=0$ on $E^{k}_{i}$.  We have assumed, that $E_i$ 
is closed, and thus $\overline{\Omega}_i\cap(\partial D)=E_i$; by Lemma 6.5, there exists $\varepsilon_0>0$ such that 

\[
	\Omega_i|_{\varepsilon_0}:= \Omega_i\cap\left\{x\in D:\delta(x)<\varepsilon_0\right\}\subset D^{\alpha}_{i}.
\]
\\
Therefore we conclude, that in particular $u^{k}_{i}$ is harmonic on $\Omega_i|_{\varepsilon_0}$ and 
continuous on $\overline{\Omega_i|_{\varepsilon_0}}$ with $u^{k}_{i}=0$ on $E_i$.

Let $x\in\Omega_i|_{\varepsilon_0}$; then $x=y-t\nu_i$, where $y\in K(y_i,d_{\alpha})$, $t\in\left(0,\frac{r_0}{2}\right)$ and there exists $a\in E_i$, such that $x\in\Gamma_{\alpha}(a)$. For $k\geq2$ we have

\[
	\left|x-\frac{r_0}{k}\nu_i-a\right|=\left|y-t\nu_i-a-\frac{r_0}{k}\nu_i\right|<(1+\alpha)\delta(y-t\nu_i)+\frac{r_0}{k}
\]

\[
	\leq (1+\alpha)\delta\left(y-\left(t+\frac{r_0}{k}\right)\nu_i\right),
\]
\\
where the last inequality is the result of Lemma 6.4. This means, that $x-\frac{r_0}{k}\nu_i\in\Omega_i$ for $k\geq2$, and because $|u|\leq1$ on $\Omega$, we have $|u^{k}_{i}|\leq2$ on $\overline{\Omega_i|_{\varepsilon_0}}$.

Now let $v$ be the function of Lemma 6.2 with respect to $\Omega_i$. Because $v\geq1$ on 
$(\partial\Omega_i)\cap(\partial(\Omega_i|_{\varepsilon_0}))\cap D$, and there exists constant $c>0$, such that $v\geq c$ on 
$\left\{x\in D:\delta(x)=\varepsilon_0\right\}$, thus the function 

\[
	\tilde{v}(x)=\frac{v(x)}{\min\left\{c,1\right\}}
\]
\\
has the same properties as $v$, but with respect to $\Omega_i|_{\varepsilon_0}$. Then 

\[
	\liminf_{x\rightarrow\partial(\Omega_i|_{\varepsilon_0})}(2\tilde{v}-u^{k}_{i})(x)\geq0.
\]
\\
By the minimum principle, $2\tilde{v}-u^{k}_{i}\geq0$ in $\Omega_i|_{\varepsilon_0}$. Letting $k\rightarrow\infty$, we then see 
that $2\tilde{v}-(P_{D}[f_{i}]-u)\geq0$ in $\Omega_i|_{\varepsilon_0}$. Because this argument applies as well to  $2\tilde{v}+u^{k}_{i}$, we conclude that $|P_{D}[f_{i}]-u|\leq 2\tilde{v}$ in $\Omega_i|_{\varepsilon_0}$.

By Theorem 5.1, $P_{D}[f_{i}]$ has nontangential limits almost everywhere on $\partial D$, while Lemma 6.2 asserts $\tilde{v}$ has nontangential limits $0$ almost everywhere on $E_i$. From this and the last inequality, the desired limits for $u$ follow.

Now if $E_i$ is any Borel measurable and bounded set, then for each $k\in\NN$ there exists a closed set $F_k\subseteq E_i$, 
such that $\sigma(E_i\backslash F_k)<\frac{1}{k}$. Let $A_k$ be the set of points $y\in F_k$, such that 
$\lim u(x)$ doesn't exists as $x\rightarrow y$ within $\Gamma_{\alpha}(y)$; then 
	
\[
	\sigma\left(\bigcup^{\infty}_{k=1}A_k\right)\leq\sum^{\infty}_{k=1}\sigma(A_k)=0,
\]
\\
and because $\sigma(E_i\backslash\bigcup^{\infty}_{k=1}F_k)=0$, the conclusion of the lemma holds with respect to $E_i$.
\\
\end{proof}

Let $E\subset\partial D$ be Borel measurable; a point $y\in E$ is said to be a $point$ $of$ $density$ of $E$ provided
\[
	\lim_{r\rightarrow 0}\frac{\sigma(K(y,r)\cap E)}{\sigma(K(y,r))}=1.
\]
\\
By the Lebesque Differentiation Theorem, almost every point of $E$ is a point of density of $E$.

\begin{lemma}
Suppose $E\subset\partial D$ is Borel measurable, $\alpha>0$, and
\[
\Omega = \bigcup_{y\in E}\Gamma_{\alpha}(y).
\]
Suppose $u$ is continuous on $D$ and bounded on $\Omega$. If $y$ is a point of density of $E$, then u is bounded in 
$\Gamma_{\beta}(y)$ for every $\beta>0$.
\end{lemma}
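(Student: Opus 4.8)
The plan is to reduce the statement to a purely geometric fact plus the Lebesgue density property, using no harmonicity of $u$ at all (only continuity). The observation I would exploit is this: if $x\in\Gamma_\beta(y)$ lies close to $\partial D$ and $x'\in\partial D$ is a nearest boundary point (so $|x-x'|=\delta(x)$, which exists by compactness of $\partial D$), then for \emph{any} $z\in\partial D$ with $|x'-z|<\alpha\delta(x)$ we automatically get $|x-z|\le|x-x'|+|x'-z|<(1+\alpha)\delta(x)$, i.e. $x\in\Gamma_\alpha(z)$. Hence if the surface ball $K(x',\alpha\delta(x))$ contains a point $z\in E$, then $x\in\Gamma_\alpha(z)\subset\Omega$, so $|u(x)|\le\sup_\Omega|u|$. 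Everything thus comes down to showing that $K(x',\alpha\delta(x))$ meets $E$ once $\delta(x)$ is small enough.

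For that I would use Lemma 3.3 together with the density of $y$. Write $\Lambda=2+\alpha+\beta$. Since $x\in\Gamma_\beta(y)$ forces $|x'-y|\le|x'-x|+|x-y|<(2+\beta)\delta(x)$, one has the containment $K(x',\alpha\delta(x))\subset K(y,\Lambda\delta(x))$, while Lemma 3.3 gives
\[
	\sigma\big(K(x',\alpha\delta(x))\big)\ \ge\ c_1\alpha^{N-1}\delta(x)^{N-1}\ \ge\ \theta\, c_2\Lambda^{N-1}\delta(x)^{N-1}\ \ge\ \theta\,\sigma\big(K(y,\Lambda\delta(x))\big),
\]
with $\theta=(c_1/c_2)(\alpha/\Lambda)^{N-1}>0$ depending only on $\alpha,\beta,N,D$. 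Because $y$ is a point of density of $E$, $\sigma(K(y,r)\setminus E)/\sigma(K(y,r))\to0$ as $r\to0$, so I can fix $r_0\in(0,\diam D]$ with $\sigma(K(y,r)\setminus E)<\theta\,\sigma(K(y,r))$ for all $0<r<r_0$; put $\eta=r_0/\Lambda$. For $x\in\Gamma_\beta(y)$ with $\delta(x)<\eta$ (so $\Lambda\delta(x)<r_0$), combining the inclusion with the two estimates yields
\[
	\sigma\big(K(x',\alpha\delta(x))\setminus E\big)\ \le\ \sigma\big(K(y,\Lambda\delta(x))\setminus E\big)\ <\ \theta\,\sigma\big(K(y,\Lambda\delta(x))\big)\ \le\ \sigma\big(K(x',\alpha\delta(x))\big),
\]
so $\sigma(K(x',\alpha\delta(x))\cap E)>0$ and in particular this set is nonempty; any $z$ in it puts $x\in\Gamma_\alpha(z)\subset\Omega$, whence $|u(x)|\le\sup_\Omega|u|$.

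For the complementary points, $x\in\Gamma_\beta(y)$ with $\delta(x)\ge\eta$, I would simply invoke compactness: $\{w\in\overline D:\delta(w)\ge\eta\}$ is closed and bounded, hence compact, and lies inside $D$, so $u$ is bounded there by continuity. Putting the two ranges together, $u$ is bounded on $\Gamma_\beta(y)$ by $\max\{\sup_\Omega|u|,\ \sup_{\delta\ge\eta}|u|\}$, and $\beta>0$ was arbitrary.

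The main obstacle — really the only non-formal step — is the passage from "$K(x',\alpha\delta(x))$ captures a fixed positive fraction of the surrounding surface ball $K(y,\Lambda\delta(x))$" to "$K(x',\alpha\delta(x))$ actually meets $E$"; this is exactly where the two-sided bounds of Lemma 3.3 and the density hypothesis are both needed, and one must keep the radius of the inner surface ball equal to $\alpha\delta(x)$ (not larger), so that the elementary triangle inequality really lands $x$ inside the \emph{$\alpha$}-cone over $z$. The rest — existence of the foot $x'$ and the behaviour away from the boundary — is routine.
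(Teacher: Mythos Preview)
Your argument is correct and is essentially the same as the paper's: both reduce to showing that the surface ball $K(x',\alpha\delta(x))$ around the nearest boundary point meets $E$, use the containment $K(x',\alpha\delta(x))\subset K(y,(2+\alpha+\beta)\delta(x))$, and then combine Lemma~3.3 with the density hypothesis to force a nonempty intersection. The only cosmetic difference is that the paper phrases the sufficiency as ``$\Gamma^{h}_{\beta}(y)\subset\Omega$ for some $h>0$'' (leaving the compactness step for $\delta(x)\ge h$ implicit), whereas you make that part explicit.
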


\begin{proof}
Let $y$ be a point of density of $E$, and let $\beta>0$. For $h>0$ denote

\[
	\Gamma^{h}_{\beta}(y)=\left\{x\in D:|x-y|<(1+\beta)\delta(x)\wedge\delta(x)<h\right\}.
\]
\\
It suffices to show that $\Gamma^{h}_{\beta}(y)\subset\Omega$ for some $h>0$.

Let $c_1,c_2$ be the constants from Lemma 3.3; there exists $r_1>0$ such that for each $r<r_1$ we have

\[
	\frac{\sigma(K(y,r)\cap E)}{\sigma(K(y,r))}>1-\frac{c_1}{c_2}\left(\frac{\alpha}{2+\alpha+\beta}\right)^{N-1}.
\]
\\
So let $h_1=\frac{r_1}{2+\alpha+\beta}$ and fix $x\in\Gamma^{h_1}_{\beta}(y)$. There exists $x'\in\partial D$, such that 
\[
	|x-x'|=\delta(x).
\] 
Suppose $y'\in K(x',\alpha\delta(x))$; we have

\[
	|y'-x'|<\alpha|x-x'|
\]

\[
	|y'-x|\leq |y'-x'|+|x'-x|<(1+\alpha)|x-x'|,
\]
\\
what means, that $x\in\Gamma_{\alpha}(y')$. Hence it suffices to show, that $K(x',\alpha\delta(x))\cap E$ is non-empty.

Because $x\in\Gamma^{h_1}_{\beta}(y)$, we have

\[
	|y-x'|-|x'-x|\leq |y-x|<(1+\beta)|x-x'|,
\]
\\
and thus $x'\in K(y,(2+\beta)\delta(x))$, what implies, that 
\[
	K(x',\alpha\delta(x))\subset K(y,(2+\alpha+\beta)\delta(x)).
\]
Moreover, because $\delta(x)<h_1$, we conclude

\[
	\frac{\sigma\left\{K(y,(2+\alpha+\beta)\delta(x))\cap E\right\}}{\sigma\left\{K(y,(2+\alpha+\beta)\delta(x))\right\}}
	>1-\frac{c_1}{c_2}\left(\frac{\alpha}{2+\alpha+\beta}\right)^{N-1} 
\]
\\
\[
	\geq 1-\frac{\sigma\left\{K(x',\alpha\delta(x))\right\}}{\sigma\left\{K(y,(2+\alpha+\beta)\delta(x))\right\}}
	\geq\frac{\sigma\left\{K(y,(2+\alpha+\beta)\delta(x))\backslash
	K(x',\alpha\delta(x))\right\}}{\sigma\left\{K(y,(2+\alpha+\beta)\delta(x))\right\}},
\]
\\
and hence
\[
	\sigma\left\{K(y,(2+\alpha+\beta)\delta(x))\cap E\right\}>
	\sigma\left\{K(y,(2+\alpha+\beta)\delta(x))\backslash K(x',\alpha\delta(x))\right\}.
\]
\\
Therefore $K(x',\alpha\delta(x))\cap E$ is non-empty, and thus $\Gamma^{h_1}_{\beta}(y)\subset\Omega$, as desired.
\\
\end{proof}

\begin{lemma}
Let $u$ be continuous in $D$ and suppose $E\subset\partial D$ is the set of points at which $u$ is nontangentially bounded. Then $E$ is Borel measurable, and for any $\varepsilon>0$, there exists a closed set $E_{0}\subset E$, such that $\sigma(E\backslash E_{0})<\varepsilon$ and $u$ is bounded in \[\Omega = \bigcup_{y\in E_{0}}\Gamma_{\alpha}(y)\] for every $\alpha>0$.
\end{lemma}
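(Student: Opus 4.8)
The plan is to establish the two assertions in turn: the Borel measurability of $E$, and then the inner approximation of $E$ by a closed set on which all cone-suprema of $|u|$ are finite and uniformly controlled. The only genuinely analytic input will be Lemma 6.7; everything else is measure-theoretic bookkeeping.

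\textbf{Measurability of $E$.} I would exhibit $E$ as an $F_{\sigma}$ set. For $j\in\NN$ put
\[
	E_j=\{y\in\partial D:\ |u(x)|\leq j\ \text{for every}\ x\in\Gamma_{1/j}(y)\}.
\]
Since $\Gamma_{\alpha}(y)\subset\Gamma_{\beta}(y)$ for $\alpha<\beta$, the $E_j$ increase with $j$ and $E=\bigcup_{j}E_j$: if $u$ is bounded by $C$ on some $\Gamma_{\alpha}(y)$, then $y\in E_j$ as soon as $j\geq\max\{1/\alpha,C\}$. Each $E_j$ is closed: if $y_k\in E_j$ and $y_k\to y$, then for a fixed $x\in\Gamma_{1/j}(y)$ the inequality $|x-y|<(1+1/j)\delta(x)$ is strict, hence $|x-y_k|<(1+1/j)\delta(x)$ for large $k$, so $x\in\Gamma_{1/j}(y_k)$ and $|u(x)|\leq j$; letting $x$ range over $\Gamma_{1/j}(y)$ gives $y\in E_j$. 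Thus $E$ is a countable union of closed sets, hence Borel. (This uses nothing about $u$ beyond being a function on $D$.)

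\textbf{Reduction to a closed set.} Fix $\varepsilon>0$. As the $E_j$ increase to $E$, I would pick $j_0$ with $\sigma(E\setminus E_{j_0})<\varepsilon/3$ and set $F=E_{j_0}$, a closed subset of $E$; by construction $|u|\leq j_0$ on $\Omega_0:=\bigcup_{y\in F}\Gamma_{1/j_0}(y)$, so $u$ is continuous on $D$ and bounded on $\Omega_0$. For $\beta>0$ define $v_{\beta}(y)=\sup_{x\in\Gamma_{\beta}(y)}|u(x)|\in[0,\infty]$ for $y\in\partial D$. Because $\Gamma_{\beta}(y)$ is a nonempty open set and $u$ is continuous, $v_{\beta}(y)=\sup\{|u(x)|:x\in\mathbb{Q}^{N}\cap\Gamma_{\beta}(y)\}$, and rewriting this as $\sup_{x\in\mathbb{Q}^{N}\cap D}|u(x)|\,\chi_{\{y:|x-y|<(1+\beta)\delta(x)\}}(y)$ exhibits $v_{\beta}$ as a countable supremum of Borel functions, hence Borel. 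By Lemma 6.7 applied with $F$ in place of $E$, aperture $1/j_0$, and $\Omega=\Omega_0$, the function $u$ is bounded on $\Gamma_{\beta}(y)$ for every $\beta>0$ at each point of density of $F$; since almost every point of $F$ is such a point, $v_{\beta}<\infty$ $\sigma$-a.e.\ on $F$ for each $\beta$. Hence, for every $\beta\in\NN$ one may choose $M_{\beta}<\infty$ with $\sigma(F\cap\{v_{\beta}>M_{\beta}\})<(\varepsilon/3)2^{-\beta}$. Put $A=F\setminus\bigcup_{\beta\in\NN}\{v_{\beta}>M_{\beta}\}$, so $\sigma(F\setminus A)<\varepsilon/3$, and by inner regularity of $\sigma$ pick a closed set $E_0\subset A$ with $\sigma(A\setminus E_0)<\varepsilon/3$. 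Then $E_0$ is closed, $E_0\subset E$, and $\sigma(E\setminus E_0)<\varepsilon$.

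\textbf{Conclusion.} Given any $\alpha>0$, choose an integer $\beta\geq\alpha$; then $\Gamma_{\alpha}(y)\subset\Gamma_{\beta}(y)$, so for every $y\in E_0\subset A$ we have $\sup_{\Gamma_{\alpha}(y)}|u|\leq v_{\beta}(y)\leq M_{\beta}$, whence $|u|\leq M_{\beta}$ on $\Omega=\bigcup_{y\in E_0}\Gamma_{\alpha}(y)$. This proves the lemma. The main obstacle is entirely absorbed into Lemma 6.7 — the geometric density estimate showing that, near $\partial D$, a nontangential cone over a point of density of $F$ lies inside the sawtooth region $\Omega_0$ on which $u$ was already bounded; within the present proof the only points needing care are the $F_{\sigma}$ description of $E$, the Borel measurability of the auxiliary functions $v_{\beta}$, and the reduction of the "for every $\alpha$" in the conclusion to the countably many integer apertures via monotonicity of $\Gamma_\alpha$.
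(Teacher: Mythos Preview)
Your proof is correct and follows essentially the same route as the paper: exhibit $E$ as the increasing union of the closed sets $E_j=\{y:|u|\le j\text{ on }\Gamma_{1/j}(y)\}$, invoke Lemma~6.7 to upgrade boundedness in one cone to boundedness in all cones at almost every point of a closed piece, and then intersect over countably many integer apertures to obtain $E_0$. Your write-up is in fact a bit tidier than the paper's, since you explicitly check the Borel measurability of the maximal functions $v_\beta$ and secure the closedness of $E_0$ via inner regularity, a point the paper glosses over.
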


\begin{proof}

For $k=1,2,...$, set $E_k=\{y\in\partial D:|u|\leq k$ on $\Gamma_{\frac{1}{k}}(y)\}$. Then each $E_k$ is closed, and 
\[
	E=\bigcup^{\infty}_{k=1}E_k
\] 
what means, that the set $E$ is Borel measurable. Fix $k\in\NN$; applying Lemma 6.7 to $E_k$, 
and recalling that the points of density of $E_k$ form a set of full measure in $E_k$, we see that there is a set $F_k\subset E_k$, $\sigma(E_k\backslash F_k)=0$, such that $u$ is bounded in $\Gamma_{\alpha}(y)$ for every $y\in F_k$ and every $\alpha>0$. Let
\[
	F=\bigcup^{\infty}_{k=1}F_k;
\] 
obviously $\sigma(E\backslash F)=0$. For fixed positive integer $k$, we can write $F$ as 
$F=\bigcup_j F^{k}_{j}$, where $F^{k}_{j}=\{y\in F:|u|\leq j$ on $\Gamma_{k}(y)\}$. 
Now fix $\varepsilon>0$. Because $F^{k}_{j}\subset F^{k}_{j+1}$ for each $j\in\NN$, there exists $j_k$, such that 
$\sigma(E\backslash F^{k}_{j_k})=\sigma(F\backslash F^{k}_{j_k})<\varepsilon/2^k$. Hence for each $k$, $u$ is bounded on the set
\[
	\Omega_k = \bigcup_{y\in F^{k}_{j_k}}\Gamma_{k}(y).
\]
Let
	\[E_0=\bigcap^{\infty}_{k=1}F^{k}_{j_k}.
\]
Then 

\[
	\sigma\left(E\backslash E_0\right)=\sigma\left(E\cap\left(\bigcap^{\infty}_{k=1}F^{k}_{j_k}\right)^C \right)
	=\sigma\left(\bigcup^{\infty}_{k=1}(E\backslash F^{k}_{j_k})\right)\leq\sum^{\infty}_{k=1}\sigma(E\backslash F^{k}_{j_k})
\]

\[
	<\sum^{\infty}_{k=1}\varepsilon/2^k=\varepsilon,
\]
\\
and it is easily seen, that $u$ is bounded in 

\[
	\Omega = \bigcup_{y\in E_0}\Gamma_{\alpha}(y)
\]
for every $\alpha>0$.

\end{proof}

Now we are ready to prove Theorem 6.1.
\\
\begin{proof3}
By Lemma 6.8, $E$ is Borel measurable, and for each $k\in\NN$ we can choose $E_k\subset E$, such that 
$\sigma(E\backslash E_k)<\frac{1}{k}$ and u is bounded on the set
\[
	\Omega_k = \bigcup_{y\in E_k}\Gamma_{j}(y)
\]
for every $j\in\NN$. Fix $j,k$; by Lemma 6.6, for almost every $y\in E_k$ $\lim u(x)$ exists as $x\rightarrow y$ within $\Gamma_{j}(y)$. Letting $k\rightarrow\infty$, we then get the same result for almost every $y\in E$. Now $u$ has nontangential 
limit $L$ at $y$ if and only if $\lim u(x)=L$ as $x\rightarrow y$ within $\Gamma_{j}(y)$ for every $j\in\NN$, and thus $u$ has 
nontangential limit almost everywhere on $E$, as desired.
\\
\end{proof3}

\section{The Area Theorem}
\markright{The Area Theorem}
\medskip
The question whether $u$ has nontangential limits can also be answered in terms of the "area integral". As before we assume, that $D$ is a bounded domain with the boundary of class $C^2$. For any harmonic function $u$ on $D$, 
$\alpha>0$ and a point $y\in\partial D$, we consider $S_{\alpha}u(y)$ by

\[
	S_{\alpha}u(y)=\int_{\Gamma_{\alpha}(y)}|\nabla u(x)|^{2}(\delta(x))^{2-N}dx.
\]
\\
A necessary and sufficient condition for the existence of nontangential limits, can be formulated as follows.

\begin{theorem}
Let u be harmonic on $D$.
\begin{enumerate}
\item 
	Suppose $E\subset\partial D$ is the set of points at which $u$ is nontangentially bounded. Then for a.e. $y\in E$,
	$S_{\alpha}u(y)$ is finite for every $\alpha>0$.
\item
	Conversely, suppose $E$ is the set of points $y\in\partial D$, such that $S_{\alpha}u(y)$ is finite, where $\alpha$ can 
	depend on $y$. Then $u$ has a nontangential limit at almost every point of $E$.
\end{enumerate}
\end{theorem}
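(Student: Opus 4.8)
The plan is to prove the two parts separately, using a common engine: the identity $\Delta(u^2)=2|\nabla u|^2$ for real harmonic $u$, Green's second identity with weight $\delta$ (or with a Green's function), the $C^2$-smoothness of $\delta$ near $\partial D$ from Corollary 3.1, and Lemma 3.3, which gives that the ``shadow'' $\{y\in\partial D:x\in\Gamma_\alpha(y)\}$ of a point $x\in D$ has surface measure comparable to $\delta(x)^{N-1}$. We may assume $u$ is real valued throughout.

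For part 1, fix $\varepsilon>0$ and use Lemma 6.8 to pick a closed $E_0\subset E$ with $\sigma(E\setminus E_0)<\varepsilon$ on which $u$ is bounded in $\bigcup_{y\in E_0}\Gamma_\gamma(y)$ for \emph{every} $\gamma>0$; it then suffices to show $S_\beta u(y)<\infty$ for a.e.\ $y\in E_0$ and every $\beta>0$, and to let $\varepsilon\to0$ and take a countable union over $\beta$. Fix $\beta$, choose $\gamma>\beta$, so $|u|\le M$ on $\Omega_\gamma:=\bigcup_{y\in E_0}\Gamma_\gamma(y)$. The key estimate is
\[
	\int_{\Omega'}|\nabla u(x)|^2\,\delta(x)\,dx<\infty,\qquad \Omega':=\Big(\bigcup_{y\in E_0}\Gamma_\beta(y)\Big)\cap\{\delta<\varepsilon_0\},
\]
for $\varepsilon_0$ small. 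To obtain it, smooth $\Omega'$ to a Lipschitz sawtooth contained in $\Omega_\gamma$, apply Green's second identity to $(u^2,\delta)$ on its intersection with $\{\delta>\delta_1\}$,
\[
	2\int\delta\,|\nabla u|^2\,dx=\int u^2\,\Delta\delta\,dx+\int_{\partial}\Big(\delta\,\frac{\partial(u^2)}{\partial n}-u^2\,\frac{\partial\delta}{\partial n}\Big)\,d\sigma,
\]
and let $\delta_1\to0$. Here $\Delta\delta$ is bounded near $\partial D$ (Corollary 3.1); on the face $\{\delta=\varepsilon_0\}$ the integrand is bounded because it lies in a compact subset of $D$; on the lateral faces and on $\{\delta=\delta_1\}$ one has $u^2\le M^2$ and, since these faces lie at distance $\gtrsim\delta$ from $\partial\Omega_\gamma$, the interior gradient estimate gives $|\nabla u|\lesssim M/\delta$, so $\delta\,\partial(u^2)/\partial n$ is bounded; all the relevant surface areas are finite, hence the right-hand side is bounded uniformly in $\delta_1$, and monotone convergence gives the displayed bound. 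Finally, by Tonelli and Lemma 3.3,
\[
	\int_{E_0}S_\beta u(y)\,d\sigma(y)=\int|\nabla u(x)|^2\,\delta(x)^{2-N}\,\sigma\big(\{y\in E_0:x\in\Gamma_\beta(y)\}\big)\,dx\lesssim\int_{\Omega'}|\nabla u|^2\,\delta\,dx+C<\infty,
\]
so $S_\beta u(y)<\infty$ a.e.\ on $E_0$.

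For part 2, since $\Gamma_\alpha(y)\subset\Gamma_{\alpha'}(y)$ when $\alpha<\alpha'$, write $E=\bigcup_k E_k$ with $E_k=\{y:S_{1/k}u(y)\le k\}$; it suffices to produce a nontangential limit a.e.\ on each $E_k$. Fix $k$, take a closed $F\subset E_k$, pass to its density points and (via a further standard subdivision making the density estimate uniform) to a subset $F_1$ of positive measure and an aperture $\beta<1/k$ such that, by the reverse Tonelli inequality together with Lemma 3.3 restricted to $F$ (the shadow of a point deep over a density point of $F$ meets $F$ in a set of measure $\gtrsim\delta(x)^{N-1}$),
\[
	\int_{\widetilde\Omega}|\nabla u(x)|^2\,\delta(x)\,dx<\infty,\qquad \widetilde\Omega\ \text{a Lipschitz domain with}\ \Big(\bigcup_{y\in F_1}\Gamma_\beta(y)\Big)\cap\{\delta<\varepsilon_0\}\subset\widetilde\Omega\subset D.
\]
Since $G_{\widetilde\Omega}(x_0,\cdot)\lesssim\delta$ away from $x_0$, the subharmonic function $u^2$ has finite Green energy on $\widetilde\Omega$, hence (Riesz decomposition) $u^2=h-2\,G_{\widetilde\Omega}[\,|\nabla u|^2\,]$ with $h\ge0$ harmonic on $\widetilde\Omega$. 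A positive harmonic function on a Lipschitz domain has finite nontangential limits a.e.\ on the boundary, while a Green potential of a measure of finite energy tends to $0$ nontangentially a.e.; hence $u^2$, and therefore $u$, is nontangentially bounded at a.e.\ point of $F_1\subset\partial\widetilde\Omega\cap\partial D$. The Local Fatou Theorem (Theorem 6.1) then provides a nontangential limit a.e.\ on $F_1$; exhausting $F$, then $E_k$, then taking $\bigcup_k$, finishes the proof.

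The main obstacle, in both parts, is the bookkeeping on the auxiliary sawtooth domains: one must construct them with Lipschitz boundary and with geometry uniform in the base point, arrange that the non-base boundary lies in the region where $u$ (and, via interior estimates, $\nabla u$) is controlled, and verify the Green's-identity boundary terms and, in part 2, the reverse-Fubini lower bound over density points. Part 2 additionally rests on two classical potential-theoretic facts on Lipschitz domains --- nontangential convergence of positive harmonic functions, and nontangential vanishing of Green potentials of finite-energy measures --- which will need either a self-contained treatment or precise references.
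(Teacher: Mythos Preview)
Your Part 1 is essentially the paper's argument: the same reduction via Lemma 6.8, the same Fubini--Lemma 3.3 trick converting $\int_E S_\alpha u\,d\sigma$ into $\int_\Omega \delta\,|\nabla u|^2$, and the same Green identity for the pair $(u^2,\delta)$ with $\Delta\delta$ bounded by Corollary 3.1 and $\delta\,|\nabla u|$ bounded by Lemma 7.1. The one place you are more cavalier than the paper is the phrase ``smooth $\Omega'$ to a Lipschitz sawtooth'' and ``all the relevant surface areas are finite'': this is exactly Lemma 7.2, whose proof (mollifying $\widetilde\delta=\dist(\cdot,E(a,\rho))$, checking $\partial_{x_N}\big((1+\alpha)\delta-\widetilde\delta_n\big)\ge\alpha/2$, and extracting a uniform bound on $\sigma_n(B_n)$ via a covering argument) occupies most of Section 7. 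Your exhaustion by $\{\delta>\delta_1\}$ does not by itself give domains on which Green's theorem applies with uniformly controlled boundary, so you would still need that lemma or an equivalent.

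For Part 2 the paper gives no proof; it explicitly omits the sufficiency and refers to Widman \cite{W}. Your sketch takes a different (and standard) route via the Riesz decomposition $u^2=h-2\,G_{\widetilde\Omega}[|\nabla u|^2]$ on a sawtooth domain, reducing to a.e.\ nontangential convergence of positive harmonic functions and a.e.\ nontangential vanishing of Green potentials on Lipschitz domains. That strategy is sound, with two caveats. First, what your reverse-Fubini step yields is $\int G_{\widetilde\Omega}(x_0,\cdot)\,|\nabla u|^2<\infty$, i.e.\ the potential is finite at one point; this is the correct hypothesis for Littlewood--Doob type vanishing, but it is not ``finite energy'' in the usual sense, so the terminology should be adjusted. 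Second, both black boxes you invoke on Lipschitz domains (Fatou for positive harmonic functions; nontangential vanishing of Green potentials) are genuine theorems of Hunt--Wheeden/Dahlberg/Doob and are nowhere developed in this paper, so as written your Part 2 is a reduction to results at least as deep as the one being proved. If you supply those references precisely, your argument is a legitimate alternative to the Widman approach the paper cites.
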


This theorem has been proved by Stein in \cite{S2}, in the case, when $D$ is a half-space in $\RR^N$. By the use of very similar methods, we will prove the necessity part (1) of it in the present case. Because of some technical 
difficulties, we omit the sufficiency (2). The detailed proof of this part (by the use of other techniques) may be 
found in \cite{W}, together with various  generalizations. 

We begin with some technical lemmas. Let $\alpha$ and $\beta$ be given positive quantities with $\alpha<\beta$.
\newpage
\begin{lemma}
Let $u$ be harmonic in the cone $\Gamma_{\beta}(y)$ and suppose that $|u|\leq 1$ there. Then

\[
	\delta(x)|\nabla u(x)|\leq A \quad \forall x\in\Gamma_{\alpha}(y),
\]
\\
where $A=A(\alpha,\beta)$ depends only on the indicated parameters but not on $y$ or $u$.
\end{lemma}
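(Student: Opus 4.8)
The plan is to reduce the estimate to the classical interior gradient estimate for harmonic functions on balls, applied to balls that sit comfortably inside the wider cone $\Gamma_\beta(y)$ but whose centers range over the narrower cone $\Gamma_\alpha(y)$. The interior gradient estimate states: if $v$ is harmonic on $B(x,R)$ and $|v|\le 1$ there, then $|\nabla v(x)|\le c_N/R$, with $c_N$ depending only on the dimension $N$.

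First I would show there is a constant $c=c(\alpha,\beta)\in(0,1)$ such that for every $y\in\partial D$ and every $x\in\Gamma_\alpha(y)$ one has $B(x,c\,\delta(x))\subset\Gamma_\beta(y)$. The inclusion $B(x,c\,\delta(x))\subset D$ is automatic as soon as $c<1$, since $\delta(x)=\dist(x,\partial D)$. For the cone condition, take $x'\in B(x,c\,\delta(x))$; then $\delta(x')\ge\delta(x)-|x-x'|\ge(1-c)\delta(x)$ and $|x'-y|\le|x-y|+|x-x'|<(1+\alpha)\delta(x)+c\,\delta(x)$. Hence it suffices that
\[
	(1+\alpha+c)\,\delta(x)\le(1+\beta)(1-c)\,\delta(x),
\]
i.e. $1+\alpha+c\le(1+\beta)(1-c)$, which holds for all sufficiently small $c>0$ because $\alpha<\beta$ makes the inequality strict at $c=0$. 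Fix such a $c=c(\alpha,\beta)$.

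Next I would apply the interior gradient estimate on the ball $B(x,c\,\delta(x))$, on which $u$ is harmonic and $|u|\le 1$ by the previous step and the hypothesis. This yields
\[
	|\nabla u(x)|\le\frac{c_N}{c\,\delta(x)},
\]
so that $\delta(x)|\nabla u(x)|\le c_N/c=:A(\alpha,\beta)$, which is the desired bound, and the constant depends only on $\alpha,\beta$ (and $N$), not on $y$ or $u$.

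The only mild obstacle is the very first step — producing the geometric inclusion $B(x,c\,\delta(x))\subset\Gamma_\beta(y)$ uniformly in $y$ — but this is elementary once one uses the $1$-Lipschitz property of $\delta$ established in Section 3. If one prefers to avoid quoting the interior gradient estimate as a black box, an alternative is to write $\partial u/\partial x_i$ as the Poisson integral of $u$ over the boundary sphere of $B(x,c\,\delta(x))$ (differentiating the Poisson kernel for the ball under the integral sign) and bound the resulting kernel derivative directly; this gives the same $1/(c\,\delta(x))$ decay with an explicit dimensional constant. Either route completes the proof.
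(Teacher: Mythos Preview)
Your proposal is correct and follows essentially the same approach as the paper: both arguments find a constant $c=c(\alpha,\beta)$ with $B(x,c\,\delta(x))\subset\Gamma_\beta(y)$ for every $x\in\Gamma_\alpha(y)$, and then apply the interior gradient estimate for bounded harmonic functions on that ball. Your verification of the inclusion is in fact more detailed than the paper's, which simply asserts that $c<\frac{\beta-\alpha}{1+\beta}$ suffices.
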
 

\begin{proof}
We shall need the following fact: if $u$ is harmonic in $B(0,r)$ and $|u|\leq 1$ there, then $|\nabla u(0)|\leq A/r$, where $A$ does not depend on $u$ (see \cite{ABR}). 

Let $x$ be any point in $\Gamma_{\alpha}(y)$. Notice that since $\alpha<\beta$, there exists a fixed constant 
$c>0$, which does not depend on $x$ and $y$, so that 
\[
	B(x,c\delta(x))\subset\Gamma_{\beta}(y); 
\]
it suffices to take 
\[
	c<\frac{\beta-\alpha}{1+\beta}.
\]

We now apply the previous fact to $u$ and the ball $B(x,c\delta(x))$, and obtain
\[
	|\nabla u(x)|\leq\frac{A}{c\delta(x)}.
\]
Hence
\[
	\delta(x)|\nabla u(x)|\leq A/c \quad \forall x\in \Gamma_{\alpha}(y).
\]
\\
\end{proof}

For positive constants $\alpha,h$ and $y\in\partial D$ recall the notation
 
\[
	\Gamma^{h}_{\alpha}(y)=\left\{x\in D:|x-y|<(1+\alpha)\delta(x),\delta(x)<h \right\}.
\]
\\
Let $E\subset\partial D$ be Borel measurable. For $\rho>0$ and $a\in\partial D$ let $E(a,\rho)=E\cap \overline{B}(a,\rho)$.
For positive $\alpha,h$ denote
\[
	\Omega(a,h,\rho,\alpha) = \bigcup_{y\in E(a,\rho)}\Gamma^{h}_{\alpha}(y).
\]

\newpage
\begin{lemma}
Suppose $E$ is closed subset of $\partial D$ and let $\alpha>0$. There exist positive constants $h,\rho$, such that 
for each $a\in\partial D$ we can choose a family of regions $\left\{\Omega_n\right\}^{\infty}_{n=1}$ with the following properties:
\begin{enumerate}
	\item $\overline{\Omega}_{n}\subset\Omega(a,h,\rho,\alpha)\quad$ $\forall n$ 
	\item $\Omega_{n}\subset\Omega_{n+1}\quad$ $\forall n$
	\item $\Omega(a,h,\rho,\alpha)=\bigcup\Omega_n$
	\item The boundary $B_n$ of $\ \Omega_n$ is piecewise of class $C^2$, at a positive distance from 
				$\partial D$, and there exists a constant $A>0$ such that $\sigma_n(B_n)<A$ $\forall n$.
\end{enumerate}
\end{lemma}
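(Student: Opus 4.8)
The plan is to obtain each $\Omega_n$ as a slightly shrunken and smoothed copy of $\Omega(a,h,\rho,\alpha)$, exploiting that this region is cut out by a single inequality involving the Lipschitz function $x\mapsto\dist(x,E(a,\rho))$. Fix $h>0$ once and for all, small enough that $\delta$ is of class $C^2$ on $D_h$ (Corollary 3.1) and that $\sigma(\{x\in D:\delta(x)=c\})\le 2\sigma(\partial D)$ for every $0<c\le h$; the latter holds because for such $c$ the set $\{\delta=c\}$ is the image of $\partial D$ under the $C^1$ map $y\mapsto y-c\,\nu_y$, whose Jacobian tends to $1$ as $c\to 0$ (cf.\ the bound $\sigma_\varepsilon(\partial D^\varepsilon_\lambda)\le 2\sigma(\partial D)$ of Section 4). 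Let $\rho$ be any fixed positive number. Fix $a\in\partial D$, put $E(a,\rho)=E\cap\overline B(a,\rho)$ and $d(x)=\dist(x,E(a,\rho))$; since $d$ and $\delta$ are Lipschitz with constant $1$, one checks that
\[
	\Omega(a,h,\rho,\alpha)=\{x\in D:\ d(x)<(1+\alpha)\delta(x),\ \delta(x)<h\}.
\]

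The key observation is that this sharp region already has a boundary of finite area. In the $C^1$ coordinates $(\overline x,s)$ near $\partial D$ given by $\overline x\leftrightarrow\pi(x)$, $s=\delta(x)$, i.e.\ $x=\pi(x)-s\,\nu_{\pi(x)}$ (Lemmas 3.4 and 3.6), one has $\partial_s x=-\nu_{\pi(x)}$, a unit vector, so $g_n(s):=d_n(x(\overline x,s))-(1+\alpha_n)s$ (see below) has $g_n'(s)\le|\nabla d_n|-(1+\alpha_n)\le-\alpha_n<0$; hence $\{d_n=(1+\alpha_n)\delta\}$ is a genuine graph $s=\psi_n(\overline x)$ over a subset of $\partial D$ with $|\nabla_{\overline x}\psi_n|$ bounded by a constant depending only on $\alpha$ and $D$, and so its area is at most a constant multiple of $\sigma(\partial D)$. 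The only defect of the region $\Omega(a,h,\rho,\alpha)$ is the smoothness of $d$, which I repair by mollification: let $d_n=d*\varphi_{\varepsilon_n}$ for a standard mollifier $\varphi_{\varepsilon_n}$, so $d_n\in C^\infty$, $\|d_n-d\|_\infty\le\varepsilon_n$ and $|\nabla d_n|\le\operatorname{Lip}(d)=1$. Then choose $\alpha_n\uparrow\alpha$ with $\alpha_1=\alpha/2$ (e.g.\ $\alpha_n=\alpha(1-2^{-n})$), $h_n\uparrow h$, and $\varepsilon_n\downarrow0$ small enough that $\varepsilon_n+\varepsilon_{n+1}\le(\alpha_{n+1}-\alpha_n)/n$ for all $n$ (e.g.\ $\varepsilon_n=(\alpha_{n+1}-\alpha_n)/(100n)$), and set
\[
	\Omega_n=\{x\in D:\ d_n(x)<(1+\alpha_n)\delta(x),\ \tfrac1n<\delta(x)<h_n\}
\]
(some of the first few $\Omega_n$ may be empty, which is harmless).

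Properties (1)--(3) are then elementary consequences of $\|d_n-d\|_\infty\le\varepsilon_n$, $|\nabla d_n|\le1$, the strictness of $d(x)<(1+\alpha)\delta(x)$ on $\Omega(a,h,\rho,\alpha)$, and the monotonicity of $\alpha_n,h_n,\varepsilon_n$: for (1), on $\overline\Omega_n$ one has $1/n\le\delta(x)\le h_n<h$ and $d(x)\le d_n(x)+\varepsilon_n\le(1+\alpha_n)\delta(x)+\varepsilon_n<(1+\alpha)\delta(x)$ since $\delta(x)\ge1/n$ there, so $\overline\Omega_n\subset\Omega(a,h,\rho,\alpha)$ and $\dist(\overline\Omega_n,\partial D)\ge1/n>0$; (2) follows from $d_{n+1}(x)\le d(x)+\varepsilon_{n+1}\le d_n(x)+\varepsilon_n+\varepsilon_{n+1}<(1+\alpha_n)\delta(x)+(\alpha_{n+1}-\alpha_n)\delta(x)=(1+\alpha_{n+1})\delta(x)$ for $x\in\Omega_n$, using $\delta(x)>1/n$; and (3) from $(\alpha-\alpha_n)\delta(x)+\varepsilon_n<(1+\alpha)\delta(x)-d(x)$ for $n$ large. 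For (4), $B_n$ lies in the union of the three hypersurfaces $\{\delta=1/n\}$, $\{\delta=h_n\}$ and $\{d_n-(1+\alpha_n)\delta=0\}$; the first two are regular level sets of $\delta\in C^2(D_h)$, and the third is a regular level set because $|\nabla(d_n-(1+\alpha_n)\delta)|\ge(1+\alpha_n)|\nabla\delta|-|\nabla d_n|\ge\alpha_n>0$ with $d_n-(1+\alpha_n)\delta\in C^2(D_h)$, so $B_n$ is piecewise $C^2$ and at distance $\ge1/n$ from $\partial D$ (one may moreover choose the truncation levels outside the countably many exceptional values given by Sard's theorem so that the pieces meet transversally). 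The first two pieces contribute at most $4\sigma(\partial D)$ by the choice of $h$, and the third at most a constant multiple of $\sigma(\partial D)$ by the graph estimate of the previous paragraph; hence $\sigma_n(B_n)\le A$ with $A$ depending only on $\alpha$ and $D$.

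The one delicate point — and what makes the lemma true — is the uniformity in $n$ of the area bound in (4): a priori the sawtooth boundary might acquire more and more area as it descends toward $\partial D$, but it does not, because its lateral part is globally the graph of a function whose Lipschitz constant ($\le 1/(1+\alpha_n)<1$) is controlled, over the fixed compact manifold $\partial D$; mollification is exactly the right smoothing here since it does not enlarge that constant. Everything else is the routine bookkeeping of choosing $\alpha_n,h_n,\varepsilon_n$ so that the nested inequalities are mutually compatible.
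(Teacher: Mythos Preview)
Your proposal is correct in substance and follows the same master idea as the paper --- mollify $\widetilde\delta(x)=\dist(x,E(a,\rho))$ and take sublevel sets of the mollified function against $(1+\alpha)\delta$ --- but the implementation differs in two places worth noting.  First, the paper does \emph{not} let $\rho$ be arbitrary: it fixes $(2+\alpha)h+\rho\le\min\{d_\alpha,\ c_0^{-1}\sqrt{\alpha/(1+\alpha)}\}$ so that on the whole of $\Omega(a,h,\rho,\alpha)$ the single Cartesian direction $e_N=-\nu_a$ satisfies $\partial_{x_N}\bigl((1+\alpha)\delta-\widetilde\delta_n\bigr)\ge\alpha/2$.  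The lateral boundary is then a graph over a Euclidean hyperplane, and its area is bounded via the implicit function theorem together with a Wiener covering (Lemma~5.1).  You instead work in the global tubular coordinates $(\pi(x),\delta(x))$ supplied by Lemmas~3.4--3.6, which lets you take any $\rho$ and write the lateral boundary as a Lipschitz graph over (a subset of) the curved manifold $\partial D$; this is slicker but requires a little more differential geometry (the Jacobian of $(\overline x,s)\mapsto\overline x-s\nu_{\overline x}$) than the paper's purely Cartesian computation.  Second, the paper achieves monotonicity by the additive shift $\widetilde\delta_n=f_{t_n}+2\varepsilon_n$ with fixed aperture $\alpha$, whereas you vary the aperture $\alpha_n\uparrow\alpha$ and add the floor $\delta>1/n$; both schemes work, yours at the cost of a third boundary piece $\{\delta=1/n\}$ whose area you control by the same level-set estimate used for $\{\delta=h_n\}$.

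One small correction: the explicit bound ``Lipschitz constant $\le 1/(1+\alpha_n)$'' in your final paragraph is not what the implicit function theorem gives.  From $g_n(s)=d_n(x(\overline x,s))-(1+\alpha_n)s$ one gets $|\partial_s g_n|\ge\alpha_n$ and $|\nabla_{\overline x}g_n|\le |\nabla d_n|\,|\partial_{\overline x}x|\le 1+h\,c_0$, hence $|\nabla_{\overline x}\psi_n|\le 2(1+hc_0)/\alpha$.  This is still a uniform constant depending only on $\alpha$ and $D$, so your area bound and the proof go through unchanged.
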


\begin{proof}
Recall, that for every $x,y\in\partial D$
\[
	|\nu_x-\nu_y|\leq c_0|x-y|.
\]
Let $E$ be closed subset of $\partial D$, $\alpha>0$. Choose $a\in\partial D$; without loss of generality 
we may assume that $-\nu_a=e_N=(0,...,0,1)$. Let $d_{\alpha}$ be the constant from Lemma 6.4, and let 
$\rho,h$ be positive constants, such that

\[
	(2+\alpha)h+\rho\leq\min\left\{d_{\alpha},\frac{1}{c_0}\sqrt{\frac{\alpha}{1+\alpha}}\right\}.
\]
\\
Assume additionally, that $h$ satisfies the condition of Corollary 3.1. 

Now observe, that 
\[
	\Omega(a,h,\rho,\alpha) = \bigcup_{y\in E(a,\rho)}\Gamma^{h}_{\alpha}(y)
	=\left\{x\in D:\widetilde{\delta}(x)<(1+\alpha)\delta(x),\quad \delta(x)<h\right\},
\] 
where $\widetilde{\delta}(x)=\dist(x,E(a,\rho))$. Let $\varphi_t$ be a $C^{\infty}$ "approximation to the identity". It may be constructed as follows. Take a function $\varphi$ of class $C^{\infty}$ on $\RR^N$, such that 
$\varphi\geq0$, $\varphi\equiv0$ on $\RR^N\backslash B(0,1)$ and 
\[
	\int_{\RR^{N}}\varphi(x)dx=1.
\]
For $t>0$ let $\varphi_t(x)=t^{-N}\varphi(x/t)$, and denote 
\[
	f_t(x)=\int_{\RR^N}\widetilde{\delta}(x-y)\varphi_t(y)dy.
\]
Then $f_t$ is of class $C^{\infty}$ on $\RR^N$, nonnegative and $f_t\rightarrow \widetilde{\delta}$ uniformly on each 
compact subset of $\RR^N$ as $t\rightarrow0$. Take a sequence $\varepsilon_n=1/2^n$ and choose $t_n=t(\varepsilon_n)$, such that 
\[
	|f_{t_n}(x)-\widetilde{\delta}(x)|<\varepsilon_n\quad\forall x\in\overline{D}.
\]
Let 
\[
	\widetilde{\delta}_n(x)=f_{t_n}(x)+2\varepsilon_n
\]
and denote

\[
	\Omega_n=\left\{x\in D:\widetilde{\delta}_n(x)<(1+\alpha)\delta(x),\quad\delta(x)<h-\varepsilon_n \right\}.
\]
\\
Because $\widetilde{\delta}_n(x)>\widetilde{\delta}(x)+\varepsilon_n$ on $\overline{D}$, we have 
$\overline{\Omega}_n\subset\Omega(a,h,\rho,\alpha)$. Moreover,

\[
	\widetilde{\delta}_n(x)<\widetilde{\delta}(x)+3\varepsilon_n<\widetilde{\delta}_{n-1}(x)-\varepsilon_{n-1}+
	\frac{3}{2}\varepsilon_{n-1}<\widetilde{\delta}_n(x),\quad x\in\overline{D},
\]
\\
hence $\Omega_{n-1}\subset\Omega_{n}$. Since $\widetilde{\delta}_n\stackrel{n}{\rightarrow}\widetilde{\delta}$ 
on $\overline{D}$, we have 

\[
	\Omega(a,h,\rho,\alpha)=\bigcup^{\infty}_{n=1}\Omega_n,
\]
\\
and thus the properties 1-3 are proved.

Now the boundary $B_n$ of $\Omega_n$ consists of two pieces $B^{1}_{n},B^{2}_{n}$, such that 

\[
	B^{1}_{n}=\left\{x\in D:(1+\alpha)\delta(x)-\widetilde{\delta}_n(x)=0,\quad \delta(x)\leq h-\varepsilon_n\right\},
\]

\[
	B^{2}_{n}=\left\{x\in D:(1+\alpha)\delta(x)-\widetilde{\delta}_n(x)\geq0,\quad\delta(x)=h-\varepsilon_n\right\}.
\]
\\
It is easily seen, similarly as in the case of $\delta(x)$, that the function $\widetilde{\delta}(x)$ satisfies 
the Lipschitz condition 
\[
	|\widetilde{\delta}(x)-\widetilde{\delta}(y)|\leq |x-y|,
\]
\\
since $E(a,\rho)$ is compact. Therefore

\[
	|\widetilde{\delta}_n(x)-\widetilde{\delta}_n(y)|=
	\left|\int_{\RR^N}(\widetilde{\delta}(x-z)-\widetilde{\delta}(y-z))\varphi_{t_n}(z)dz\right|
\]

\[
	\leq\int_{\RR^N}\left|\widetilde{\delta}(x-z)-\widetilde{\delta}(y-z)\right|\varphi_{t_n}(z)dz
	\leq\int_{\RR^N}|x-y|\varphi_{t_n}(z)dz=|x-y|,
\]
\\
and thus
\[
	\left|\frac{\partial \widetilde{\delta}_n}{\partial x_i}(x)\right|\leq 1,\quad i=1,...,N.
\]
\\
Because $h$ satisfies the condition of Corollary 3.1, for $x\in\Omega(a,h,\rho,\alpha)$ we have 

\[
	\left|\frac{\partial\delta}{\partial x_i}(x)\right|\leq 1,\quad i=1,...,N.
\]
\\
Moreover, by Lemma 3.5 we conclude

\[
	\frac{\partial\delta}{\partial x_N}(x)=\langle e_N,-\nu_{\pi(x)}\rangle	=\langle -\nu_a,-\nu_{\pi(x)}\rangle
	=\frac{2-|\nu_a-\nu_{\pi(x)}|^2}{2}
\]

\[
	\geq 1-\frac{c^{2}_{0}}{2}|a-\pi(x)|^2\geq 1-\frac{c^{2}_{0}}{2}(|a-x|+\delta(x))^2 
	\geq 1-\frac{c^{2}_{0}}{2}(\widetilde\delta(x)(x)+\rho+h)^2
\]

\[
	\geq 1-\frac{c^{2}_{0}}{2}((2+\alpha)h+\rho)^2\geq 1-\frac{\alpha}{2(1+\alpha)}.
\]
\\
Therefore

\[
	\frac{\partial}{\partial x_N}\left((1+\alpha)\delta(x)-\widetilde{\delta}_n(x)\right)=
	(1+\alpha)\frac{\partial\delta}{\partial x_N}(x)-\frac{\partial \widetilde{\delta}_n}{\partial x_N}(x)
\]

\[
	\geq (1+\alpha)\left(1-\frac{\alpha}{2(1+\alpha)}\right)-1=\frac{\alpha}{2},\quad\forall x\in\Omega(a,h,\rho,\alpha).
\]
\\
Hence, if we denote $F_n(x)=(1+\alpha)\delta(x)-\widetilde{\delta}_n(x)$, then for every $x\in B^{1}_{n}$, $F_n(x)=0$ and 
\[
	\frac{\partial F_n}{\partial x_N}(x)\geq\frac{\alpha}{2},\quad
	\left|\frac{\partial F_n}{\partial x_i}(x)\right|\leq\alpha+2,\quad\forall x\in\Omega(a,h,\rho,\alpha),i=1,...,N.
\]
\\
Denote $x=(\overline{x},x_N)$, where $\overline{x}\in\RR^{N-1}$ and $x_N\in\RR$. 
By implicit function theorem, for every $x\in B^{1}_{N}$ there exist $r_x>0$, balls $B_N(x,r_x)\subset\RR^N$, $B_{N-1}(\overline{x},r_x)\subset\RR^{N-1}$ and a function $g_x\colon B_{N-1}(\overline{x},r_x)\to\RR$ of class $C^1$, such that:
\begin{enumerate}
	\item $B_N(x,r_x)\subset\Omega(a,h,\rho,\alpha)$
	\item $\left\{(\overline{y},g_x(\overline{y})):\overline{y}\in B_{N-1}(\overline{x},r_x)\right\}\subset\Omega(a,h,\rho,\alpha)$
	\item $B^{1}_{n}\cap B_N(x,r_x)\subset\left\{(\overline{y},g_x(\overline{y})):\overline{y}\in B_{N-1}(\overline{x},r_x)\right\}$
	\item for every $\overline{y}\in B_{N-1}(\overline{x},r_x)$, $F_n(\overline{y},g_x(\overline{y}))=0$ and
				\[
					\frac{\partial g_x}{\partial x_i}(\overline{y})=-\frac{\frac{\partial F_n}{\partial x_i}(\overline{y},g_x(\overline{y}))}
					{\frac{\partial F_n}{\partial x_N}(\overline{y},g_x(\overline{y}))}.
				\]
\end{enumerate}
Since $B^{1}_{n}$ is compact, we may choose $r>0$, such that for every $x\in B^{1}_{n}$, the conditions 1-4 holds with $r_x=r$. 

Denote $V_n=\left\{\overline{x}:x\in B^{1}_{n}\right\}$, $V=\left\{\overline{x}:x\in\Omega(a,h,\rho,\alpha)\right\}$. 
Then $V_n$ is compact, and because of 2, we conclude
\[
	V_n\subset \bigcup_{x\in V_n}B_{N-1}(\overline{x},r/3)\subset\Omega(a,h,\rho,\alpha).
\]
\\
By Lemma 5.1, we can choose a finite family of disjoint balls $\left\{B_{N-1}(\overline{x}_j,r/3)\right\}$, such that
\[
	V_n\subset \bigcup_jB_{N-1}\left(\overline{x}_j,r\right).
\]
\\
Now observe, that if $x,x'\in\Omega(a,h,\rho,\alpha)$ and $\overline{x}=\overline{x}'$, then, by (small modification of) Lemmas 6.4 and 6.5, for every $\theta\in(0,1)$, $\theta x+(1-\theta)x'\in\Omega(a,h,\rho,\alpha)$. Since 
\[
	\frac{\partial F_n}{\partial x_N}(x)\geq\frac{\alpha}{2},\quad\forall x\in\Omega(a,h,\rho,\alpha),
\]
the projection $\Omega(a,h,\rho,\alpha)\cap\left\{x:F_n(x)=0\right\}\ni x\mapsto\overline{x}\in V$ 
is "one to one", and we have
\[
	B^{1}_{n}\subset\bigcup_{j\in J}\left\{(\overline{y},g_{x_j}(\overline{y})):\overline{y}\in B_{N-1}(\overline{x}_j,r)\right\}.
\]
Therefore
\[
	\sigma_n(B^{1}_{n}) \leq\sum_{j\in J}\int_{B_{N-1}(\overline{x}_j,r)}\sqrt{1+\sum^{N-1}_{i=1}
	\left(\frac{\partial g_{x_j}}{\partial x_i}(\overline{y})\right)^2}d\overline{y}
\]

\[
	\leq\sqrt{1+(N-1)\left(\frac{2(\alpha+2)}{\alpha}\right)^2}\sum_{j\in J}m_{N-1}(B_{N-1}(\overline{x}_j,r))
\]

\[
	=c_{\alpha}3^{N-1}\sum_{j\in J}m_{N-1}(B_{N-1}(\overline{x}_j,r/3))=c_{\alpha}3^{N-1}
	m_{N-1}\left(\bigcup_{j\in J}B_{N-1}(\overline{x}_j,r/3)\right)
\]

\[
	\leq c_{\alpha}3^{N-1}m_{N-1}(V)\leq c_{\alpha}3^{N-1}m_{N-1}(\left\{\overline{x}:x\in D\right\})<\infty,
\]
\\
where $m_{N-1}$ denotes $N-1$ dimensional Lebesque measure. Hence $\sigma_n(B^{1}_{n})$ is bounded by a positive constant, which does not depend on $n$. 

Now $B^{2}_{n}$ is a portion of the set $\left\{x\in\RR^N:\delta(x)=h-\varepsilon_n\right\}$, moreover 
\[
	|\nabla\delta(x)|\leq \sqrt{N}
\]
and
\[
	\left|\frac{\partial\delta}{\partial x_N}(x)\right|\geq 1-\frac{\alpha}{2(1+\alpha)}
\]
for every $x\in B^{2}_{N}$. Hence, by very similar arguments, $\sigma_n(B^{2}_{n})$ 
is uniformly bounded with respect to $n$.
\\
\end{proof}

\begin{proof4}
By the use of Lemma 6.8, we may assume, without loss of generality (neglecting a set of arbitrarily small measure), 
that $E$ is closed and $u$ is bounded in the region 
\[
	\Omega = \bigcup_{y\in E}\Gamma_{\alpha}(y) 
\] 
for every $\alpha>0$. We may also assume, that $u$ is real valued. 

Choose $\alpha>0$ and let $h,\rho$ be the constants from Lemma 7.1 with respect 
to $\alpha$; assume additionally, that $h$ satisfies the condition of Corollary 3.1. Fix $a\in E$; we shall prove that 
\[
	S^{h}_{\alpha}u(y)=\int_{\Gamma^{h}_{\alpha}(y)}|\nabla u(x)|^{2}(\delta(x))^{2-N}dx
\]
is finite for almost every $y\in E(a,\rho)=E\cap \overline{B}(a,\rho)$. It suffices to show that
\[
	\int_{E(a,\rho)}S^{h}_{\alpha}u(y)d\sigma(y)<\infty.
\]

Let $\chi(x,y,\alpha)$ be the characteristic function of $\Gamma^{h}_{\alpha}(y)$. That is, 
$\chi(x,y,\alpha)=1$ if $|x-y|<(1+\alpha)\delta(x)$ and $\delta(x)<h$, otherwise $\chi(x,y,\alpha)=0$.

We have

\[
	\int_{E(a,\rho)}S^{h}_{\alpha}u(y)d\sigma(y)=
	\int_{E(a,\rho)}\left(\int_{\Gamma^{h}_{\alpha}(y)}|\nabla u(x)|^{2}(\delta(x))^{2-N}dx\right)d\sigma(y)
\]

\[
	=\int_{E(a,\rho)}\left(\int_{\Omega(a,h,\rho,\alpha)}\chi(x,y,\alpha)|\nabla u(x)|^{2}(\delta(x))^{2-N}dx\right)d\sigma(y)
\]

\[
	=\int_{\Omega(a,h,\rho,\alpha)}\left(\int_{E(a,\rho)}\chi(x,y,\alpha)d\sigma(y)\right)
	|\nabla u(x)|^{2}(\delta(x))^{2-N}dx,
\]
\\
where 
\[
	\Omega(a,h,\rho,\alpha) = \bigcup_{y\in E(a,\rho)}\Gamma^{h}_{\alpha}(y).
\]
\\
However

\[
	\int_{E(a,\rho)}\chi(x,y,\alpha)d\sigma(y)\leq \int_{K(\pi(x),(2+\alpha)\delta(x))}d\sigma(y)=
	\sigma\left\{K(\pi(x),(2+\alpha)\delta(x))\right\},
\]
\\
and by Lemma 3.3, there exists a positive constant $c_{\alpha}$, such that

\[
	\sigma\left\{K(\pi(x),(2+\alpha)\delta(x))\right\}\leq c_{\alpha}\delta(x)^{N-1}.
\]
\\
Thus it suffices to show that 

\[
	\int_{\Omega(a,h,\rho,\alpha)}\delta(x)|\nabla u(x)|^{2}dx<\infty.
\]
\\
We shall transform the last integral by Green's theorem. In order to do this we shall use the approximating smooth regions $\Omega_n$ discussed in Lemma 7.2. Therefore, by the properties of $\Omega_n$, the last inequality is 
equivalent with

\[
	\int_{\Omega_n}\delta(x)|\nabla u(x)|^{2}dx\leq c<\infty,
\]
\\
where the constant $c$ is independent of $n$. Since the region $\Omega_n$ has a sufficiently smooth boundary $B_n$, we apply to it Green's theorem in the form

\[
	\int_{B_n}\left(G\frac{\partial F}{\partial\nu_n}-F\frac{\partial G}{\partial \nu_n}\right)d\sigma_n=
	\int_{\Omega_n}\left(G\Delta F-F\Delta G\right)dx.
\]
\\
Here $\partial/\partial\nu_n$ indicates the directional derivative along the outward normal to $B_n$. 

In the above formula we take $F=u^2$, and $G=\delta$. A simple calculation shows that 
$\Delta\left(u^2\right)=2|\nabla u|^2$, since $u$ is real valued and harmonic. Therefore we obtain 

\[
	2\int_{\Omega_n}\delta(x)|\nabla u(x)|^2dx=\int_{\Omega_n}u^2(x)\Delta\delta(x)dx
\]

\[
	+\int_{B_n}\left(\delta(x)\frac{\partial u^2}{\partial\nu_n}(x)-
	u^2(x)\frac{\partial \delta}{\partial\nu_n}(x)\right)d\sigma_n(x).
\]
\\

Now because $|\delta(x)-\delta(y)|\leq|x-y|$, we conclude, as in the proof of Lemma 7.2, that for $x\in\Omega(a,h,\rho,\alpha)$

\[
	\left|\frac{\partial\delta}{\partial x_k}(x)\right|\leq1,\quad k=1,...,N,
\]
\\
since $h$ satisfies the condition of Corollary 3.1. Because $\overline{\Omega}_n\subset\Omega(a,h,\rho,\alpha)$, the inequality holds for $x\in\overline{\Omega}_n$. Moreover, by Lemma 3.5, there exists a constant $M>0$ such 
that for every $x,y\in\Omega(a,h,\rho,\alpha)$ we have

\[
	\left|\frac{\partial\delta}{\partial x_k}(x)-\frac{\partial\delta}{\partial x_k}(y)\right|\leq M|x-y|,\quad k=1,...,N.
\]
\\
Therefore 

\[
	\left|\frac{\partial^2\delta}{\partial x^{2}_{k}}(x)\right|\leq M\quad\forall x\in\overline{\Omega}_n, 
	\quad k=1,...,N.
\]
\\
For $\beta>\alpha$ we have 

\[
	\overline{\Omega}_n\subset\Omega(a,h,\rho,\alpha)\subset\bigcup_{y\in E}\Gamma_{\beta}(y).
\]
\\
Hence, there exists $c_1=c_1(\beta)>0$, such that $|u|\leq c_1$ on $\overline{\Omega}_n$. Notice also that 
$\partial u^2/\partial\nu_n=2u\cdot\partial u/\partial\nu_n$. Thus

\[
	\left|\delta(x)\frac{\partial u^2}{\partial\nu_n}(x)\right|\leq 2|u(x)|\cdot\delta(x)
	\cdot\left|\frac{\partial u}{\partial\nu_n}(x)\right|\leq 2|u(x)|\cdot\delta(x)\cdot|\nabla u(x)|
\]
\\
for $x\in\overline{\Omega}_n$. By Lemma 7.1, there exists $c_2=c_2(\alpha,\beta)$, such that $\delta(x)|\nabla u(x)|\leq c_2$ on $\Omega(a,h,\rho,\alpha)$. Therefore we have

\[
	\left|\int_{\Omega_n}u^2(x)\Delta\delta(x)dx
	+\int_{B_n}\left(\delta(x)\frac{\partial u^2}{\partial\nu_n}(x)-
	u^2(x)\frac{\partial\delta}{\partial\nu_n}(x)\right)d\sigma_n(x)\right|
\]

\[
	\leq\int_{\Omega_n}|u(x)|^2|\Delta\delta(x)|dx
	+\int_{B_n}\left(\left|\delta(x)\frac{\partial u^2}{\partial\nu_n}(x)\right|
	+|u(x)|^2\left|\frac{\partial \delta}{\partial\nu_n}(x)\right|\right)d\sigma_n(x)
\]

\[
	\leq c^{2}_{1}\cdot NM\cdot m_N(\Omega_n)+2c_1c_2\sigma_n(B_n)
	+c^{2}_{1}\int_{B_n}|\nabla\delta(x)|d\sigma_n(x)
\]

\[
	\leq c^{2}_{1}\cdot NM\cdot m_N(D)+\left(2c_1c_2+Nc^{2}_{1}\right)\sigma_n(B_n),
\]
\\
where $m_N$ is $N$ dimensional Lebesque measure. By Lemma 7.2, $\sigma_n(B_n)$ is uniformly bounded, so the proof is complete.
\\
\end{proof4}

\bigskip


\markright{References}

\end{document}